\documentclass[a4paper,11pt]{amsart}
\usepackage{amssymb}
\usepackage{amscd}
\usepackage{comment}
\usepackage{amsmath,amsthm}
\usepackage[colorlinks=true]{hyperref}
\usepackage{booktabs,multirow}
\usepackage{tikz}
\usepackage{rotating}
\usetikzlibrary{patterns}
\usetikzlibrary{decorations.pathreplacing}
\usetikzlibrary{calc,through}

\allowdisplaybreaks[1]
\setlength{\parskip}{4pt}
\setlength{\oddsidemargin}{0in}
\setlength{\evensidemargin}{0in}
\setlength{\marginparwidth}{0in}
\setlength{\marginparsep}{0in}
\setlength{\marginparpush}{0in}
\setlength{\textwidth}{6.5in}

\numberwithin{figure}{section}
\numberwithin{equation}{section}

\title[Generalized Dellac configurations]
{Generalized Dellac configurations}
\author[K.~Shigechi]{Keiichi~Shigechi}
\email{k1.shigechi AT gmail.com}
\date{\today}

\newcommand\tikzpic[2]{
\raisebox{#1\totalheight}{
\begin{tikzpicture}
#2
\end{tikzpicture}
}}
\newtheorem{theorem}[figure]{Theorem}
\newtheorem{example}[figure]{Example}
\newtheorem{lemma}[figure]{Lemma}
\newtheorem{defn}[figure]{Definition}
\newtheorem{prop}[figure]{Proposition}
\newtheorem{cor}[figure]{Corollary}

\newtheorem{remark}[figure]{Remark}
\begin{document}
\begin{abstract}
We study combinatorics of two generalizations of Dellac configurations.
First, we establish a correspondence between a generalized Dellac configuration with three
parameters and a generalized Dumont permutations.
Secondly, by relaxing conditions on Dellac configurations, we introduce a generalization which 
we call Dellac configurations with boundaries.
We show several recurrence relations for the Poincar\'e polynomials of Dellac configurations 
with boundaries.
\end{abstract}

\maketitle

\section{Introduction}
A Dellac configuration \cite{Del00} is a configuration of dots with $n$ column and $2n$ rows 
satisfying the following conditions: first, each row contains one dot, and 
each column contains two dots, and second, if there is a dot in 
the $i$-th column and the $j$-th row, then $i\le j\le i+n$. 
The number of such configurations is given by the median 
Genocchi numbers \cite{BarDum79,Fei11,Fei12b}. 
These numbers have been studied  extensively in the context of permutations, the Seidel triangles and 
continued fractions \cite{Big18,Dum74,DumZen94,ZenZho06}.
In this paper, we study combinatorial aspects of two generalizations of Dellac 
configurations.
First, given a triplet $(l,m,n)$ with $l\ge 1$, $m\ge2$ and $n\ge1$, 
we define a generalization of Dellac configurations in a rectangle 
with $ln$ columns and $mn$ rows such that 
each row contains $l$ dots, each column contains $m$ dots, and 
a certain condition on the position of dots (see Section \ref{sec:GDC} for details).
We denote by $\mathrm{DC}_{l,m,n}$ the set of such Dellac configurations.
In case of $(l,m,n)=(1,2,n)$, Dumont permuations \cite{DumRan94,Kre97} play a central role 
to study combinatorial properties of Dellac configurations. 
In \cite{HZ99a,HZ99b}, Han and Zeng introduced the statistics $\mathrm{st}(\sigma)$ 
for a Dumont permutation $\sigma$.
In \cite{Fei11,Fei12a,Fei12b}, Feigin studied the degenerate flag varieties and compute the Poincar\'e 
polynomials by use of statistics on the Dellac configurations.
This statistics can be regarded as a generalization of the inversion for permutations, 
and called the inversion of a Dellac configuration.
In \cite{Big14}, Bigeni gave a combinatorial interpretation of $\mathrm{st}(\sigma)$
by showing a bijection between Dellac configurations and normalized Dumont permutations.
As in the case of $(l,m)=(1,2)$, we have a correspondence between a generalized 
Dellac configuration in $\mathrm{DC}_{l,m,n}$ and a generalized Dumont permutation.

We first introduce a generalization of Dumont permutations, and establish 
a correspondence between a configuration in $\mathrm{DC}_{l,m,n}$ and 
a generalized normalized Dumont permutation.
For this purpose, we introduce a condition on multi-permutations which 
we call parity property with two parameters $(l,m)$.
This parity property is a generalization of conditions on Dumont permutations.  
The results on Dellac configurations and normalized Dumont permutations 
are naturally generalized by introducing the statistics $\mathrm{st}(\sigma)$ 
of a multi-permutation $\sigma$. 
We remark that there are other generalizations of Dellac configurations 
such as symmetric Dellac configurations \cite{Big17,BigFei19,BigFei20} corresponding to the symplectic 
degenerate flag varieties \cite{FanFou15,FeiFinLit14}.

We also give a formula to compute the inversions by using a permutation obtained 
from generalized normalized Dumont permutation (see Theorem \ref{thrm:Dyckinv}). 
Feigin introduced a combinatorial model to study normalized Dumont permutation
of the second kind \cite{Fei11}. This model is expressed by tuples $I^{1},\ldots,I^{n-1}$ with 
a certain condition (see Proposition 3.1 in \cite{Fei11}).
We give two generalizations of this combinatorial model: one is a collection of 
tuples with repeated entries (Definition \ref{defn:I}), and the other is tuples without 
repeated entries (Definition \ref{defn:K}). 

In a Dellac configuration, the dot in the coordinate $(i,j)$ satisfies 
the condition $i\le j\le i+n$. 
This condition indicates that there are regions where dots are not 
allowed to be placed. The regions are parameterized by partitions of 
staircase shape.
We relax this condition by changing the staircase partitions to 
general partitions inside the staircase.
To define a Dellac configuration, we need to have two partitions.
We call a configuration with two partitions a Dellac configuration with boundaries.
The configurations with boundaries naturally appear in the computation
of the Poincar\'e polynomials, which we call partition functions 
in Section \ref{sec:DCgb}.
We show that the partition functions satisfy several simple recurrence 
relations in three terms.
These relations are used to express a partition function in terms of 
partition functions characterized by $\delta_{n}\setminus\{i\}$ where 
$\delta_{n}$ is a staircase partition.

The paper is organized as follows.
In Section \ref{sec:GD}, we introduce a generalized 
Dumont permutation $\sigma$ and its statistics $\mathrm{st}(\sigma)$.
In Section \ref{sec:GDC}, generalized Dellac configurations are defined
for a triplet $(l,m,n)$. We also show some fundamental properties of 
them.
In Section \ref{sec:GDCnDp}, we connect the two notions in the 
previous sections: normalized Dumont permutations and generalized 
Dellac configurations. 	
We construct a bijection preserving the inversion number.
In Section \ref{sec:phiDyck}, we study the word $\phi(C)$ obtained from a 
Dellac configuration $C$, and the relation between $\phi(C)$ and 
inversions of $C$ by using Dyck paths.
In Section \ref{sec:embed}, we introduce two maps to embed a generalized 
Dellac configuration with $(l,m,n)$ in the set of Dellac configurations 
with $(l,m,n)=(1,2,n')$.
We propose two combinatorial models bijective to a generalized Dellac 
configuration in Sections \ref{sec:tuple1} and \ref{sec:tuple2}.
In Section \ref{sec:DCgb}, we study Dellac configurations with 
general boundaries.
We show several recurrence relations for them.

\paragraph{Notation}
The function $\lceil x\rceil$ is the ceiling function, 
{\it i.e.}, $\lceil x\rceil:=\min\{y\in\mathbb{Z}:x\le y\}$.
The function $\lfloor x\rfloor$ is the floor function,
{\it i.e.}, $\lfloor x\rfloor:=\max\{y\in\mathbb{Z}: x\ge y\}$.
We denote binomial coefficients by 
$\genfrac{[}{]}{0pt}{}{n}{m}:=\genfrac{}{}{1pt}{}{n!}{m!(n-m)!}$.
The $q$-integer is denoted by $[n]_q:=\sum_{1\le i\le n}q^{i-1}$, 
and $q$-binomial coefficients by 
$\genfrac{[}{]}{0pt}{}{n}{m}_q:=\genfrac{}{}{1pt}{}{[n]_q!}{[m]_q![n-m]_q!}$.

\section{Generalized Dumont permutations}
\label{sec:GD}
We denote by $\mathfrak{S}_{n}$ the set of permutations of the 
set $[n]:=\{1,2,\ldots,n\}$. 
A generalized permutation of order $L:=nl$ is a word $w:=w_{1}w_{2}\ldots w_{L}$
such that an integer $i\in[n]$ appears exactly $l$ times in $w$.
We denote by $\mathfrak{S}_{L}^{l}$ the set of generalized permutations.
Especially, when $l=1$, $\mathfrak{S}_{L}^{1}$ is nothing but the set of permutations 
of order $L$.

The standardization $\mathrm{Std}:\mathfrak{S}_{L}^{l}\rightarrow\mathfrak{S}_{L}$, 
$\sigma^{l}\mapsto\sigma$ is defined as follows.
We replace $l$ ones in $\sigma^{l}$ with $1,2,\ldots,l$ from left to right, 
replace $l$ twos with $l+1,\ldots,2l$ and in general replace $l$ $i$'s with 
$(i-1)l+1,\ldots,il$.
Similarly, the destandardization 
$\mathrm{dStd}^{l}:\mathfrak{S}_{L}\rightarrow\mathfrak{S}_{L}^{l}$, 
$\sigma\mapsto\sigma^{l}$ is defined by 
$\sigma^{l}_{i}=\lceil\sigma_{i}/l\rceil$ for $1\le i\le L$.
For example, let $\sigma:=221313\in\mathfrak{S}_{6}^{2}$. 
Then, we have $\mathrm{Std}(\sigma)=341526\in\mathfrak{S}_{6}$.
Note that the map $\mathrm{dStd}^{l}$ is not injective. 
For example, $\mathrm{dStd}^2(1324)=\mathrm{dStd}^2(2314)=1212$.

Let $w:=w_{1}\ldots w_{n}$ be a word with $n$ letters in 
the alphabet $\mathbb{N}$.
The number of inversions $\mathrm{inv}(w)$ is the number 
of pairs $(i,j)$ such that $i<j$ and $w_{i}>w_{j}$.
When $w$ is a permutation, $\mathrm{inv}(w)$ coincides with 
the standard definition of an inversion.

Given two words $u$ and $v$, we denote by $u\ast v$ the 
concatenation of two words.
More precisely, if $u=u_1\ldots u_{n}$ and $v=v_1\ldots v_{m}$,
we define $u\ast v:=u_1\ldots u_nv_1\ldots v_m$.

We start with the definition of a Dumont permutation \cite{DumRan94}.
\begin{defn}
A Dumont permutation of order $2n$ is a permutation $\sigma\in\mathfrak{S}_{2n}$ 
such that $\sigma(2i)<2i$ and $\sigma(2i-1)>2i-1$ for all $1\le i\le n$.
We denote by $\mathfrak{D}_{2n}$ the set of Dumont permutations.
\end{defn}

For $l\ge2$, we define generalized Dumont permutations as follows.
\begin{defn}
Let $L=2nl$ and $p$ and $q$ be integers such that 
$0\le p\le 2n-1$ and $1\le q\le l$. 
A generalized Dumont permutation of order $L$ is a generalized permutation 
$\sigma\in\mathfrak{S}_{L}^{l}$ such that 
$\sigma(pl+q)>p+1$ for $p$ even and $\sigma(pl+q)<p+1$ for $p$ odd.
We denote by $\mathfrak{D}_{L}^{l}$ the set of these generalized Dumont  permutations.
\end{defn}
By definitions, Dumont permutations are a generalized Dumont permutations for $l=1$.

We define a generalization of a normalized Dumont permutation for a triplet 
$(l,m,n)$.
Given a triplet $(l,m,n)$, we define $L:=l(nm+2(m-1))$ for $mn$ even and 
$L:=l(mn+2(m-1)-1)$ for $mn$ odd.

\begin{defn}
\label{defn:ppm}
Let $\sigma:=\sigma_{1}\sigma_{2}\ldots\sigma_{m}$ be a word of $m$ letters 
and $\rho:=\rho_{1}\rho_{2}\ldots\rho_{m}$ be the parities of $\sigma$ with 
respect to $l$, 
{\it i.e.}, $\rho_{i}\equiv \lceil \sigma_{i}/l \rceil \pmod2$ 
A word $\sigma$ is said to satisfy the {\it parity property} of type $(l,m)$ 
if $\sigma$ satisfies
\begin{itemize}
\item $\rho$ does not contain a decreasing subsequence.
\item if $\rho\in\{0\}^{m}$ or $\rho\in\{1\}^{m}$, 
then $\sigma$ is an increasing sequence.
\item if $\rho_{i}=0$ for $1\le i\le k$ and 
$\rho_{j}=1$ for $k+1\le j\le m$, then
\begin{eqnarray*} 
\sigma_{k+1}<\sigma_{k+2}<\ldots<\sigma_{m}<\sigma_{1}<\ldots<\sigma_{k}.
\end{eqnarray*}
\end{itemize}
\end{defn}
In Definition \ref{defn:ppm}, the first condition implies that 
we have no subsequence $(\rho_i,\rho_{i+1})=(1,0)$ for some $i$ in $\rho$.

\begin{defn}
\label{def:NDP}
A {\it normalized Dumont permutation} of order $L$ of type $(l,m)$ is 
a generalized Dumont permutation $\sigma\in\mathfrak{D}_{L}^{l}$ such 
that 
\begin{itemize}
\item $\sigma(pl+q)=(p+1)/2$ for odd $p$ satisfying $1\le p\le n(m-2)+1$ and $1\le q\le l$.  
\item We have two cases according to the parity of $mn$:
\begin{itemize}
\item For $mn$ even, $\sigma(pl+q)=(L/l+p)/2+1$ for even $p$ 
satisfying $L/l-n(m-2)-2\le p\le L/l-2$ and $1\le q\le l$.
\item For $mn$ odd, $\sigma(pl+q)=(L/l+p)/2+1$ for even $p$ 
satisfying $L/l-n(m-2)+1\le p\le L/l-2$ and $1\le q\le l$.
\end{itemize}
\item 
Let $q:=n(m-2)/2+1$ for $mn$ even and $q:=n(m-2)/2+1/2$ for $mn$ odd.
Let $\pi\in\mathfrak{S}_{L}$ be a permutation of order $L$ and $\pi'_{p}$
be a sequence of integers 
\begin{align*}
\pi'_{p}:=(\pi^{-1}(lq+pm+1),\pi^{-1}(lq+pm+2),\ldots,\pi^{-1}(lq+(p+1)m)),
\end{align*}
for $p\in\{0,1,\ldots, nl-1\}$ satisfying the parity property of type $(l,m)$.
Further, the subsequences
\begin{align*}
\pi((p-1)l+1), \pi((p-1)l+2), \ldots, \pi(pl),  
\end{align*}
for $1\le p\le L/l$ are weakly increasing.

Then, $\sigma$ is written by $\sigma=\mathrm{dStd}^{l}(\pi)$ for some $\pi$.
We say that $\pi$ satisfy the parity property of type $(l,m)$ if and only if 
all $\pi'_{p}$'s satisfy the parity property of type $(l,m)$.
\end{itemize}
We denote by $\mathfrak{D'}_{L}^{l,m}$ the set of normalized Dumont permutations 
of order $L$ and type $(l,m)$.
\end{defn}

\begin{remark}
Two remarks are in order.
\begin{enumerate}
\item
When $(l,m)=(1,2)$, three conditions in Definition \ref{def:NDP} can be 
reduced to the following single condition: 
$\sigma^{-1}(2j)$ and $\sigma^{-1}(2j+1)$ have the same 
parity iff $\sigma^{-1}(2j)<\sigma^{-1}(2j+1)$.
This is nothing but the defining relations of normalized 
Dumont permutations in \cite{Big14}.
\item
The third condition in Definition \ref{def:NDP} implies that 
a generalized normalized Dumont permutation of type $(l,m)$ 
can be regarded as a set of $ln$ generalized normalized Dumont 
permutation of type $(1,m)$ and combined to be a generalized 
Dumont permutation.
\end{enumerate}
\end{remark}

The following proposition is clear from the construction of $\pi$ in 
Definition \ref{def:NDP}.
\begin{prop}
\label{prop:uniNDP}
Let $\nu$ and $\nu'$ be permutations in $\mathfrak{S}_{L}$ satisfying the 
third condition in Definition \ref{def:NDP}.
Suppose $\sigma, \sigma'\in\mathfrak{D}_{L}^{l}$ satisfy the first two 
conditions in Definition \ref{def:NDP},
and $\sigma:=\mathrm{dStd}^l(\nu)$ and $\sigma':=\mathrm{dStd}^l(\nu')$.
Then, the map $\mathrm{dStd}^l$ is injective, {\it i.e.}, $\nu\neq\nu'\Rightarrow\sigma\neq\sigma'$.
\end{prop}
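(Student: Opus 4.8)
The plan is to prove the contrapositive of the displayed implication: assuming $\sigma:=\mathrm{dStd}^{l}(\nu)=\mathrm{dStd}^{l}(\nu')$ with both $\nu,\nu'$ satisfying the third condition of Definition \ref{def:NDP}, I would show that the preimage of $\sigma$ under $\mathrm{dStd}^{l}$ compatible with that condition is unique, whence $\nu=\nu'$. The first reduction is immediate: $\mathrm{dStd}^{l}(\nu)=\mathrm{dStd}^{l}(\nu')$ forces $\lceil\nu_{i}/l\rceil=\lceil\nu'_{i}/l\rceil=\sigma_{i}$ for every $i$, so $\nu_{i}$ and $\nu'_{i}$ lie in the same block of values $\{(\sigma_{i}-1)l+1,\ldots,\sigma_{i}l\}$. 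It therefore suffices to prove that, within each such value block, the assignment of its $l$ values to the $l$ positions carrying the common symbol $\sigma_{i}$ is determined by $\sigma$ alone, i.e.\ that the construction of $\pi$ in Definition \ref{def:NDP} leaves no genuine choice.

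First I would exploit the weakly increasing requirement on the subsequences $\pi((p-1)l+1),\ldots,\pi(pl)$. As $\pi$ is a permutation, ``weakly increasing'' means strictly increasing, so inside each position block of length $l$ the values of $\pi$ are sorted. Consequently, whenever two positions sharing a symbol $k$ of $\sigma$ lie in a common position block, the smaller element of $\{(k-1)l+1,\ldots,kl\}$ is forced to the earlier position; this fixes $\pi$ entirely inside any block of constant symbol and removes all ambiguity for same-block pairs.

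The residual freedom concerns positions that share a symbol of $\sigma$ but sit in distinct position blocks, and here the parity property of Definition \ref{defn:ppm} is decisive. For a window $\pi'_{p}=(\pi^{-1}(lq+pm+1),\ldots,\pi^{-1}(lq+(p+1)m))$ the parity $\rho_{i}\equiv\lceil (\pi'_{p})_{i}/l\rceil\pmod 2$ records the parity of the position block housing the $i$-th value of the window. The condition that $\rho$ have no decreasing subsequence, i.e.\ the shape $0^{a}1^{b}$, together with the prescribed total order $\sigma_{k+1}<\cdots<\sigma_{m}<\sigma_{1}<\cdots<\sigma_{k}$, singles out exactly one admissible arrangement of the entries of $\pi'_{p}$: a second arrangement of the same underlying positions produces a different parity word, and at most one of the two can be nondecreasing and respect the imposed order. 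This pins down $\pi^{-1}$ on every window of $m$ consecutive values, hence $\pi$ itself.

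The main obstacle is precisely this last step. Unlike the monotonicity condition, the parity $\rho_{i}$ depends on the position $\pi^{-1}(\cdot)$ one is trying to locate, so the parity property cannot be read off $\sigma$ directly but must be shown to select a single preimage among those already compatible with the block-increasing condition. The crux is therefore to verify that any two distinct monotonicity-admissible assignments within a value block necessarily produce, for some window $\pi'_{p}$, parity words that cannot both avoid a decreasing subsequence while honoring the prescribed order; this contradiction forces uniqueness. The smallest nontrivial case $(l,m,n)=(2,2,1)$ already displays the mechanism cleanly: monotonicity fixes the values in the constant-symbol blocks, after which the parity property alone discriminates between the two ways of distributing the remaining value blocks across the free positions. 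Once this uniqueness is in hand, the contrapositive is complete and $\mathrm{dStd}^{l}$ is injective on the stated domain.
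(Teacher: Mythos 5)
Your overall framework is sound, and it is in fact more explicit than the paper's own treatment: the paper offers no argument at all, merely asserting that the proposition ``is clear from the construction of $\pi$ in Definition \ref{def:NDP}.'' Your first two steps are correct. Equality of destandardizations forces $\nu_{i}$ and $\nu'_{i}$ into the same value block $\{(\sigma_{i}-1)l+1,\ldots,\sigma_{i}l\}$ for every $i$; and since a weakly increasing sequence of distinct integers is strictly increasing, the monotonicity requirement on each position block $\{(p-1)l+1,\ldots,pl\}$ fixes the assignment completely whenever all $l$ occurrences of a symbol lie in a single position block, which (via the first two conditions of Definition \ref{def:NDP}) disposes of the value blocks coming from the prefix and suffix $w_{1}$, $w_{2}$. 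Your per-window observation is also correct: \emph{given} the set of entries of a window $\pi'_{p}$, the parity property of Definition \ref{defn:ppm} admits at most one arrangement, namely the parity-$0$ entries sorted increasingly followed by the parity-$1$ entries sorted increasingly, with every parity-$1$ entry smaller than every parity-$0$ entry.

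The problem is that the proof stops exactly where it becomes nontrivial, and you say so yourself (``the main obstacle,'' ``the crux is therefore to verify\ldots''). When a value block $\{(k-1)l+1,\ldots,kl\}$ straddles two windows, the subsets of $P_{k}:=\{i:\sigma_{i}=k\}$ landing in each window are themselves part of the unknown assignment; hence the per-window uniqueness does not apply, because two globally distinct, monotonicity-admissible assignments can produce \emph{different} entry sets for the same window, and one must rule out that both choices of entry sets separately admit parity-compatible arrangements. That exclusion is the actual content of the proposition, and no argument for it appears in your proposal. The evidence you cite, $(l,m,n)=(2,2,1)$, cannot substitute for it: there the window size equals the block size, windows and value blocks align perfectly, no straddling occurs, and $\mathrm{DC}_{2,2,1}$ consists of a single configuration, so the mechanism you need is invisible. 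The smallest place it actually appears is a case like the paper's Example \ref{ex:NDP} with $(l,m,n)=(2,3,3)$, where the block $\{7,8\}$ is split between the windows $\{5,6,7\}$ and $\{8,9,10\}$; one can check there that exchanging the two positions of that block destroys the parity property in at least one of the two windows, and a complete proof would have to show this in general, for instance by induction on the smallest value at which two admissible preimages of $\sigma$ differ. As written, your proposal is a correct reduction together with an honest statement of what remains to be proven, not a proof of the proposition.
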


\begin{example}
\label{ex:NDP}
Let $(l,m,n)=(2,3,3)$.
Then, a generalized Dumont permutation 
\begin{align*}
\sigma:=(5,8,1,1,7,10,2,2,4,8,3,4,9,11,5,7,10,11,6,9,12,12,3,6)
\end{align*}
is a normalized Dumont permutation of order $L=24$ of type $(2,3)$.
It is obvious that $\sigma$ satisfies the first two conditions in 
Definition \ref{def:NDP}.
Further, $\sigma=\mathrm{dStd}^{2}(\pi)$, where 
\begin{align*}
\pi:=(10,15,1,2,13,20,3,4,7,16,5,8,18,21,9,14,19,22,11,17,23,24,6,12).
\end{align*}
By straightforward computations, all $\pi'_{p}$'s, $0\le p\le 5$, satisfy 
the parity property of type $(2,3)$.
\end{example}

Given a normalized Dumont permutation $\sigma\in\mathfrak{D'}_{L}^{l,m}$ 
of type $(l,m)$, we define the following statistics.
Let $\pi$ be the permutation satisfying $\sigma=\mathrm{dStd}^l(\pi)$.
The permutation $\pi$ satisfies the third condition in Definition \ref{def:NDP}.
Note that from Proposition \ref{prop:uniNDP}, $\pi$ is unique if $\sigma$ given.
We define a statistic $\mathrm{st}(\sigma)$ following \cite{HZ99a,HZ99b}.  
\begin{defn}
For all $\sigma\in\mathfrak{S}_{L}^{l}$, we define $\mathrm{st}(\sigma)$
through $\pi$ as the number 
\begin{align}
\label{eqn:defst}
\mathrm{st}(\sigma):=
L^2/4-\sum_{p:odd}\sum_{1\le q\le l}\pi(pl+q)-
\mathrm{inv}(\pi^{o})-\mathrm{inv}(\pi^{e})
\end{align}
where $\pi^{o}$ and $\pi^{e}$ are the two partial words of $\pi$. 
They are 
$\pi^{o}:=\{\pi(pl+q): p \text{ is even and } 1\le q\le l\}$ 
and $\pi^{e}:=\{\pi(pl+q): p \text{ is odd and } 1\le q\le l\}$ 
respectively.
\end{defn}

\begin{example}
\label{ex:NDP2}
We consider the same normalized Dumont permutation as in Example \ref{ex:NDP}.
We have 
\begin{align*}
\pi^{e}&=(1,2,3,4,5,8,9,14,11,17,6,12), \\
\pi^{o}&=(10,15,13,20,7,16,18,21,19,22,23,24).
\end{align*}
Then, we have 
\begin{align*}
\sum_{p:odd}\sum_{1\le q\le l}\pi(pl+q)&=\sum_{i}\pi^{e}_{i}=92, \\
\mathrm{inv}(\pi^{o})&=9,\\
\mathrm{inv}(\pi^{e})&=8.
\end{align*}
The statistics $\mathrm{st}(\sigma)$ is given by
\begin{align*}
\mathrm{st}(\sigma)=144-92-9-8=35.
\end{align*}
\end{example}

\section{Generalized Dellac configuration}
\label{sec:GDC}
\subsection{Generalized Dellac configuration}
Let $l\ge1,m\ge2$ and $n\ge1$ be positive integers.
We define a generalization of Dellac configurations with 
a triplet $(l,m,n)$.

\begin{defn}
\label{defn:GDC}
A {\it generalized Dellac configuration} of size $n$ with type 
$(l,m)$ is a tableau $D$ of width $ln$ and height $mn$ which 
contains $lmn$ dots such that 
\begin{itemize}
\item each row contains exactly $l$ dots;
\item each column contains exactly $m$ dots;
\item if there is a dot in the box $(i,j)$ of $D$, 
then $\lceil j/l\rceil\le i\le \lceil j/l\rceil+(m-1)n$.
\end{itemize}
Here, a box $(i,j)$ of $D$ is the one in the $i$-th row 
from bottom to top and $j$-th column from left to right.
\end{defn}

The set of the generalized Dellac configurations of size $n$
with type $(l,m)$ is denoted by $\mathrm{DC}_{l,m,n}$.
When the type $(l,m)$ is obvious from the context, we 
abbreviate $\mathrm{DC}_{n}:=\mathrm{DC}_{l,m,n}$.
A Dellac configuration with type $(1,2)$ is nothing 
but the original definition of a Dellac configuration
studied in \cite{Del00}.

Let $C\in\mathrm{DC}_{n}$ and $d_{k}$, $k=1,2$ be 
dots in $C$ whose Cartesian coordinates in $C$ 
are $(i_k,j_k)$.
An {\it inversion} of $C$ is a pair $(d_1,d_2)$ of
dots such that $i_1<i_2$ and $j_1>j_2$.
We denote by $\mathrm{inv}(C)$ the number of inversions 
of $C$.
Given a dot $d_1$ with Cartesian coordinates $(i_1,j_1)$,
we denote by $l_C(d_1)$ (resp. $r_{C}(d_1)$) the number 
of inversions of $C$ between the dot $d_1$ and another 
dot $d_2$ with $i_2>i_1$ (resp. $i_1>i_2$).

\begin{example}
A Dellac configuration $C$ with $(l,m,n)=(2,2,3)$ is depicted in 
Figure \ref{ex:GDC}.
The number of inversions is $\mathrm{inv}(C)=9$.
\begin{figure}[ht]
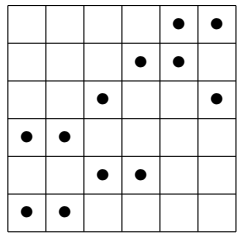

\tikzpic{-0.5}{
\draw(0,0)--(0,6/2)(1/2,0)--(1/2,6/2)(1,0)--(1,6/2)(3/2,0)--(3/2,6/2)
(2,0)--(2,6/2)(5/2,0)--(5/2,6/2)(3,0)--(3,6/2);
\draw(0,0)--(6/2,0)(0,1/2)--(6/2,1/2)(0,1)--(6/2,1)(0,3/2)--(6/2,3/2)
(0,2)--(6/2,2)(0,5/2)--(6/2,5/2)(0,3)--(6/2,3);
\draw(1/4,1/4)node{$\bullet$}(1/4+1/2,1/4)node{$\bullet$}
(1/4+1,3/4)node{$\bullet$}(1/4+1/2+1,3/4)node{$\bullet$}
(1/4,1/4+1)node{$\bullet$}(1/4+1/2,1/4+1)node{$\bullet$}
(1/4+1,3/4+1)node{$\bullet$}(1/4+1/2+2,3/4+1)node{$\bullet$}
(1/4+3/2,3/4+3/2)node{$\bullet$}(1/4+1/2+3/2,3/4+3/2)node{$\bullet$}
(1/4+2,3/4+2)node{$\bullet$}(1/4+1/2+2,3/4+2)node{$\bullet$};
}
\caption{A generalized Dellac configuration of size $3$ with type $(2,2)$.}
\label{ex:GDC}
\end{figure}
\end{example}

\paragraph{\bf Lowest and highest configurations}
Given a triplet $(l,m,n)$, we denote by $C_{0}:=C_{0}(l,m,n)$ the generalized Dellac
configuration of size $n$ defined as follows.
When $j=pl+q$ with $0\le p\le n-1$ and $1\le q\le l$, the boxes $(i,j)$ in $C_{0}$ 
with $pm+1\le i\le (p+1)m$ contain dots.
The configuration $C_{0}$ contains 
$n\genfrac{(}{)}{0pt}{}{m}{2}\genfrac{(}{)}{0pt}{}{l}{2}$ inversions.

Similarly, we denote by $C_{1}:=C_{1}(l,m,n)$ the generalized Dellac configuration
of size $n$ defined as follows.
Let $1\le j\le nl$.
When $1\le i\le n$, the box $(i,j)$ with $i=\lceil j/l\rceil$ has a dot.
When $(m-1)n+1\le i\le mn$, the box $(i,j)$ with $i=(m-1)n+\lceil j/l\rceil$ 
has a dot. 
When $n+k(m-2)+1\le i\le n+(k+1)(m-2)$ with $0\le k\le n-1$, 
the box $(i,j)$ with $\lceil j/l\rceil=n-k$ contains a dot.
The number of inversions for the configuration $C_1$ is  
\begin{align}
\label{eqn:invhighest}
\mathrm{inv}(C_{1})
=\genfrac{(}{)}{0pt}{}{nl}{2}(m-1)
+l^{2}\genfrac{(}{)}{0pt}{}{n}{2}(m-1)(m-2)
+n\genfrac{(}{)}{0pt}{}{l}{2}\genfrac{(}{)}{0pt}{}{m-1}{2}.
\end{align}

We call $C_{0}$ (resp. $C_{1}$) the lowest (resp. highest) configuration.
\begin{example}
The lowest and highest configurations in $\mathrm{DC}_{2,3,2}$ are depicted 
in Figure \ref{fig:lowhigh}.
The number of inversions are $\mathrm{inv}(C_{0})=6$ and $\mathrm{inv}(C_{1})=22$.
\begin{figure}[ht]
\tikzpic{-0.5}{
\draw(0,0)--(0,6/2)(1/2,0)--(1/2,6/2)(1,0)--(1,6/2)(3/2,0)--(3/2,6/2)(2,0)--(2,6/2);
\draw(0,0)--(4/2,0)(0,1/2)--(4/2,1/2)(0,1)--(4/2,1)(0,3/2)--(4/2,3/2)(0,2)--(4/2,2)
(0,5/2)--(4/2,5/2)(0,3)--(4/2,3);
\draw(1/4,1/4)node{$\bullet$}(1/4+1/2,1/4)node{$\bullet$}
(1/4,1/4+1/2)node{$\bullet$}(1/4+1/2,1/4+1/2)node{$\bullet$}
(1/4,1/4+1)node{$\bullet$}(1/4+1/2,1/4+1)node{$\bullet$}
(1/4+1,1/4+3/2)node{$\bullet$}(1/4+3/2,1/4+3/2)node{$\bullet$}
(1/4+1,1/4+2)node{$\bullet$}(1/4+3/2,1/4+2)node{$\bullet$}
(1/4+1,1/4+5/2)node{$\bullet$}(1/4+3/2,1/4+5/2)node{$\bullet$};
} \qquad
\tikzpic{-0.5}{
\draw(0,0)--(0,6/2)(1/2,0)--(1/2,6/2)(1,0)--(1,6/2)(3/2,0)--(3/2,6/2)(2,0)--(2,6/2);
\draw(0,0)--(4/2,0)(0,1/2)--(4/2,1/2)(0,1)--(4/2,1)(0,3/2)--(4/2,3/2)(0,2)--(4/2,2)
(0,5/2)--(4/2,5/2)(0,3)--(4/2,3);
\draw(1/4,1/4)node{$\bullet$}(1/4+1/2,1/4)node{$\bullet$}
(1/4+1,1/4+1/2)node{$\bullet$}(1/4+3/2,1/4+1/2)node{$\bullet$}
(1/4+1,1/4+1)node{$\bullet$}(1/4+3/2,1/4+1)node{$\bullet$}
(1/4,1/4+3/2)node{$\bullet$}(1/4+1/2,1/4+3/2)node{$\bullet$}
(1/4,1/4+2)node{$\bullet$}(1/4+1/2,1/4+2)node{$\bullet$}
(1/4+1,1/4+5/2)node{$\bullet$}(1/4+3/2,1/4+5/2)node{$\bullet$};
}

\caption{$C_{0}$ (left picture) and $C_{1}$ (right picture) in $\mathrm{DC}_{2,3,2}$}
\label{fig:lowhigh}
\end{figure}
\end{example}

\begin{prop}
A Dellac configuration $C$ satisfies 
$\mathrm{inv}(C_{0})\le\mathrm{inv}(C)\le\mathrm{inv}(C_{1})$ and 
equality holds if and only if $C=C_{0}$ or $C=C_{1}$.
\end{prop}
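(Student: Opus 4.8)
The plan is to prove both bounds by an exchange (corner-swap) argument on the dots. Given two dots of $C$ at positions $(i_1,c_1)$ and $(i_2,c_2)$ with $i_1<i_2$ and $c_1>c_2$ — that is, an inversion — whose two \emph{aligned} corners $(i_1,c_2)$ and $(i_2,c_1)$ are both empty, I replace the two dots by dots at those corners. This preserves every row count and every column count. A direct case analysis on a third dot $(i_3,c_3)$ shows that every other dot contributes a nonpositive change to the inversion count while the swapped pair itself contributes $-1$; hence such a \emph{reducing} move strictly decreases $\mathrm{inv}$. Crucially, the staircase condition is preserved automatically: writing the constraint as $\lceil c/l\rceil\le i\le \lceil c/l\rceil+(m-1)n$, the four bounds for the two new dots follow from $\lceil c_2/l\rceil\le\lceil c_1/l\rceil$ together with $i_1<i_2$ and the validity of the original two dots. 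Thus a reducing move is \emph{always} legal.

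For the lower bound I iterate reducing moves. Since $\mathrm{inv}$ is a nonnegative integer that strictly decreases, the process terminates at a configuration admitting no reducing move, and I then show this terminal configuration is exactly $C_0$. Move-freeness translates into a \emph{separation} condition on the row-sets $R_j=\{i:(i,j)\text{ is a dot}\}$: for all $j<j'$ the private rows $R_{j'}\setminus R_j$ all lie above the private rows $R_j\setminus R_{j'}$. Combining this with the boundary rows forced by the staircase (row $1$ must consist of columns $1,\dots,l$ and, dually, the top row of columns $(n-1)l+1,\dots,nl$) and with the margin conditions (each row has $l$ dots, each column $m$ dots), a bottom-up induction pins down $R_j=\{(c-1)m+1,\dots,cm\}$ with $c=\lceil j/l\rceil$, i.e. the block-diagonal configuration $C_0$. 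Consequently $\mathrm{inv}(C)\ge \mathrm{inv}(C_0)$, and since any $C\neq C_0$ admits at least one reducing move, the inequality is strict unless $C=C_0$.

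The upper bound is handled by the mirror-image \emph{increasing} move, which takes an aligned pair to an inversion and strictly increases $\mathrm{inv}$; the extremal configuration here is $C_1$. The one genuine difficulty is that, unlike the reducing move, an increasing move need not respect the staircase: moving a dot into a smaller column can violate the lower bound $\lceil c/l\rceil\le i$, and moving a dot into a larger column can violate the upper bound $i\le \lceil c/l\rceil+(m-1)n$. I therefore cannot swap an arbitrary aligned pair, and this is the main obstacle. The key step is to show that whenever $C\neq C_1$ there still exists a \emph{legal} increasing move; I plan to produce one by choosing an aligned pair that is extremal (for instance with its two columns in the same block, where the ceiling inequalities hold automatically), and then to characterize the configurations admitting no legal increasing move as exactly $\{C_1\}$, again via a separation-type analysis fused with the forced boundary rows. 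This yields $\mathrm{inv}(C)\le \mathrm{inv}(C_1)$ with equality iff $C=C_1$.

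I expect the two characterizations of the extremal configurations to be the technical heart of the argument: the local separation condition alone is too weak, as it is satisfied by patterns that are infeasible globally, so it must be combined with the margin and staircase constraints. A convenient bookkeeping device throughout is the monovariant $T(C)=\sum_{(i,j)\text{ a dot}} i\,j$, which strictly increases under every reducing move and strictly decreases under every increasing move; it certifies termination of the move sequences and provides an independent check that $C_0$ and $C_1$ are the extremal configurations.
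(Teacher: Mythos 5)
Your proposal follows essentially the same route as the paper's own proof: the elementary exchange move that replaces an inverting pair of dots by the aligned pair on the same two rows and columns (the paper's operation $\gamma$ taking $C_{\triangle}$ to $C_{\square}$), the computation that every such move strictly decreases $\mathrm{inv}$ (the paper gets $\mathrm{inv}(C_{\triangle})-\mathrm{inv}(C_{\square})=a_1+a_2+2a_3+a_4+a_5+1\ge 1$, matching your case analysis on a third dot), and the identification of $C_0$ and $C_1$ as the unique configurations admitting no reducing (resp.\ increasing) move. The parts you verify are correct, and your observation that a reducing move automatically respects $\lceil j/l\rceil\le i\le\lceil j/l\rceil+(m-1)n$ while the inverse move need not is a genuine subtlety that the paper passes over silently (it simply asserts that any $C$ can be transformed into $C_1$ by successive applications of $\gamma^{-1}$).

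The gap is precisely at what you yourself call the technical heart, and it is not closed. For the lower bound, the claim that your separation condition on the row sets $R_j$, together with the margins and the forced boundary rows, forces $R_j=\{(c-1)m+1,\ldots,cm\}$ is asserted but the bottom-up induction is never carried out; for $l\ge 2$ this is not routine, because distinct columns may share rows, so separation alone does not order the columns and must be played off against the row counts. For the upper bound you only state that you \emph{plan} to produce a legal increasing move whenever $C\neq C_1$, and the one concrete device you offer --- choosing an aligned pair whose two columns lie in the same block, where the ceiling inequalities hold automatically --- cannot work in general: when $l=1$ (the classical Dellac case) every block consists of a single column, so no such pair exists at all, yet legal increasing moves must still be found; already for $C_0\in\mathrm{DC}_{1,2,2}$ the unique legal increasing move (swapping the dots in rows $2$ and $3$) involves columns in different blocks. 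So the existence of a legal increasing move for every $C\neq C_1$, and both extremal characterizations, remain unproven in your write-up. To be fair, the paper's proof dispatches exactly these points with ``it is easy to see''; but since they carry the entire content of the uniqueness statement, a complete blind proof would have to supply them, e.g.\ by an explicit choice of a minimal blocked pair and a verification of the staircase inequalities for it.
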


\begin{proof}
We first show that $\mathrm{inv}(C_{0})\le\mathrm{inv}(C)$ and 
the equality holds if and only if $C=C_{0}$.
Let $C_{\triangle}\in\mathrm{C}_{l,m,n}$ be a configuration 
such that there are no dots in the boxes $(i_{1},j_{1})$ and 
$(i_{2},j_{2})$, it has two dots in the boxes $(i_{1},j_{2})$ and 
$(i_{2},j_{1})$ and it has $a_{i}$, $1\le i\le 5$, dots in the 
surrounded regions as in Figure \ref{Conf:sqtri}.
Similarly, let $C_{\square}$ be a configuration such that 
it has no dots in the boxes $(i_{1},j_{2})$ and 
$(i_{2},j_{1})$, it has two dots in the boxes $(i_{1},j_{1})$ and 
$(i_{2},j_{2})$ and a configuration of other dots are the same 
as $C_{\triangle}$.
\begin{figure}[ht]
\tikzpic{-0.5}{
\draw(0,0)--(2,0)(0,1/2)--(2,1/2)(0,3/2)--(2,3/2)(0,2)--(2,2);
\draw(0,0)--(0,2)(1/2,0)--(1/2,2)(3/2,0)--(3/2,2)(2,0)--(2,2);
\draw(1/4,1/4)node{$\square$}(7/4,7/4)node{$\square$};
\draw(1/4,7/4)node{$\triangle$}(7/4,1/4)node{$\triangle$};
\draw(-3/8,1/4)node{$i_1$}(-3/8,7/4)node{$i_{2}$};
\draw(1/4,-3/8)node{$j_1$}(7/4,-3/8)node{$j_2$};
\draw(-3/8,1)node{$\vdots$}(1,-3/8)node{$\cdots$};
\draw(1/4,1)node{$a_{2}$}(7/4,1)node{$a_{4}$}
     (1,1)node{$a_{3}$}(1,7/4)node{$a_{1}$}
     (1,1/4)node{$a_{5}$};
}
\caption{Configurations $C_{\triangle}$ and $C_{\square}$}
\label{Conf:sqtri}
\end{figure}
By a simple calculation we have 
\begin{eqnarray*}
\mathrm{inv}(C_{\triangle})-\mathrm{inv}(C_{\square})
&=&a_{1}+a_{2}+2a_{3}+a_{4}+a_{5}+1, \\
&\ge&1.
\end{eqnarray*}
The operation $\gamma$ which transforms a configuration $C_{\triangle}$ to 
another configuration $C_{\square}$ strictly decreases the number 
of inversions.
It is easy to see that one can obtain the configuration $C_{0}$ 
from $C$ by successive applications of the operation.
Further, we cannot perform such an operation on $C_{0}$.
This implies that $\mathrm{inv}(C_{0})$ is the minimum and it 
is unique by construction of the operation.

Similarly, the inverse operation $\gamma^{-1}$ strictly increases the 
number of inversions.
Any configuration $C$ can be transformed to $C_{1}$
by successive applications of $\gamma^{-1}$.
One can not perform $\gamma^{-1}$ on the configuration $C_{1}$ any more.
By the same reason as in the case of $C_{0}$, $\mathrm{inv}(C_{1})$ 
is the maximum and it is unique. 
\end{proof}

Let $C\in\mathrm{DC}_{l,m,n}$.
We assign a label to a dot in $C$.
We have two cases:

\paragraph{Case A ($mn$ is even)}
For all $1\le i\le mn/2$, a dot in the $i$-th row
of $C$ (from bottom to top) is labeled by 
the integer $e_{i}=2i+n(m-1)+2$, and a dot 
in the $(mn/2+i)$-th row is labeled by the integer 
$e_{n+i}:=2i-1$.

\paragraph{Case B ($mn$ is odd)}
For all $1\le i\le \lfloor mn/2\rfloor$, a dot in the $i$-th row
of $C$ (from bottom to top) is labeled by 
the integer $e_{i}=2i+n(m-2)+1$. 
For all $1\le i\le \lfloor mn/2\rfloor+1$ a dot 
in the $(\lfloor mn/2\rfloor+i)$-th row is labeled by the integer 
$e_{\lfloor mn/2\rfloor+i}:=2i-1$.

We denote by $\mathrm{word}(C)$ 
a word (a sequence of integers) obtained from a label of $C$ by reading 
the label of dots from bottom to top and from left to right.

\begin{remark}
The order of reading the labels of dots to obtain $\mathrm{word}(C)$ 
is different from the one in \cite{Big14}.
To follow the latter order, we have to modify Definition \ref{defn:ppm}  
and the third condition in Definition \ref{def:NDP}.
\end{remark}

\begin{example}
\label{example:GDCLabel}
The label of a Dellac configuration $C\in\mathrm{DC}_{2,3,2}$ is depicted 
in Figure \ref{ex:GDCLabel}.
A word associated with $C$ is $\mathrm{word}(C)=683613\underline{10}158\underline{10}5$.

\begin{figure}[ht]
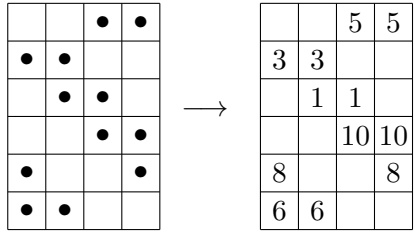


\tikzpic{-0.5}{
\draw(0,0)--(0,6/2)(1/2,0)--(1/2,6/2)(1,0)--(1,6/2)(3/2,0)--(3/2,6/2)(2,0)--(2,6/2);
\draw(0,0)--(4/2,0)(0,1/2)--(4/2,1/2)(0,1)--(4/2,1)(0,3/2)--(4/2,3/2)(0,2)--(4/2,2)
(0,5/2)--(4/2,5/2)(0,3)--(4/2,3);
\draw(1/4,1/4)node{$\bullet$}(1/4+1/2,1/4)node{$\bullet$}
(1/4,1/4+1/2)node{$\bullet$}(1/4+3/2,1/4+1/2)node{$\bullet$}
(1/4+1,1/4+1)node{$\bullet$}(1/4+3/2,1/4+1)node{$\bullet$}
(1/4+1/2,1/4+3/2)node{$\bullet$}(1/4+1,1/4+3/2)node{$\bullet$}
(1/4,1/4+2)node{$\bullet$}(1/4+1/2,1/4+2)node{$\bullet$}
(1/4+1,1/4+5/2)node{$\bullet$}(1/4+3/2,1/4+5/2)node{$\bullet$};
}
$\longrightarrow$
\tikzpic{-0.5}{
\draw(0,0)--(0,6/2)(1/2,0)--(1/2,6/2)(1,0)--(1,6/2)(3/2,0)--(3/2,6/2)(2,0)--(2,6/2);
\draw(0,0)--(4/2,0)(0,1/2)--(4/2,1/2)(0,1)--(4/2,1)(0,3/2)--(4/2,3/2)(0,2)--(4/2,2)
(0,5/2)--(4/2,5/2)(0,3)--(4/2,3);
\draw(1/4,1/4)node{$6$}(1/4+1/2,1/4)node{$6$}
(1/4,1/4+1/2)node{$8$}(1/4+3/2,1/4+1/2)node{$8$}
(1/4+1,1/4+1)node{$10$}(1/4+3/2,1/4+1)node{$10$}
(1/4+1/2,1/4+3/2)node{$1$}(1/4+1,1/4+3/2)node{$1$}
(1/4,1/4+2)node{$3$}(1/4+1/2,1/4+2)node{$3$}
(1/4+1,1/4+5/2)node{$5$}(1/4+3/2,1/4+5/2)node{$5$};
}

\caption{Label of a Dellac configuration}
\label{ex:GDCLabel}
\end{figure}

\end{example}

\subsection{Definition of \texorpdfstring{$\phi$}{phi}}
\label{def:phi}
Given a triplet $(l,m,n)$, we define $L:=l(nm+2(m-1))$ for 
$mn$ even and $L:=l(mn+2(m-1)-1)$ for $mn$ odd.
Recall that $\mathfrak{S}_{L}^{l}$ is the set of generalized 
permutations $\sigma$ of length $L$ such that 
an integer $1\le i\le L/l$ appears exactly $l$ times 
in $\sigma$.

We define $\phi:\mathrm{DC}_{l,m,n}\rightarrow\mathfrak{S}_{L}$ 
by composing maps $\phi_{1}:\mathrm{DC}_{l,m,n}\rightarrow\mathfrak{S}_{L}^{l}$, 
$\phi_{2}:\mathfrak{S}_{L}^{l}\rightarrow\mathfrak{S}_{L}$, 
{\it i.e.}, $\phi:=\phi_{2}\circ\phi_1$.
Roughly speaking, a map $\phi_2$ is an inverse of standardization of a generalized 
permutation in $\mathfrak{S}_{L}^{l}$.
Note that inverse and standardization do not commute in $\mathfrak{S}_{L}^{l}$.

The map $\phi_{1}:\mathrm{DC}_{l,m,n}\rightarrow\mathfrak{S}_{L}^{l}$
is defined by 
\begin{eqnarray*}
\phi_{1}(C):=w_{1}\ast \mathrm{word}(C)\ast w_{2},
\end{eqnarray*}
where 
\begin{eqnarray*}
w_1&:=&2^{l}4^{l}\cdots (n(m-2)+2)^{l}, \\
w_{2}&:=&(mn+1)^{l}(mn+3)^{l}\cdots(2n(m-1)+1)^{l},
\end{eqnarray*}
for $mn$ even, and 
\begin{eqnarray*}
w_1&:=&2^{l}4^{l}\cdots ((m-2)n+1)^{l}, \\
w_2&:=&(mn+2)^{l}(mn+4)^{l}\cdots(2(m-1)n-1)^{l},
\end{eqnarray*}
for $mn$ odd. 

The map $\phi_{2}:\mathfrak{S}_{L}^{l}\rightarrow\mathfrak{S}_{L}$, 
$\alpha:=\alpha_{1}\ldots \alpha_{L}\mapsto \beta=\beta_{1}\ldots\beta_{L}$
is defined as follows.
Recall that an integer $1\le k\le L/l$ appears exactly $l$ times in $\alpha$.
Suppose that $\alpha_{i_1}=\alpha_{i_2}=\ldots,=\alpha_{i_l}=k$ and 
$i_1<i_2<\ldots<i_{l}$.
Then, we define $\beta_{l(k-1)+p}:=i_{p}$ for $1\le p\le l$.

\begin{example}
We consider the same Dellac configuration as in Example \ref{example:GDCLabel}.
By definition, we have 
\begin{eqnarray*}
\phi_{1}(C)&=&2244683613\underline{10}158\underline{10}57799, \\
\phi(C)&=&(9,12,1,2,7,10,3,4,13,16,5,8,17,18,6,14,19,20,11,15).
\end{eqnarray*}
\end{example}

\subsection{Alternative algorithm}
\label{sec:GDCalt}
Let $C\in\mathrm{DC}_{l,m,n}$ be a generalized Dellac configuration.
We enumerate dots in $C$ by $1,2,\ldots,lmn$ 
from bottom to top and from left to right.
We denote by $d_{i}, 1\le i\le lmn$, the dot enumerated by an integer $i$.
A word $\tau_C\in\mathfrak{S}_{lmn}$ is defined by reading integers associated 
with dots from left to right and from bottom to top.

The following lemma is clear from the construction of the word 
$\tau_{C}$.
\begin{lemma}[Lemma 2.5 in \cite{Big14}]
\label{lemma:invtau}
Let $C\in\mathrm{DC}_{l,m,n}$ and $1\le p<q\le lmn$.
Then, a pair $(d_p,d_q)$ of dots is an inversion of $C$ 
if and only if $(p,q)$ is an inversion of $\tau_C$, namely, 
$q$ is left to $p$ in $\tau_C$.
\end{lemma}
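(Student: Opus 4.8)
The plan is to make both orderings explicit and then compare them coordinate by coordinate. Write $(i_d,j_d)$ for the coordinates of a dot $d$, so that $i_d$ is its row (from the bottom) and $j_d$ its column (from the left). Since each column of $C$ contains exactly $m$ dots, the enumeration ``from bottom to top and from left to right'' assigns to $d$ the integer $(j_d-1)m+s_d$, where $s_d$ is the rank of $d$ from the bottom among the dots of column $j_d$; in particular the enumeration index is strictly increasing in the pair $(j_d,i_d)$ ordered lexicographically with the \emph{column} first. Dually, since each row contains exactly $l$ dots, the reading order ``from left to right and from bottom to top'' producing $\tau_C$ places $d$ at position $(i_d-1)l+t_d$ in the word, where $t_d$ is the rank of $d$ from the left among the dots of row $i_d$; hence the position in $\tau_C$ is strictly increasing in $(i_d,j_d)$ ordered lexicographically with the \emph{row} first. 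The key observation is simply that these two total orders are transposes of one another.

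With these formulas the three conditions become coordinate inequalities. The hypothesis $p<q$ says the enumeration index of $d_p$ is the smaller, i.e.\ either $j_p<j_q$, or $j_p=j_q$ and $i_p<i_q$; in either case $j_p\le j_q$. The conclusion ``$q$ is left to $p$ in $\tau_C$'' says the position of $d_q$ precedes that of $d_p$, i.e.\ either $i_q<i_p$, or $i_q=i_p$ and $j_q<j_p$. Finally, by the definition of an inversion the pair $\{d_p,d_q\}$ is an inversion of $C$ exactly when one of the two dots lies strictly below and strictly to the right of the other. It then remains to run a two-case check. If $j_p<j_q$, the clause $j_q<j_p$ is excluded, so ``$q$ left of $p$'' reduces to $i_p>i_q$; and since $d_p$ is strictly to the left of $d_q$, the pair is an inversion precisely when $d_p$ lies strictly above $d_q$, again $i_p>i_q$, so the two conditions coincide. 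If $j_p=j_q$ (which forces $i_p<i_q$ by $p<q$), the two dots share a column, so no inversion can occur; correspondingly both $i_q<i_p$ and the tie-break clause ($i_q=i_p$ and $j_q<j_p$) fail, so $q$ is not to the left of $p$, and again the conditions agree. These cases are exhaustive, giving the equivalence.

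The only point requiring genuine care is the degenerate configurations, namely two dots in the same row or in the same column, where the strict inequalities in the inversion definition of Section~\ref{sec:GDC} must be matched against the lexicographic tie-breaking built into the two orders. The transposition of the column-major enumeration and the row-major reading is exactly what makes these match: the coordinate that breaks ties in the enumeration is the one that is primary in $\tau_C$, and vice versa. Beyond this bookkeeping there is no real obstacle, which is why the statement is recorded as being clear from the construction of $\tau_C$.
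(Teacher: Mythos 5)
Your proof is correct. The paper offers no argument of its own---it simply declares the lemma ``clear from the construction of the word $\tau_C$''---and your coordinate-by-coordinate comparison of the column-major enumeration index $(j_d-1)m+s_d$ with the row-major reading position $(i_d-1)l+t_d$, including the same-column and same-row tie-breaking cases, is exactly the verification the paper takes for granted.
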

The following proposition is a direct consequence of Lemma \ref{lemma:invtau}.
\begin{prop}
Let $C\in\mathrm{DC}_{l,m,n}$. Then, we have 
$\mathrm{inv}(C)=\mathrm{inv}(\tau_C)$.
\end{prop}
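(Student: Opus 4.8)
The plan is to reduce the equality to the bijective matching of inverting pairs supplied by Lemma \ref{lemma:invtau}, so that both quantities are seen to count the same set of pairs $(p,q)$ with $p<q$.

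First I would record that $\tau_C$ is genuinely a permutation of $[lmn]$: the enumeration $d_1,\ldots,d_{lmn}$ assigns the labels $1,\ldots,lmn$ bijectively to the dots, and $\tau_C$ is merely a reordering of these labels, so each integer occurs exactly once. Consequently an inversion of the word $\tau_C$—a pair of positions $a<b$ with $(\tau_C)_a>(\tau_C)_b$—can be reindexed by the pair of values it involves: writing $p<q$ for the two values, such a pair is an inversion of $\tau_C$ precisely when the larger value $q$ stands to the left of $p$ in $\tau_C$. Thus
\[
\mathrm{inv}(\tau_C)=\#\{(p,q): 1\le p<q\le lmn,\ q\text{ is left of }p\text{ in }\tau_C\}.
\]

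Next I would rewrite $\mathrm{inv}(C)$ in the same bookkeeping. By definition $\mathrm{inv}(C)$ counts the pairs of dots that form an inversion of $C$; using the labeling, every such pair is $\{d_p,d_q\}$ for a unique choice $p<q$, so
\[
\mathrm{inv}(C)=\#\{(p,q):1\le p<q\le lmn,\ (d_p,d_q)\text{ is an inversion of }C\}.
\]
Now Lemma \ref{lemma:invtau} asserts exactly that, for $p<q$, the pair $(d_p,d_q)$ is an inversion of $C$ if and only if $q$ is left of $p$ in $\tau_C$. Applying this equivalence term by term identifies the two sets counted above, whence $\mathrm{inv}(C)=\mathrm{inv}(\tau_C)$.

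There is no substantive obstacle here, since Lemma \ref{lemma:invtau} already carries the combinatorial content. The only point demanding care is the translation between the positional definition of an inversion of a word and its value-based reformulation as ``$q$ to the left of $p$''; this is where the fact that $\tau_C$ is a permutation (rather than a general word) is used, ensuring each pair of dots corresponds to exactly one pair $p<q$ and is counted once on each side of the equality.
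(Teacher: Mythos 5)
Your proof is correct and matches the paper's approach: the paper simply declares the proposition a direct consequence of Lemma \ref{lemma:invtau}, and your argument is exactly that deduction, spelled out by re-indexing positional inversions of the permutation $\tau_C$ as value pairs and matching them bijectively, via the lemma, with inverting pairs of dots in $C$.
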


\begin{example}
Let $C$ be a generalized Dellac configuration as in Example \ref{example:GDCLabel}.
\begin{eqnarray*}
\tikzpic{-0.5}{
\draw(0,0)--(0,6/2)(1/2,0)--(1/2,6/2)(1,0)--(1,6/2)(3/2,0)--(3/2,6/2)(2,0)--(2,6/2);
\draw(0,0)--(4/2,0)(0,1/2)--(4/2,1/2)(0,1)--(4/2,1)(0,3/2)--(4/2,3/2)(0,2)--(4/2,2)
(0,5/2)--(4/2,5/2)(0,3)--(4/2,3);
\draw(1/4,1/4)node{$1$}(1/4+1/2,1/4)node{$4$}
(1/4,1/4+1/2)node{$2$}(1/4+3/2,1/4+1/2)node{$10$}
(1/4+1,1/4+1)node{$7$}(1/4+3/2,1/4+1)node{$11$}
(1/4+1/2,1/4+3/2)node{$5$}(1/4+1,1/4+3/2)node{$8$}
(1/4,1/4+2)node{$3$}(1/4+1/2,1/4+2)node{$6$}
(1/4+1,1/4+5/2)node{$9$}(1/4+3/2,1/4+5/2)node{$12$};
}
\longrightarrow \tau_C=142\underline{10}7\underline{11}58369\underline{12}.
\end{eqnarray*}
The number of inversions is $\mathrm{inv}(C)=\mathrm{inv}(\tau_C)=19$.
\end{example}

We enumerate the dots in $C$ by $1,2,\ldots,lmn$ from left to right 
and from bottom to top. 
We denote by $f_{i}, 1\le i\le lmn$, the dot enumerated by an integer $i$.
We have 
\begin{prop}[Proposition 2.6 in \cite{Big14}]
For all $1\le i\le lmn$, we have $\tau_{C}(i)=i+l_C(f_i)-r_C(f_i)$,
where 
\begin{align*}
l_C(f_i)&:=\#\{k>i | \tau_{C}(k)<\tau_{C}(i)\}, \\
r_C(f_i)&:=\#\{k<i | \tau_{C}(k)>\tau_{C}(i)\}.
\end{align*}
\end{prop}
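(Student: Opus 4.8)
The plan is to recognize that the stated formula is, for the single permutation $\pi=\tau_C$, an instance of a purely combinatorial identity valid for \emph{every} permutation, and then to verify that the two counts appearing on the right-hand side really are the quantities $l_C(f_i)$ and $r_C(f_i)$ introduced earlier through inversions of $C$. I would therefore first isolate and prove the following statement: for any $\pi\in\mathfrak{S}_{N}$ and any index $i$,
\begin{align*}
\pi(i)=i+\#\{k>i:\pi(k)<\pi(i)\}-\#\{k<i:\pi(k)>\pi(i)\}.
\end{align*}
Specializing to $N=lmn$ and $\pi=\tau_C$ then yields the proposition, once the counts are identified with $l_C(f_i)$ and $r_C(f_i)$.

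For the identity itself I would argue by counting the set $\{1,\dots,N\}\setminus\{i\}$ in two ways. On one hand, since $\pi$ is a bijection, $\pi(i)-1$ equals the number of indices $k$ with $\pi(k)<\pi(i)$, which splits as $\#\{k<i:\pi(k)<\pi(i)\}+\#\{k>i:\pi(k)<\pi(i)\}$. On the other hand, $i-1$ equals the number of indices $k<i$, which splits as $\#\{k<i:\pi(k)<\pi(i)\}+\#\{k<i:\pi(k)>\pi(i)\}$. Subtracting these two expressions cancels the common term $\#\{k<i:\pi(k)<\pi(i)\}$ and leaves precisely $\pi(i)-i=\#\{k>i:\pi(k)<\pi(i)\}-\#\{k<i:\pi(k)>\pi(i)\}$, which is the desired identity. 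No property of Dellac configurations enters at this stage.

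It then remains to check that the two $\tau_C$-counts coincide with the geometric quantities $l_C(f_i)$ and $r_C(f_i)$, and here I would translate each condition into a statement about the position of $f_k$ relative to $f_i$. Because $\tau_C$ lists the dots in the row-major order ``left to right and from bottom to top'' while its values are the column-major labels, the condition $k>i$ says $f_k$ lies in a strictly higher row than $f_i$, or in the same row to its right, and $\tau_C(k)<\tau_C(i)$ says $f_k$ lies in a strictly earlier column, or in the same column strictly lower. The only joint possibility is that $f_k$ is strictly above and strictly to the left of $f_i$, since the same-row and same-column cases are each incompatible with one of the two conditions. By Lemma \ref{lemma:invtau} such pairs are exactly the inversions of $C$ whose partner lies above $f_i$, that is, $l_C(f_i)$; the symmetric analysis identifies the other count with $r_C(f_i)$. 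The one place demanding genuine care, and the only real obstacle, is keeping the two distinct reading orders straight: interchanging row-major and column-major swaps $l_C$ with $r_C$ and flips the sign in the formula, so the bookkeeping of which direction is ``before'' must be carried out consistently.
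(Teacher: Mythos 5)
Your proof is correct. Note that the paper contains no argument of its own for this statement: it is imported verbatim as Proposition 2.6 of \cite{Big14} (just as the preceding Lemma \ref{lemma:invtau} is imported by citation), so your write-up is supplying a proof where the paper supplies only a reference. Your route is sound and has a pleasant structure: the first step observes that, since the proposition as stated \emph{defines} $l_C(f_i)$ and $r_C(f_i)$ by the $\tau_C$-counts, the displayed formula is an instance of the universal identity $\pi(i)=i+\#\{k>i:\pi(k)<\pi(i)\}-\#\{k<i:\pi(k)>\pi(i)\}$, valid for every $\pi\in\mathfrak{S}_N$, and your cancellation of the common term $\#\{k<i:\pi(k)<\pi(i)\}$ proves that identity correctly with no input from Dellac configurations. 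The second step, checking that these counts agree with the inversion-based quantities $l_C$ and $r_C$ defined in Section \ref{sec:GDC}, is what justifies the notation, and your case analysis is right: with $f$-indices in row-major order and $\tau_C$-values in column-major order, the conditions $k>i$ and $\tau_C(k)<\tau_C(i)$ jointly force the dot $f_k$ to lie strictly above and strictly to the left of $f_i$ (the same-row and same-column cases each violate one condition), which is exactly the configuration counted by $l_C(f_i)$; symmetrically for $r_C(f_i)$. One small quibble: calling the first step ``counting $\{1,\ldots,N\}\setminus\{i\}$ in two ways'' is not literally what you do — you count the two different sets $\{k:\pi(k)<\pi(i)\}$ and $\{k:k<i\}$ and subtract — but the computation you actually write is explicit and correct, so this is a matter of phrasing, not a gap. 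What your approach buys over the paper's citation is self-containedness, and it isolates the useful fact that the formula is a completely general property of permutations, the Dellac structure entering only through the dictionary between the two reading orders.
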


\section{Dellac configurations and normalized Dumont permutations}
\label{sec:GDCnDp}
\subsection{Switch of a generalized Dellac configuration}
In this subsection, we introduce an operation on a generalized 
Dellac configuration, called switch following \cite{Big14}.

Let $C\in\mathrm{DC}_{l,m,n}$ and $i\in\{1,2,\ldots,lmn-1\}$.
We denote by $\mathrm{Sw}^{i}(C)$ the tableau obtained by 
switching two dots $d_{i}$ and $d_{i+1}$. 
Here, switch means that when the Cartesian coordinates of 
$d_{k}$, $k=i,i+1$, are $(p_{k},q_{k})$, then we delete 
two dots $d_{i}$ and $d_{i+1}$ from $C$ and add two 
dots whose coordinates are $(p_{i},q_{i+1})$ and 
$(p_{i+1},q_{i})$.
If $\mathrm{Sw}^{i}(C)$ is in $\mathrm{DC}_{l,m,n}$, we say 
that $C$ is switchable at $i$.

\begin{remark}
A generalized Dellac configuration $C\in\mathrm{DC}_{l,m,n}$ 
contains $l$ dots in a row. Therefore, in some cases, 
we can delete two dots, however, cannot add two dots in the switch operation.
In this case, we define $\mathrm{Sw}^{i}(C):=\emptyset$.
Further, there may be several ways of switches in $C$ in the 
same row.  
\end{remark}

Suppose that there exist no dots in the coordinates
$(i,j)$ with $i_1\le i\le i_2$ and $j_1\le j\le j_2$
except two coordinates $(i_1,j_1)$ and $(i_2,j_2)$ 
in matrix notation.
An elementary switch $\eta_{(i_1,i_2)}$ is a switch of two dots whose 
coordinates are $(i_1,j_1)$ and $(i_2,j_2)$. 
The action of $\eta_{(i_1,i_2)}$ results in two dots whose 
coordinates are $(i_1,j_2)$ and $(i_2,j_1)$.
The following proposition is obvious from the definition of 
an elementary switch.

\begin{prop}
\label{prop:eswitch}
Let $C\in\mathrm{DC}_{l,m,n}$ and $\eta_{i}$ be a elementary 
switch.
Assume $\eta_{i}(C)\neq 0$.
\begin{enumerate}
\item $\mathrm{inv}(\eta_{i}(C))=\mathrm{inv}(C)-1$. 
\item Let $C_0$ is the lowest configuration in $\mathrm{DC}_{l,m,n}$.
Given $C$, there exists a set of pairs of integers 
$I_{i}:=(i_{2i-1},i_{2i})$  
such that 
\begin{align*}
C=\eta_{I_k}\circ\eta_{I_{k-1}}\circ\cdots\circ\eta_{I_{1}}(C_0).
\end{align*}
\end{enumerate}
\end{prop}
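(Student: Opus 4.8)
The plan is to establish the two parts of Proposition~\ref{prop:eswitch} separately, with part (1) providing the computational tool that makes part (2) a routine termination argument. For part (1), I would argue directly from the local structure of an elementary switch. By hypothesis, an elementary switch $\eta_{(i_1,i_2)}$ acts on two dots at coordinates $(i_1,j_1)$ and $(i_2,j_2)$ such that the rectangular region they span contains no other dots except these two corners; write $i_1<i_2$ (in matrix notation, so $i_1$ is the higher row) and necessarily $j_1<j_2$, since otherwise the pair would already be an inversion and moving it to the anti-diagonal corners would not decrease the count. The key observation is that the emptiness of the spanned rectangle means that \emph{no} third dot can have an inversion status that changes under the switch: any dot outside the rectangle sits either entirely above/below or entirely left/right of both moved dots, so its relative order with each of the two dots is unaffected. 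Hence the only change to $\mathrm{inv}$ comes from the mutual relationship of the two switched dots themselves, which flips from non-inverted to inverted (or vice versa). This gives exactly $\mathrm{inv}(\eta_i(C))=\mathrm{inv}(C)-1$. I would phrase this cleanly by invoking the setup of Figure~\ref{Conf:sqtri} and the computation already recorded there, namely that $C_\triangle$ and $C_\square$ differing only at the four corners satisfy $\mathrm{inv}(C_\triangle)-\mathrm{inv}(C_\square)\ge 1$; for an \emph{elementary} switch the surrounding regions $a_1,\dots,a_5$ are empty, so the difference is exactly $1$.

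For part (2), I would use part (1) together with the fact, already proved in the preceding Proposition, that $C_0$ is the unique configuration attaining the minimum number of inversions. The argument is a descent: given any $C\in\mathrm{DC}_{l,m,n}$ with $C\neq C_0$, I must produce at least one elementary switch $\eta$ with $\eta(C)\in\mathrm{DC}_{l,m,n}$ (in particular $\eta(C)\neq\emptyset$). I would locate such a switch by finding a pair of dots forming an inversion that are ``adjacent'' in the sense that they can be brought to corners of an empty rectangle; concretely, among all inversions one can always select a minimal one whose spanned rectangle is empty, and the legality of the resulting configuration (each row still has $l$ dots, each column still has $m$ dots, and the band condition $\lceil j/l\rceil\le i\le\lceil j/l\rceil+(m-1)n$ is preserved) follows because an elementary switch only permutes the column positions within the same two rows, leaving row and column counts intact and keeping both dots inside the band. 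Applying such an $\eta$ strictly decreases $\mathrm{inv}$ by part (1); since $\mathrm{inv}$ is a nonnegative integer bounded below by $\mathrm{inv}(C_0)$, iterating terminates after finitely many steps, and by uniqueness of the minimizer it can only terminate at $C_0$. Reading the sequence of switches in reverse and setting $I_i$ to record the two rows involved at the $i$-th step yields the asserted factorization $C=\eta_{I_k}\circ\cdots\circ\eta_{I_1}(C_0)$.

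The main obstacle I anticipate is part (2), specifically verifying that a legal elementary switch always exists whenever $C\neq C_0$, and that it keeps us inside $\mathrm{DC}_{l,m,n}$ rather than producing $\emptyset$. The subtlety is that an arbitrary pair of inverted dots need not bound an empty rectangle, so I must argue that a \emph{minimal} inversion does, and that switching it does not violate the band condition by pushing a dot out of its allowed column range. I would handle the emptiness by a standard minimality choice (if the rectangle spanned by an inverted pair contained another dot, that dot would form a strictly smaller inversion with one of the two, contradicting minimality), and handle legality by noting the band constraint depends only on the pair $(\lceil j/l\rceil, i)$, which after the switch still satisfies the inequality because both dots remain in rows within the original band of both columns. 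The row- and column-count conditions are automatic since the switch is a transposition of two column-entries within a fixed pair of rows. Once this existence-and-legality claim is secured, the termination and uniqueness statements follow immediately from the preceding Proposition and part~(1).
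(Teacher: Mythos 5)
Your part (1) is sound in its final form: citing the computation recorded with Figure~\ref{Conf:sqtri}, namely $\mathrm{inv}(C_{\triangle})-\mathrm{inv}(C_{\square})=a_1+a_2+2a_3+a_4+a_5+1$, and observing that an elementary switch is precisely the case $a_1=\cdots=a_5=0$, yields $\mathrm{inv}(\eta_i(C))=\mathrm{inv}(C)-1$. (This is in effect all the paper does: it declares the proposition obvious from the definition, the real content being that earlier computation.) Be aware, however, that your heuristic justification is literally false: a dot lying directly above or below the rectangle, with column strictly between $j_1$ and $j_2$, \emph{does} change its inversion status with each of the two moved dots --- the two changes merely cancel. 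Only the total count, not the individual statuses, is preserved, which is why the bookkeeping of the $a_i$'s (or an explicit case check) is the right way to phrase it.

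The genuine gap is in part (2), in the very step you flag as the main obstacle. The claim that a containment-minimal inversion spans an empty rectangle is wrong whenever $l\ge2$ and $m\ge2$. Minimality only excludes dots strictly inside the rectangle or on its edges away from the corners; it cannot exclude a dot at an \emph{opposite corner}, i.e.\ a dot sharing its row with one dot of the pair and its column with the other, because such a dot forms no inversion with either of them (inversions require strict inequalities in both coordinates). This is not a fringe case: the lowest configuration $C_0$ consists of $n$ full $m\times l$ blocks, has $n\binom{m}{2}\binom{l}{2}>0$ inversions, and every one of them is blocked in exactly this way, so no elementary switch applies to $C_0$ --- as must be so, since by part (1) a switch would push $\mathrm{inv}$ below the minimum. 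Applied verbatim, your lemma would produce an elementary switch on $C_0$ and contradict your own termination argument. What part (2) actually needs is the structural statement: if every inversion of $C$ is corner-blocked, then $C=C_0$; equivalently, any $C\neq C_0$ admits an inversion whose full rectangle, opposite corners included, is empty. Your proposal does not prove this, and for $l\ge 2$ it is the entire content of the claim (for $l=1$ your argument does work, since no two dots share a row, so corner-blocking is impossible). There is also a smaller orientation issue: elementary switches strictly decrease $\mathrm{inv}$, so composing them starting from $C_0$ can never reach $C$; reading your reduction sequence backwards produces \emph{inverse} elementary switches, giving $C=\eta_{I_1}^{-1}\circ\cdots\circ\eta_{I_k}^{-1}(C_0)$ rather than the composition as written (the paper's own statement is loose on the same point, so this is worth a remark rather than a correction).
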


\subsection{Bijection \texorpdfstring{$\varphi$}{varphi}}
In this subsection, we will construct a bijection between 
a normalized Dumont permutation $\sigma$ of type $(l,m)$ in $\mathfrak{D'}_{L}^{l,m}$ 
and a generalized Dellac configuration $C$ in $\mathrm{DC}_{l,m,n}$.
Further, this bijection has a property that connects the statistics $\mathrm{st}$ 
of $\sigma$ with the inversion number of $C$.
Below, we fix a triplet $(l,m,n)$.

The main purpose of this section is to prove the following theorem. 
This is a generalization of the correspondence between Dellac configurations 
with $(l,m)=(1,2)$ and Dumont permutations studied in \cite{Big14}.
\begin{theorem}
\label{thrm:varphi}
There exists a bijection $\varphi:\mathrm{DC}_{l,m,n}\rightarrow\mathfrak{D'}_{L}^{l,m}$ 
such that 
\begin{align}
\label{eqn:st11}
\mathrm{st}(\psi(C))=\genfrac{(}{)}{0pt}{}{L/2}{2}-\mathrm{inv}(C)
\end{align}
where $\phi:=\phi_2\circ\phi_{1}(C)$ and $\varphi=\mathrm{dStd}^{l}\circ\phi$.
\end{theorem}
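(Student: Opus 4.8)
The plan is to construct the map $\varphi$ explicitly through its two components $\phi = \phi_2 \circ \phi_1$ and $\mathrm{dStd}^l$, then verify bijectivity and the statistics identity separately. First I would show that for any $C \in \mathrm{DC}_{l,m,n}$, the permutation $\phi(C) \in \mathfrak{S}_L$ actually satisfies the third condition of Definition \ref{def:NDP}: the construction of $\phi_2$ groups together the $l$ positions where each label $k$ occurs, so the subsequences $\pi((p-1)l+1),\ldots,\pi(pl)$ are weakly increasing by design, and the weak increase inside each block of $l$ columns of $C$ (reading left to right) must be translated into the parity property of the associated $\pi'_p$. I expect that the prefix and suffix words $w_1$ and $w_2$ prepended and appended in $\phi_1$ are precisely engineered so that the forced values in the first two bullets of Definition \ref{def:NDP} hold automatically; verifying this is a bookkeeping check separating the $mn$-even and $mn$-odd cases, matching the two labeling schemes (Case A and Case B) against the two formulas for $L$.

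Next I would establish that $\varphi = \mathrm{dStd}^l \circ \phi$ lands in $\mathfrak{D'}_L^{l,m}$ and is a bijection. For injectivity I would invoke Proposition \ref{prop:uniNDP}: since $\phi(C)$ satisfies the third condition and $\mathrm{dStd}^l$ is injective on permutations meeting that condition, distinct configurations giving distinct $\phi(C)$ force distinct images. So the crux of injectivity reduces to showing $\phi$ itself is injective on $\mathrm{DC}_{l,m,n}$, which follows because $\phi_1$ records $\mathrm{word}(C)$ verbatim between the fixed words $w_1,w_2$, and $\mathrm{word}(C)$ determines $C$ (the labels $e_i$ encode row positions, and reading left to right within each row recovers columns). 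For surjectivity I would reverse the construction: given a normalized Dumont permutation $\sigma$, recover its unique $\pi$, strip off the forced prefix/suffix entries, and read back the dot positions; the parity property together with the weak-increase condition guarantees the resulting dot placement satisfies the three conditions of Definition \ref{defn:GDC}, in particular the staircase bound $\lceil j/l\rceil \le i \le \lceil j/l\rceil + (m-1)n$.

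For the statistics identity \eqref{eqn:st11}, I would unfold the definition \eqref{eqn:defst} of $\mathrm{st}(\sigma)$ in terms of $\pi = \phi(C)$ and relate each of the three subtracted quantities to inversions of $C$. The natural strategy is to show that $L^2/4 - \binom{L/2}{2}$ accounts exactly for the contributions coming from the forced entries in $w_1$ and $w_2$ (which are independent of $C$), so that the $C$-dependent part $\sum_{p\ \mathrm{odd}}\pi(pl+q) + \mathrm{inv}(\pi^o) + \mathrm{inv}(\pi^e)$ must equal $\binom{L/2}{2} - \big(\binom{L/2}{2} - \mathrm{inv}(C)\big)$, i.e. decompose $\mathrm{inv}(C)$ into the sum over the ``descent'' positions plus the two partial-word inversion counts. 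I would use Lemma \ref{lemma:invtau} and the alternative formula $\tau_C(i) = i + l_C(f_i) - r_C(f_i)$ to rewrite $\mathrm{inv}(C)$ as a sum of local inversion counts at each dot, then match these against the three terms in $\mathrm{st}$ by checking that odd-indexed blocks of $\pi$ correspond to the lower half of the rows (labels $e_i = 2i-1$) and even-indexed blocks to the upper half.

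The main obstacle I anticipate is the precise verification of the statistics identity, and within it the matching of inversions split across the three terms of \eqref{eqn:defst}. The difficulty is that $\mathrm{inv}(C)$ counts pairs of dots with $i_1 < i_2$ and $j_1 > j_2$, whereas $\mathrm{st}(\sigma)$ is written through $\pi$ after both labeling and the non-commuting operations ``inverse'' and ``standardization'' have been applied; the note in the excerpt that inverse and standardization do not commute in $\mathfrak{S}_L^l$ warns that one cannot simply transport inversion counts through $\phi_2$ naively. I would handle this by tracking a single inversion of $C$ through the entire chain and determining which of the three terms it feeds into — an inversion between two dots in the same half versus opposite halves behaves differently — and by carefully accounting for the spurious inversions introduced by the padding words $w_1, w_2$, which should cancel against the constant $L^2/4 - \binom{L/2}{2}$. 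The $(l,m)=(1,2)$ specialization in \cite{Big14} provides the template, so the real work is verifying that the parity-property machinery of type $(l,m)$ preserves this inversion-splitting structure for general $l$ and $m$.
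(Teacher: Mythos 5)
Your proposal is sound, and the bijectivity part follows the paper's own route (the paper proves Propositions \ref{prop:DCNDP}, \ref{prop:uniNDP} and \ref{prop:tabGDC} and assembles them exactly as you describe: $\phi$ lands in the normalized Dumont permutations, $\mathrm{dStd}^l$ is injective on permutations satisfying the third condition, and the explicit map $\psi$ reverses the construction). Where you genuinely diverge is the statistics identity. The paper does \emph{not} track individual inversions through $\phi$; instead it verifies Eqn. \eqref{eqn:st11} by brute-force summation on the single extremal configuration $C_1$ (computing $\mathrm{inv}(\pi^e)$, $\mathrm{inv}(\pi^o)$ and $\sum_{p\,\mathrm{odd}}\sum_q\pi(pl+q)$ in closed form), and then propagates it to every $C$ by elementary switches: Proposition \ref{prop:eswitch} gives $\mathrm{inv}(\eta_i(C))=\mathrm{inv}(C)-1$, and a three-case local analysis (switch below, at, or above the middle row) shows $\mathrm{st}(\phi(\eta_i(C)))=\mathrm{st}(\phi(C))+1$, so the identity transports along any switch path from $C_1$. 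Your direct decomposition — cross-half inversions versus same-half inversions — is in fact the content of the paper's later Theorem \ref{thrm:Dyckinv}, $\mathrm{inv}(C)=\mathrm{Area}(\phi^{e}_{<})+\mathrm{inv}(\phi^{e})+\mathrm{inv}(\phi^{o})$; to finish your route you would need the additional elementary identity $\mathrm{Area}(\phi^{e}_{<})=\sum_i\pi^{e}_i-\genfrac{(}{)}{0pt}{}{L/2+1}{2}$, which together with $\genfrac{(}{)}{0pt}{}{L/2}{2}-L^2/4=-\genfrac{(}{)}{0pt}{}{L/2+1}{2}$ makes \eqref{eqn:st11} equivalent to the Dyck-path decomposition. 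So your approach buys a sharper structural statement and avoids the messy extremal computation, at the cost of the inversion bookkeeping you flag; the paper's switch induction keeps all verification local to a single adjacent pair of dots. One small correction: the odd labels $e_i=2i-1$ are attached to the \emph{upper} half of the rows and the even labels $2i+n(m-1)+2$ to the lower half, so the blocks of $\pi$ with $p$ odd (i.e.\ $\pi^{e}$) track the lower-half (even-labeled) dots — your parenthetical inverts this, though it does not affect the viability of the argument.
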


\begin{remark}
The map $\phi$ in Theorem \ref{thrm:varphi} is defined in Section \ref{def:phi}, 
and the map $\mathrm{dStd}^{l}$ is defined in Section \ref{sec:GD}.
We define the map $\phi_{3}:\mathfrak{S}_{L}^{l}\rightarrow\mathfrak{S}_{L}^{l}$, 
$\alpha:=\alpha_{1}\ldots \alpha_{L}\mapsto \beta=\beta_{1}\ldots\beta_{L}$
as follows.
Recall that an integer $1\le k\le L/l$ appears exactly $l$ times in $\alpha$.
Suppose that $\alpha_{i_1}=\alpha_{i_2}=\ldots,=\alpha_{i_l}=k$ and 
$i_1<i_2<\ldots<i_{l}$.
Then, we define $\beta_{l(k-1)+p}:=\lceil i_{p}/l\rceil$ for $1\le p\le l$.
We have $\varphi=\phi_{3}\circ\phi_{1}$.
Equivalently, we have $\phi_3=\mathrm{dStd}^{l}\circ\phi_2$.
\end{remark}

Before proceeding to the proof of Theorem \ref{thrm:varphi},
we construct the bijection $\varphi$ in the following three 
propositions.

\begin{prop}
\label{prop:DCNDP}
For all $C\in\mathrm{DC}_{l,m,n}$, the generalized permutation 
$\phi_{3}\circ\phi_{1}(C)$ is a normalized Dumont permutation 
of type $(l,m)$.
\end{prop}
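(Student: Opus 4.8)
The goal is to verify that $\sigma := \phi_3\circ\phi_1(C)$ satisfies all three defining conditions of a normalized Dumont permutation (Definition~\ref{def:NDP}). The plan is to unwind the two maps carefully and check the conditions one by one, treating the cases $mn$ even and $mn$ odd in parallel. The key observation is that $\phi_1(C)=w_1\ast\mathrm{word}(C)\ast w_2$ has a rigid block structure: the prefix $w_1$ consists of the small even labels $2^l,4^l,\ldots$, the suffix $w_2$ consists of the large odd labels, and $\mathrm{word}(C)$ carries the labels $e_i$ assigned to the rows of $C$. Because the labeling in Cases A and B was designed so that the labels $e_i$ are precisely the integers $1,\ldots,L/l$ (with the correct parities), the map $\phi_3$ then records, for each value $k$, the ceiling of the positions where $k$ occurs in $\phi_1(C)$.

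First I would show that $\sigma\in\mathfrak{D}_L^l$, i.e.\ the generalized Dumont condition $\sigma(pl+q)>p+1$ for $p$ even and $\sigma(pl+q)<p+1$ for $p$ odd. This should follow from the dot-placement constraint $\lceil j/l\rceil\le i\le\lceil j/l\rceil+(m-1)n$ in Definition~\ref{defn:GDC}, translated through the label assignment: a dot in row $i$ receives an even label in the lower half and an odd label in the upper half, and the row-to-column inequality forces the position of each label into the correct half of the word. The prefix/suffix words $w_1,w_2$ supply exactly the ``missing'' even/odd values so that every $k\in[L/l]$ occurs $l$ times. Next I would verify the first two bullets of Definition~\ref{def:NDP}: the values $\sigma(pl+q)=(p+1)/2$ for the prescribed small odd $p$, and $\sigma(pl+q)=(L/l+p)/2+1$ for the prescribed large even $p$. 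These should be direct computations using that $w_1$ places the smallest even labels in the leftmost columns and $w_2$ places the largest odd labels in the rightmost columns, so the positions of those extreme values are completely determined and independent of the interior of $C$.

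The substantive part, and the step I expect to be the main obstacle, is the third condition: showing that each $\pi'_p$ satisfies the parity property of type $(l,m)$ (Definition~\ref{defn:ppm}), together with the weak-increase condition on the length-$l$ blocks $\pi((p-1)l+1),\ldots,\pi(pl)$. Here the content is genuinely about the geometry of $C$. I would argue that since each column of $C$ contains exactly $m$ dots, the $m$ labels read off a single column (after passing through $\phi_3$) form the relevant length-$m$ window $\pi'_p$, and the column constraint $\lceil j/l\rceil\le i\le\lceil j/l\rceil+(m-1)n$ forces the parity vector $\rho$ of that window to be non-decreasing (no $(1,0)$ descent), which is exactly the first bullet of the parity property. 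The remaining two bullets—strict monotonicity within a constant-parity window, and the interleaving inequality $\sigma_{k+1}<\cdots<\sigma_m<\sigma_1<\cdots<\sigma_k$ when the parities split as $0^k1^{m-k}$—should reduce to the fact that within a single column the dots occupy distinct rows, and that the bottom-to-top, left-to-right reading order used to build $\mathrm{word}(C)$ orders equal-parity labels consistently. The weak-increase of the $l$-blocks follows because the $l$ copies of each label $k$ are emitted left-to-right across the $l$ columns of a size-$n$ block, so their positions—and hence the block of $\phi_3$-values—are weakly increasing by construction.

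I would organize the write-up by first fixing notation for the positions of each label $k$ in $\phi_1(C)$, then proving the generalized Dumont and the two ``boundary'' conditions as short position computations, and finally devoting the bulk of the argument to the parity property, handling the even/odd $mn$ cases by the uniform device of reading one column at a time. The cleanest way to make the third condition transparent is to note, as the second remark after Definition~\ref{def:NDP} already suggests, that a type-$(l,m)$ configuration decomposes into $ln$ type-$(1,m)$ strips; I would verify the parity property on each strip and then observe that the decomposition is compatible with $\phi_3$, so the global $\pi'_p$ inherit the property from the individual columns.
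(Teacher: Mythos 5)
Your overall route is the same as the paper's: the first two conditions of Definition \ref{def:NDP} come for free from the rigid structure of $w_1$ and $w_2$, the weak increase of the length-$l$ blocks is immediate from the definition of $\phi_3$, and the parity property is verified column by column, with the window $\pi'_p$ encoding the labels of the $m$ dots in one column read bottom to top. The gap is in the one step that carries all the content. You spend the positional constraint $\lceil j/l\rceil\le i\le\lceil j/l\rceil+(m-1)n$ on the wrong bullet of Definition \ref{defn:ppm}: the no-descent condition on $\rho$ is automatic and has nothing to do with that constraint, because lower-half rows carry even labels, upper-half rows carry odd labels, and a column is read bottom to top, so $\rho$ has the form $0^k1^{m-k}$ for \emph{any} placement of dots. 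Where the constraint is genuinely indispensable is the interleaving inequality $\sigma_{k+1}<\cdots<\sigma_m<\sigma_1<\cdots<\sigma_k$, and there your justification fails: ``dots occupy distinct rows'' and ``the reading order is consistent'' only give monotonicity \emph{within} each parity class; the inequality additionally asserts the cross-parity statement that every odd label occurring in a column is smaller than every even label occurring in that same column, and nothing you cite implies it.

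That cross-parity statement is false without the positional constraint. Take the classical case $(l,m,n)=(1,2,3)$, where (following the convention of Figure \ref{ex:GDCLabel}) rows $1,2,3$ from the bottom are labeled $4,6,8$ and rows $4,5,6$ are labeled $1,3,5$. A column containing dots in rows $1$ and $6$ would produce the window with labels $(4,5)$, parities $(0,1)$, and the interleaving would demand $5<4$; such a column is excluded precisely by $i\le\lceil j/l\rceil+(m-1)n$, not by distinctness of rows. This is exactly what the paper's proof supplies: in a column $j$ with $c_j=\lceil j/l\rceil$, the highest dot lies in row at most $c_j+(m-1)n$, so its odd label is at most $2c_j+n(m-2)-1$, while the lowest dot lies in row at least $c_j$, so its even label is at least $2c_j+n(m-2)+2$; hence the top label is smaller than the bottom label and the column's labels form the cyclic chain of Eqn. (\ref{eqn:pplm}), which is the parity property of $\pi'_p$. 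To repair your write-up you must insert this label-bound computation for every column (the paper, incidentally, only writes it out for the leftmost column, so doing it uniformly in $c_j$ would actually improve on the published argument); the rest of your outline can stand as is.
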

\begin{proof}
Let $\pi:=\phi_{3}\circ\phi_{1}(C)$ for $C\in\mathrm{DC}_{l,m,n}$.
From Definition \ref{def:NDP}, we have three conditions which 
characterize a normalized Dumont permutation of type $(l,m)$.
It is obvious that the first and second conditions are satisfied 
by $\pi$. 
In fact, given $C\in\mathrm{DC}_{l,m,n}$, we concatenate three words
$w_1$, $\mathrm{word}(C)$ and $w_2$ into a generalized 
permutation in $\mathfrak{S}_{L}^{l}$.
The two words $w_1$ and $w_2$ satisfy the two conditions when we 
construct $\pi$.

We show that $\pi$ satisfy the third condition, or equivalently,
the parity property of type $(l,m)$.
Let $c$ be the left-most column in the generalized Dellac configuration 
$C$.
By definition of generalized Dellac configurations, we can not put 
a dot in the $i$-th row from top for $1\le i\le n-1$ in $c$.
When $mn$ even (resp. odd), the label of the highest dot is 
less than or equal to $(m-2)n+1$ (resp. $(m-2)n+2$).
On the other hand, the label of the lowest dot in $c$ is 
$(m-2)n+4$ (resp. $(m-2)n+3$) for $mn$ even (resp. odd).
Let $l(i)$, $1\le i\le m$, be the label of the $i$-th dot in $c$ 
from bottom to top.
Since we have $l(m)<l(1)$, there exists a unique $k$, $1\le k\le m$,
such that 
\begin{align}
\label{eqn:pplm}
l(k+1)<l(k+2)<\cdots<l(m)<l(1)<l(2)<\cdots<l(k).
\end{align}
Since $\pi$ is obtained by maps $\phi_{3}\circ\phi_{1}$, 
the condition (\ref{eqn:pplm}) implies that 
$\pi$ satisfies the parity property of type $(l,m)$.
The condition that 
\begin{align*}
\pi((p-1)l+1), \pi((p-1)l+2), \ldots, \pi(pl),
\end{align*} 
for $1\le p\le L/l$ are weakly increasing is 
obviously satisfied by the definition of $\phi_{3}$.

Since $\pi$ satisfies the all conditions, 
$\pi$ is a normalized Dumont permutation of 
type $(l,m)$.
\end{proof}

\begin{prop}
\label{prop:DCst}
Let $C\in\mathrm{DC}_{l,m,n}$. 
Then, we have 
\begin{align}
\label{eqn:stinv}
\mathrm{st}(\phi(C))=\genfrac{(}{)}{0pt}{}{L/2}{2}-\mathrm{inv}(C).
\end{align}
\end{prop}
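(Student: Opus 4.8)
The plan is to evaluate the three terms of $\mathrm{st}(\phi(C))$ directly on the word $\alpha:=\phi_1(C)=w_1\ast\mathrm{word}(C)\ast w_2$ and to match them against the decomposition of $\mathrm{inv}(C)$ coming from the splitting of the rows of $C$ into a lower half and an upper half.

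First I would record the meaning of $\phi_2$. Since $\phi_2(\alpha)=(\mathrm{Std}(\alpha))^{-1}$, the permutation $\pi:=\phi(C)$ lists, for each value $k$ of $\alpha$ in increasing order, the $l$ positions of $k$ in $\alpha$ in increasing order. Reading off the definition of $\mathrm{st}$, the block of $\pi$ indexed by $p$ corresponds to the value $k=p+1$ of $\alpha$, so even $p$ (resp.\ odd $p$) selects exactly the odd (resp.\ even) values of $\alpha$. This lets me reinterpret the three terms: $\mathrm{inv}(\pi^{o})$ is the number of inversions of the word $\alpha$ between two odd-valued entries, $\mathrm{inv}(\pi^{e})$ is the number of inversions between two even-valued entries, and $S:=\sum_{p\ \mathrm{odd}}\sum_{1\le q\le l}\pi(pl+q)$ is the sum over all even-valued entries of their position in $\alpha$.

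Next I would use the labelling of Section \ref{sec:GDC} to align parity with the row split. The even values are exactly those in $w_1$ together with the labels of the lower half of the rows, and the odd values are those in $w_2$ together with the labels of the upper half. Because $w_1$ consists of the smallest even values in increasing order placed at the front, it produces no inversion with itself nor with any later even entry, and symmetrically $w_2$ produces no odd--odd inversion; moreover $\mathrm{word}(C)$ is read column by column and the labels increase with the row index within each half. Hence $\mathrm{inv}(\pi^{e})$ equals the number $A$ of inversions of $C$ between two lower-half dots and $\mathrm{inv}(\pi^{o})$ equals the number $B$ of inversions between two upper-half dots. Writing $D$ for the number of inversions of $C$ between a lower-half dot and an upper-half dot, this gives $\mathrm{inv}(\pi^{o})+\mathrm{inv}(\pi^{e})=\mathrm{inv}(C)-D$. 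For $S$, writing the position of an entry as one plus the number of entries preceding it, I split preceding entries by parity and obtain $S=\tfrac{L}{2}+\binom{L/2}{2}+N$, where $\tfrac{L}{2}$ is the number of even entries, $\binom{L/2}{2}$ counts ordered pairs of even entries, and $N$ is the number of pairs in which an odd entry precedes an even entry. The crux is the identity $N=D$: an odd entry of $w_2$ is followed by no even entry, so only an upper-half dot $y$ followed (in $\alpha$) by a lower-half dot $x$ contributes, and by the column-major reading this happens exactly when the column of $y$ is strictly smaller than that of $x$, which is precisely the condition defining a lower--upper inversion of $C$. Combining everything, $\mathrm{st}(\phi(C))=\tfrac{L^2}{4}-S-(A+B)=\tfrac{L^2}{4}-\tfrac{L}{2}-\binom{L/2}{2}-\mathrm{inv}(C)$, and since $\tfrac{L^2}{4}-\tfrac{L}{2}-\binom{L/2}{2}=\binom{L/2}{2}$ the claim follows, the $D$-terms cancelling.

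The main obstacle is the parity bookkeeping, since the concrete parity of a label depends on $n$, $m$ and on whether $mn$ is even or odd, so I must verify, in both Case A and Case B, that the even values are exactly $w_1$ together with the lower rows (and dually for the odd values), that $\mathrm{word}(C)$ uses $\lfloor mn/2\rfloor$ lower rows matched correctly with its labels, and that $w_1$ and $w_2$ contribute no same-parity inversions; one also checks $L/2$ is an integer in both cases. Once this alignment is secured, the identification of $A,B$ with same-parity inversions, the identity $N=D$, and the final arithmetic collapse are routine, and one may cross-check the whole chain on the running example $(l,m,n)=(2,3,2)$, where $S=69$, $A=2$, $B=3$, $D=14$, and $\mathrm{st}(\phi(C))=100-69-5=26=\binom{10}{2}-19$.
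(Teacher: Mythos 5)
Your proof is correct, but it follows a genuinely different route from the paper's. The paper proves Proposition \ref{prop:DCst} by a base-case-plus-induction argument: it first verifies Eqn.~(\ref{eqn:stinv}) for the highest configuration $C_{1}$ through an explicit evaluation of $\mathrm{inv}(\pi^{o})$, $\mathrm{inv}(\pi^{e})$ and the position sum (carried out in detail only for one parity case, the others being declared similar), and then propagates the identity to arbitrary $C$ by showing that an elementary switch $\eta_{i}$ increases $\mathrm{st}(\phi(C))$ by $1$ while, by Proposition \ref{prop:eswitch}, it decreases $\mathrm{inv}(C)$ by $1$; this step needs a three-case local analysis according to whether the switch occurs below, at, or above the middle row. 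You instead argue directly and globally: identifying $\pi=\phi(C)$ with $(\mathrm{Std}(\alpha))^{-1}$ for $\alpha=\phi_{1}(C)$, you read off $\mathrm{inv}(\pi^{e})$, $\mathrm{inv}(\pi^{o})$ and the position sum $S$ as statistics of the word $\alpha$, split $\mathrm{inv}(C)=A+B+D$ by the lower/upper half of the rows, and match terms, the crux being $N=D$ together with the identity $L^{2}/4-L/2-\binom{L/2}{2}=\binom{L/2}{2}$; your numerical cross-check on the running example agrees with the paper's values. Notably, your decomposition is in substance the one the paper itself uses later to prove Theorem \ref{thrm:Dyckinv}, where the mixed count $D$ reappears as $\mathrm{Area}(\phi^{e}_{<})$, so your argument effectively unifies Proposition \ref{prop:DCst} with that theorem and dispenses with both the $C_{1}$ computation and the switch analysis. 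What the paper's route buys is locality: the switch lemma exhibits $\mathrm{st}$ as graded along elementary moves, a fact of independent use. What your route buys is economy and transparency: every term of $\mathrm{st}$ acquires a direct combinatorial meaning, at the cost of the parity bookkeeping you correctly flag (checking in Cases A and B that the even values are exactly $w_{1}$ together with the lower-half labels, that each parity class has $L/2$ entries, and that $w_{1}$ and $w_{2}$ contribute no same-parity inversions), which replaces, and is arguably lighter than, the paper's base-case and case-by-case computations.
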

\begin{proof}
We first show that Eqn. (\ref{eqn:stinv}) holds for $C=C_1$, where
$C_{1}$ is the highest configuration defined in Section \ref{sec:GDC}.
Since $L$ is explicitly given by $(l,m,n)$, the right hand side of Eqn. (\ref{eqn:stinv}) 
can be calculated easily by use of Eqn. (\ref{eqn:invhighest}).

We compute the left hand side of Eqn. (\ref{eqn:stinv}) for $C_{1}$.
We consider the $m$ odd and $n$ even case, since calculations for
other cases are similar.
We first compute $\mathrm{inv}(\pi^{e})$ and $\mathrm{inv}(\pi^{o})$.
The number $\mathrm{inv}(\pi^{e})$ is the number of inversions 
among the dots from bottom to the $mn/2$-th row.
Since $C_1$ is symmetric under the rotation, we have 
$\mathrm{inv}(\pi^e)=\mathrm{inv}(\pi^{o})$.
In $C_1$, the dots in from the bottom row to the $n$-th row have no
inversions. 
The value $\mathrm{inv}(\pi^e)$ is the sum of the following two values.
The first one is the sum of inversions among the $il+j$-th columns 
where $i\in[n/2,n-1]$ is fixed and $1\le j\le l$.
The second one is the sum of inversions of dots where 
one dot is in the $i_1l+j_1$-th column and the other dot is in the 
$i_2l+j_2$-th column where $i_1$ and $i_2$ are distinct in $[n/2,n-1]$
and $j_1,j_2\in[1,l]$.
Then, by a straightforward computation, we have 
\begin{align}
\mathrm{inv}(\pi^e)
=\genfrac{}{}{1pt}{}{n}{2}\genfrac{(}{)}{0pt}{}{m-1}{2}\genfrac{(}{)}{0pt}{}{l}{2}
+l^{2}(m-1)(m-2)\genfrac{(}{)}{0pt}{}{n/2}{2}.
\end{align}

We compute the second contribution in Eqn. (\ref{eqn:defst}).
When we construct a normalized Dumont permutation from a 
generalized Dellac configuration $C_1$, we attach a word $w_1$
consisting of even integers.  
Since we consider $\pi(pl+q)$ such that $p$ is odd and $1\le q\le l$,
$w_1$ induces a simple sequence $(1,2,\ldots ,\alpha-1)$ 
in $\pi$ where $\alpha=(n(m-2)/2+1)l+1$.

In $C_1$, we have $l$ dots in the $i$-th row with $1\le i\le n$.
By definition of the word $\mathrm{word}(C_1)$, the left bottom dot 
induces $\alpha$ in $\pi$.
In $i$-th row with $1\le i\le mn/2$, $l$ dots are next to each other and 
they induce a sequence in $\pi$: 
\begin{align*}
(\alpha+(k-1)ml,\alpha+((k-1)l+1)m,\ldots,\alpha+(kl-1)m),
\end{align*}
in the $k$-th row with $1\le k\le n$.

Let $p\in[1,m-2]$ be an integer such that $p\equiv k-n\pmod{m-2}$.
The dots in the $k$-th row and $q$-th column with $n+1\le k\le mn/2$ and 
$nl/2\le q\le nl$ induce a sequence in $\pi$ of the form
\begin{align*}
(\alpha+(k'-1)ml+p,\alpha+((k'-1)l+1)m+p,\ldots,\alpha+(k'l-1)m+p),
\end{align*}
where $k':=\lceil q/l\rceil$.

By taking a sum of these induced integers in $\pi$,
we have 
\begin{align*}
\sum_{p:odd}\sum_{1\le q\le l}\pi(pl+q)
&=\sum_{i=1}^{\alpha-1}i
+\sum_{k=1}^{n/2}\left(\alpha l-m\genfrac{(}{)}{0pt}{}{l+1}{2}+ml^{2}k\right) \\
&+\sum_{k=n/2+1}^{n}\left(\alpha l-m\genfrac{(}{)}{0pt}{}{l+1}{2}+ml^{2}k\right)(m-1)
+\genfrac{}{}{1pt}{}{nl}{2}\sum_{i=1}^{m-2}i, \\
&=
\genfrac{(}{)}{0pt}{}{\alpha}{2}
+\genfrac{}{}{1pt}{}{nm}{2}\left(\alpha l-m\genfrac{(}{)}{0pt}{}{l+1}{2}\right)
+ml^{2}\genfrac{(}{)}{0pt}{}{n/2+1}{2} \\
&+\genfrac{}{}{1pt}{}{nl^2}{2}\left(\genfrac{}{}{1pt}{}{3n}{2}+1\right)\genfrac{(}{)}{0pt}{}{m}{2}
+\genfrac{}{}{1pt}{}{nl}{2}\genfrac{(}{)}{0pt}{}{m-2}{2}.
\end{align*}

Substituting these expressions into Eqn. (\ref{eqn:defst}), we obtain that the left hand side of 
Eqn. (\ref{eqn:stinv}) is equal to the right hand side by a straightforward computation.

To prove Eqn. (\ref{eqn:stinv}) for a general configuration $C$,
it is enough to prove that 
\begin{align}
\label{eqn:st1}
\mathrm{st}(\phi(\eta_{i}(C)))=\mathrm{st}(\phi(C))+1,
\end{align}
for an elementary switch $\eta_{i}$, 
since from Proposition \ref{prop:eswitch} we have 
$\mathrm{inv}(\eta_{i}(C))=\mathrm{inv}(C)-1$.

Let $d_1$  and $d_2$ be two dots whose coordinate is $(i,j_1)$
and $(i+1,j_2)$ with $j_1<j_2$ respectively.
We assume that there exists $p$ dots below $d_2$ and $q$ dots 
above $d_1$.
Further, there exist no dots in the coordinates $(i,j+1)$ and 
$(i+1,j)$.
We have three cases: a) $1\le i\le mn/2-1$, b) $i=mn/2$, and 
c) $nm/2+1\le i\le mn-1$.
Let $\pi:=\phi(C)$ and $\pi':=\phi(\eta_{i}(C))$, 
and $A(\pi):=\sum_{r: odd}\sum_{1\le s\le l}\pi(rl+s)$.
Since the proof of c) is similar to that of a) by symmetry, 
we consider only cases a) and b).

Case a). Since $i\le mn/2-1$, we have 
\begin{align*}
\mathrm{inv}(\pi'^o)&=\mathrm{inv}(\pi^{o}), \\
\mathrm{inv}(\pi'^e)&=\mathrm{inv}(\pi^{e})+1, \\
A(\pi')&:=A(\pi).
\end{align*}
Thus, we have Eqn. (\ref{eqn:st1}).

Case b). Since $i=mn/2$, the numbers $\pi'^o$ and $\pi'^e$ of inversions
are different from those for $\pi^o$ and $\pi^{e}$. 
Similarly, $A(\pi')$ is also different from $A(\pi)$.
We have 
\begin{align*}
\mathrm{inv}(\pi'^o)&=\mathrm{inv}(\pi^{o})+q, \\
\mathrm{inv}(\pi'^e)&=\mathrm{inv}(\pi^{e})+p, \\
A(\pi')&=A(\pi)-(p+q+1),
\end{align*}
which implies Eqn. (\ref{eqn:st1}).
This completes the proof.
\end{proof}

We construct a map $\psi$, which is the inverse of the map $\varphi$.
\begin{defn}
The map $\psi:\mathfrak{D'}_{L}^{l,m}\rightarrow\mathrm{DC}_{l,m,n}$, 
$\sigma\mapsto T$, is defined as follows.
Let $\pi$ be a permutation such that $\sigma=\mathrm{dStd}^{l}(\pi)$ 
as in Definition \ref{def:NDP}.
Let $p\in\{0,1,\ldots,n-1\}$ and $q\in\{1,2,\ldots,l\}$. 
The $(pl+q)$-th column contains $m$ dots labeled by 
$\lceil\pi^{-1}(r+(pl+q-1)m+k)/l\rceil$, $1\le k\le m$, 
where $r=l(n(m-2)/2+1)$ for $mn$ even and $r=l(n(m-2)/2+1/2)$ for 
$mn$ odd.
\end{defn}

\begin{prop}
\label{prop:tabGDC}
The tableau $\psi(\sigma)$, $\sigma\in\mathfrak{D'}_{L}^{l,m}$, is 
a generalized Dellac configuration.
\end{prop}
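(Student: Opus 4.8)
The plan is to verify directly the three defining conditions of a generalized Dellac configuration in Definition \ref{defn:GDC} for the tableau $T=\psi(\sigma)$, reading off each condition from the three conditions of Definition \ref{def:NDP} that characterize $\sigma\in\mathfrak{D'}_{L}^{l,m}$. Throughout, let $\pi$ be the permutation with $\sigma=\mathrm{dStd}^{l}(\pi)$; it is unique by Proposition \ref{prop:uniNDP}, so $T$ is well defined. It is convenient to record the whole label assignment at once by setting $\alpha_{v}:=\lceil\pi^{-1}(v)/l\rceil$ for $1\le v\le L$. Since $\pi^{-1}$ is a bijection, $\alpha\in\mathfrak{S}_{L}^{l}$, i.e. each label in $\{1,\ldots,L/l\}$ occurs exactly $l$ times in $\alpha$. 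Unwinding the definition of $\psi$, the $m$ labels placed in the $j$-th column of $T$, read from bottom to top, are precisely the $m$ consecutive entries $\alpha_{r+(j-1)m+1},\ldots,\alpha_{r+(j-1)m+m}$, where $j=pl+q$ ranges over $\{1,\ldots,nl\}$. In this way the three blocks $w_{1},\mathrm{word},w_{2}$ of the word $\phi_{1}$ are mirrored by the splitting of $\alpha$ into its first $r$ entries, its middle $nlm$ entries, and its last $L-r-nlm$ entries.

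First I would pin down the two boundary blocks of $\alpha$ from the first two conditions of Definition \ref{def:NDP}. For $mn$ even and odd $p$, the condition $\sigma(pl+q)=(p+1)/2$ forces $\pi$ to map the position block $\{pl+1,\ldots,(p+1)l\}$ bijectively onto the value block $\{\tfrac{p-1}{2}l+1,\ldots,\tfrac{p+1}{2}l\}$; equivalently, these values sit in positions of the $(p+1)$-st block, so $\alpha$ is constantly $p+1$ on them. Letting $p$ run through the odd integers $1,3,\ldots,n(m-2)+1$ produces exactly $\alpha_{1}\cdots\alpha_{r}=2^{l}4^{l}\cdots(n(m-2)+2)^{l}=w_{1}$, and symmetrically the second condition gives that the last $L-r-nlm$ entries of $\alpha$ equal $w_{2}$ (the case $mn$ odd is analogous). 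Consequently the middle $nlm$ entries of $\alpha$ use precisely the $mn$ labels absent from $w_{1}$ and $w_{2}$, each exactly $l$ times; these are the $mn$ admissible row labels of Case A / Case B in Section \ref{sec:GDC}. This identifies the multiset of all column labels of $T$ with ``each of the $mn$ rows taken $l$ times''.

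From here the row-count and column-count conditions both reduce to a single statement. By the boundary step every label placed by $\psi$ is a genuine row label, and the global multiset is correct; hence ``each column contains $m$ dots'' and ``each row contains $l$ dots'' will follow at once provided the $m$ labels of each individual column are pairwise distinct, i.e. no row is used twice in one column. Distinctness is therefore the one remaining combinatorial input for the counting conditions, and I plan to obtain it simultaneously with the band estimate below, since both are controlled by the same data.

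Finally, the staircase band condition — if a dot lies in box $(i,j)$ then $\lceil j/l\rceil\le i\le\lceil j/l\rceil+(m-1)n$ — is where I expect the main difficulty. Here one must control not merely the parity pattern but the actual magnitudes of $x_{i}:=\pi^{-1}(r+(j-1)m+i)$, since $\lceil x_{i}/l\rceil$ is the label, hence (through the inverse of the Case A / B labelling) the row, of the $i$-th dot of column $j$. The tuple $(x_{1},\ldots,x_{m})$ is exactly the sequence $\pi'_{j-1}$ to which the parity property of type $(l,m)$ (Definition \ref{defn:ppm}) applies: its third bullet splits the entries into an even-label group and an odd-label group — the lower and upper halves of $T$ — and imposes the strict chain $x_{k+1}<\cdots<x_{m}<x_{1}<\cdots<x_{k}$. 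My plan is to combine this chain with the weakly increasing condition on the blocks $\pi((p-1)l+1),\ldots,\pi(pl)$ to show that, within a fixed column, the lower-half dots occupy strictly increasing rows and the upper-half dots occupy strictly increasing rows — which yields distinctness — and that all these rows lie in the interval $[\lceil j/l\rceil,\ \lceil j/l\rceil+(m-1)n]$ — which is the band. I would organize the band estimate as a monotonicity statement in $j$: the first two conditions fix the endpoints of the range of middle positions, the chain inequalities applied uniformly across $p=0,1,\ldots,nl-1$ propagate the bound through a single-column increment, and the two extreme columns can be cross-checked against the lowest and highest configurations $C_{0},C_{1}$ of Section \ref{sec:GDC}, whose bands are tight. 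The principal obstacle is the quantitative bookkeeping of these inequalities rather than any single conceptual step; the parity property is precisely the hypothesis engineered so that the band comes out exactly right.
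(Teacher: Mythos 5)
There is a genuine gap, and it is in the one step that carries all the content: the band condition. You announce that you will read the three Dellac conditions off ``the three conditions of Definition \ref{def:NDP}'', and your band step is built solely on the parity property, the weakly increasing blocks, and conditions (1)--(2). But Definition \ref{def:NDP} begins ``a generalized Dumont permutation $\sigma\in\mathfrak{D}_{L}^{l}$ such that\dots'': membership in $\mathfrak{D}_{L}^{l}$, i.e.\ the inequalities $\sigma(pl+q)>p+1$ for $p$ even and $\sigma(pl+q)<p+1$ for $p$ odd, is an extra hypothesis, and it is precisely this hypothesis --- not the parity property --- that encodes the band $\lceil j/l\rceil\le i\le\lceil j/l\rceil+(m-1)n$. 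In your notation $\alpha_{t}=\lceil\pi^{-1}(t)/l\rceil$, the Dumont inequalities say exactly that an even label $c$ may occur only at positions $t\le(c-1)l$ and an odd label $c$ only at positions $t\ge cl+1$; substituting $t=r+(j-1)m+k$ and the Case A/B dictionary between labels and rows is what produces the two halves of the band. The parity property, by contrast, is column-local and blind to translation of the values: it constrains only the relative (cyclic) order of the $m$ slots inside one block, so it cannot bound which rows appear in which column. Concretely, take $(l,m,n)=(1,2,2)$, $L=6$, and the word $\alpha=2,6,3,4,1,5$, i.e.\ $\pi=(5,1,3,4,6,2)$: conditions (1) and (2) hold, and both $\pi'_{0}=(6,3)$ and $\pi'_{1}=(4,1)$ satisfy the parity property (the weak-increase condition is vacuous for $l=1$), yet $\psi$ places the dot with label $4$, i.e.\ row $1$, in column $2$, violating $\lceil j/l\rceil\le i$; the only hypothesis this word breaks is the Dumont inequality $\sigma(4)<4$. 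So a band argument along your stated lines cannot close, and your closing claim that ``the parity property is precisely the hypothesis engineered so that the band comes out exactly right'' misidentifies the mechanism: the parity property is what makes the $m$ consecutive slots readable as a single column (lower-half labels increasing, then upper-half labels increasing, matching the bottom-to-top reading); the band is the Dumont condition.

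Two secondary points. First, your step 3 is a plan rather than an argument: ``monotonicity in $j$'', ``quantitative bookkeeping'', and ``cross-check against $C_{0},C_{1}$'' are placeholders, and no inequality is actually propagated. Second, your distinctness claim does not follow as asserted: the strict chain in Definition \ref{defn:ppm} is imposed on the slot values $\pi^{-1}(\cdot)$, which are automatically distinct because $\pi$ is a bijection, so for $l\ge2$ two occurrences of the \emph{same} label (same row) inside one column block are compatible with the chain and with the weak-increase condition; distinctness of rows within a column needs a separate argument, while the row-count condition needs no distinctness at all (it is the multiset count from your step 1, which is correct). For comparison, the paper's own proof is a two-line assertion that $\psi$ is ``compatible with Definition \ref{def:NDP}''; your steps 1--2 (pinning the boundary blocks to $w_{1},w_{2}$ via conditions (1)--(2) and identifying the multiset of row labels) are genuinely more explicit than the paper, but the substantive content any complete proof must supply is the translation of the Dumont inequalities into the band, which the paper leaves implicit and your plan omits entirely.
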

\begin{proof}
Recall the definition of normalized Dumont permutations 
in Definition \ref{def:NDP}.
Since $\sigma\in\mathfrak{D'}_{L}^{l,m}$ and $\pi$ is a permutation
such that $\sigma=\mathrm{dStd}^{l}(\pi)$, $\pi$ satisfies the parity 
property of type $(l,m)$.
The given map $\psi$ is compatible with Definition \ref{def:NDP},
which implies $\psi(\sigma)$ is a generalized Dellac configuration
for $(l,m,n)$.
\end{proof}

\begin{proof}[Proof of Theorem \ref{thrm:varphi}]
From Proposition \ref{prop:DCNDP} and Proposition \ref{prop:DCst},
the map $\psi$ is a map from a generalized Dellac configuration 
to a normalized permutation satisfying Eqn. (\ref{eqn:st11}).
Proposition \ref{prop:tabGDC} gives the inverse of $\psi$, which 
implies that $\psi$ is bijective.
\end{proof}

\begin{example}
We consider the same normalized Dumont permutation as Example \ref{ex:NDP}.
Let $\sigma$ and $\pi$ be generalized permutations in Example \ref{ex:NDP}.
From Proposition \ref{prop:tabGDC}, the corresponding generalized Dellac 
configuration is in Figure \ref{fig:tabGCD}.
\begin{figure}[ht]
$C=$\tikzpic{-0.5}{[scale=0.5]
\draw(0,0)--(0,9)(1,0)--(1,9)(2,0)--(2,9)(3,0)--(3,9)(4,0)--(4,9)(5,0)--(5,9)(6,0)--(6,9);
\draw(0,0)--(6,0)(0,1)--(6,1)(0,2)--(6,2)(0,3)--(6,3)(0,4)--(6,4)(0,5)--(6,5)(0,6)--(6,6)(0,7)--(6,7)
(0,8)--(6,8)(0,9)--(6,9);
\draw(1/2,1/2)node{$\bullet$}(1/2,7/2)node{$\bullet$}(1/2,13/2)node{$\bullet$}
(3/2,1/2)node{$\bullet$}(3/2,3/2)node{$\bullet$}(3/2,9/2)node{$\bullet$}
(5/2,5/2)node{$\bullet$}(5/2,7/2)node{$\bullet$}(5/2,11/2)node{$\bullet$}
(7/2,3/2)node{$\bullet$}(7/2,9/2)node{$\bullet$}(7/2,13/2)node{$\bullet$}
(9/2,5/2)node{$\bullet$}(9/2,15/2)node{$\bullet$}(9/2,17/2)node{$\bullet$}
(11/2,11/2)node{$\bullet$}(11/2,15/2)node{$\bullet$}(11/2,17/2)node{$\bullet$};
}
\caption{A generalized Dellac configuration in $\mathrm{DC}_{2,3,3}$}
\label{fig:tabGCD}
\end{figure}
Since the inversion number is $31$, we have 
\begin{align*}
\genfrac{(}{)}{0pt}{}{L/2}{2}-\mathrm{inv}(C)=\genfrac{(}{)}{0pt}{}{12}{2}-31=35,
\end{align*}
which is equal to $\mathrm{st}(\sigma)$ (see Example \ref{ex:NDP2}).
\end{example}

\section{Characterization of \texorpdfstring{$\phi$}{phi} and Dyck paths}
\label{sec:phiDyck}
\subsection{Dyck paths and inversions}
A {\it Dyck} path of length $2N$ is a lattice path from 
the origin $p_{0}:=(0,0)$ to $p_{2N}:=(2N,0)$ with 
up steps $(1,1)$ and down steps $(1,-1)$, which 
does not go below the horizontal line $y=0$.
We denote by $p_{i-1}, p_{i}\in\mathbb{N}^{2}$ 
the lattice points connected by the $i$-th step in a 
Dyck path. 
A Dyck path of length $2N$ is said to be the highest path 
if it consists of $N$ up steps and successively $N$ down steps. 

Let $\xi:=\xi_{1}\ldots\xi_{N}$ be an increasing sequence 
of positive integers of length $N$ such that $\xi_{i}\le 2i-1$.
We have a bijection between a Dyck path and $\xi$ as follows.
When an integer $i\in\{1,2,\ldots,2N\}$ appears in $\xi$, 
the $i$-th step of a Dyck path is set to be an up step.
Reversely, when the $i$-th step of a Dyck path is an up step,
$i$ appears in $\xi$. The condition $\xi_{i}\le 2i-1$ comes from 
the fact that a Dyck path is above the horizontal line.
We denote by $\mathcal{D}(\xi)$ the Dyck path corresponding 
to $\xi$.
\begin{defn}
The statistics $\mathrm{Area}(\xi)$ is defined as the number 
of unit boxes above $\mathcal{D}(\xi)$ and below the highest
Dyck path.
\end{defn}

Recall that $\phi:=\phi(C)$ a permutation of order $L$.
We devide $\phi$ into two subwords $\phi^{o}$ and $\phi^{e}$.
We define 
$\phi^{o}:=\phi^{o}_1\ldots\phi^{o}_{L/2}$ 
(resp. $\phi^{e}:=\phi^{e}_1\ldots\phi^{e}_{L/2}$) 
such that 
$\phi^{0}_{i}=\phi_{2lp+q}$ (resp. $\phi^{e}_{i}=\phi_{l(2p+1)+q}$) where $i$ is uniquely written as 
$i=pl+q$ with $0\le p$ and $1\le q\le l$.
We define $\phi^{e}_{<}$ as a unique increasing sequence obtained from 
$\phi^{e}$ by sorting in the lexicographic order.

\begin{theorem}
\label{thrm:Dyckinv}
Let $C\in\mathrm{DC}_{l,m,n}$ and $\phi:=\phi(C)$. 
Then, we have 
\begin{eqnarray}
\label{invDyck}
\mathrm{inv}(C)=\mathrm{Area}(\phi^{e}_{<})+\mathrm{inv}(\phi^{e})+
\mathrm{inv}(\phi^{o}).
\end{eqnarray}
\end{theorem}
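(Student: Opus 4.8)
The plan is to read the identity off the relation between the statistic $\mathrm{st}$ and the inversion number already proved in Proposition~\ref{prop:DCst}, together with a closed formula for the area of a Dyck path. Set $N:=L/2$ and $\phi:=\phi(C)$. Since $\varphi(C)=\mathrm{dStd}^{l}(\phi)$ and $\phi$ has weakly increasing blocks of length $l$ by construction of $\phi_{2}$, Proposition~\ref{prop:uniNDP} shows that $\phi$ is the permutation $\pi$ appearing in the definition \eqref{eqn:defst} of $\mathrm{st}$. Matching the block-parity conventions, the partial words $\pi^{o},\pi^{e}$ in \eqref{eqn:defst} are exactly $\phi^{o},\phi^{e}$, and $\sum_{p\ \text{odd}}\sum_{1\le q\le l}\pi(pl+q)=\sum_{i}\phi^{e}_{i}$, so that
\[
\mathrm{st}(\phi(C))=N^{2}-\sum_{i}\phi^{e}_{i}-\mathrm{inv}(\phi^{o})-\mathrm{inv}(\phi^{e}).
\]

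I would then equate this with the value $\mathrm{st}(\phi(C))=\binom{N}{2}-\mathrm{inv}(C)$ from Proposition~\ref{prop:DCst} and solve for $\mathrm{inv}(C)$. Using $N^{2}-\binom{N}{2}=\binom{N+1}{2}=\sum_{i=1}^{N}i$, this gives
\[
\mathrm{inv}(C)=\Bigl(\sum_{i}\phi^{e}_{i}-\sum_{i=1}^{N}i\Bigr)+\mathrm{inv}(\phi^{e})+\mathrm{inv}(\phi^{o}).
\]
Comparing with \eqref{invDyck}, the entire theorem reduces to the single identity $\mathrm{Area}(\phi^{e}_{<})=\sum_{i}\phi^{e}_{i}-\sum_{i=1}^{N}i$. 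As reordering does not affect the sum of the entries and $\phi^{e}_{<}$ is the increasing rearrangement of $\phi^{e}$, this is precisely the claim that for an increasing up-step sequence $\xi=\xi_{1}\ldots\xi_{N}$ one has $\mathrm{Area}(\xi)=\sum_{i=1}^{N}(\xi_{i}-i)$.

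The remaining work is therefore to prove this area formula and to check that $\phi^{e}_{<}$ is a legitimate Dyck-path sequence, i.e.\ $(\phi^{e}_{<})_{i}\le 2i-1$. For the formula I would note that in $\mathcal{D}(\xi)$ the $i$-th up-step, sitting at position $\xi_{i}$, is preceded by $i-1$ up-steps and hence by exactly $\xi_{i}-i$ down-steps; a short check shows that summing these counts the unit boxes lying between $\mathcal{D}(\xi)$ and the highest path (equivalently, starting from the highest path $\xi_{i}=i$ one reaches $\xi$ by sliding the $i$-th up-step $\xi_{i}-i$ units to the right, each slide creating one box), giving $\mathrm{Area}(\xi)=\sum_{i}(\xi_{i}-i)$. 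The validity condition is forced by the labelling defining $\phi_{1}(C)$: the entries of $\phi^{e}$ are the positions of the even values in $\phi_{1}(C)$, and because the prefix $w_{1}$ consists entirely of even values while the suffix $w_{2}$ consists entirely of odd values, enough small positions land in $\phi^{e}$ to guarantee that at least $i$ of its entries are $\le 2i-1$.

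I expect the only genuinely non-formal ingredients to be the area formula and the verification of the validity condition $(\phi^{e}_{<})_{i}\le 2i-1$; the rest is a mechanical rewriting of Proposition~\ref{prop:DCst} via the definition of $\mathrm{st}$. Before committing to the write-up I would run the numerical instance of Examples~\ref{ex:NDP}--\ref{ex:NDP2}, where $\sum_{i}\phi^{e}_{i}=92$ and $N=12$ give $\mathrm{Area}(\phi^{e}_{<})=92-78=14$ and hence $\mathrm{inv}(C)=14+8+9=31$, matching the configuration in Figure~\ref{fig:tabGCD}.
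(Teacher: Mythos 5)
Your reduction is correct, and it is a genuinely different route from the paper's. The paper proves \eqref{invDyck} directly: it classifies the inversions of $C$ by the parities of the labels of the two dots involved, showing that even--even pairs are counted by $\mathrm{inv}(\phi^{e})$, odd--odd pairs by $\mathrm{inv}(\phi^{o})$, and mixed pairs --- which are automatically inversions, since an odd-labelled (top-half) dot preceding an even-labelled (bottom-half) dot in the reading order must sit in an earlier column and a higher row --- by $\mathrm{Area}(\phi^{e}_{<})$. You instead treat the theorem as an algebraic consequence of Proposition \ref{prop:DCst}: since $\phi(C)$ is precisely the permutation through which $\mathrm{st}$ is evaluated there (Propositions \ref{prop:DCNDP} and \ref{prop:uniNDP} justify the identification), equating \eqref{eqn:defst} with Proposition \ref{prop:DCst} and using $N^{2}-\binom{N}{2}=\binom{N+1}{2}$ reduces everything to the single identity $\mathrm{Area}(\phi^{e}_{<})=\sum_{i}\left((\phi^{e}_{<})_{i}-i\right)$. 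There is no circularity (Proposition \ref{prop:DCst} is proved via elementary switches, independently of Section \ref{sec:phiDyck}), your arithmetic checks out against Examples \ref{ex:NDP}--\ref{ex:NDP2}, and the sliding argument for the area formula is the standard one. What your route buys is brevity and the insight that Theorem \ref{thrm:Dyckinv} is essentially a reformulation of Proposition \ref{prop:DCst}; what the paper's route buys is a self-contained combinatorial meaning for each of the three terms that does not depend on the computation-heavy proof of that proposition.

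The genuine gap is your treatment of the validity condition $(\phi^{e}_{<})_{i}\le 2i-1$. The reason you give --- that $w_{1}$ is all even and $w_{2}$ is all odd --- is not sufficient: the difficulty lies inside $\mathrm{word}(C)$, where even labels (bottom-half dots) and odd labels (top-half dots) interleave column by column, and nothing about $w_{1}$, $w_{2}$ rules out an early excess of odd labels there. The condition is equivalent to requiring that every prefix of $\phi_{1}(C)$ contain at least as many even letters as odd letters; since within each column block the even labels precede the odd ones, the critical prefixes end at column boundaries, and there one must invoke the defining constraint of $\mathrm{DC}_{l,m,n}$. A clean argument (say for $mn$ even, with $r=|w_{1}|=lmn/2-l(n-1)$): by the third condition of Definition \ref{defn:GDC}, all $l$ dots of row $i\le n$ lie in columns $\le il$; applying this to $C$ shows that columns $1,\dots,j$ with $j=pl+q$ contain at least $lp$ bottom-half dots, and applying it to the $180$-degree rotation of $C$ (again a Dellac configuration) shows that columns $j+1,\dots,ln$ contain at least $l(n-1-p)$ top-half dots, so columns $\le j$ contain at most $lmn/2-l(n-1-p)$ of them. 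Hence the prefix excess is at least $r+lp-\left(lmn/2-l(n-1-p)\right)=0$, as required; the $mn$ odd case is analogous. Note that this fact is needed just as much by the paper's own proof --- without it $\mathcal{D}(\phi^{e}_{<})$, and hence $\mathrm{Area}(\phi^{e}_{<})$, is not even defined under the paper's conventions --- but as written your justification for it would not survive refereeing, so this step must be filled in before your write-up is complete.
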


\begin{example}
Let $\phi^{e}=(1,2,3,5,4,9,6,13)$ and 
$\phi^{o}=(7,11,8,10,12,14,15,16)$.
Then, since $\phi^{e}_{<}=(1,2,3,4,5,6,9,13)$, 
$\mathrm{Area}(\phi^{e}_{<})=7$. 
The generalized permutation is given by
\begin{eqnarray*}
\phi^{-1}=(2,2,4,6,4, 8,1, 3, 6, 3,1,5,8,5,7,7).
\end{eqnarray*}
The generalized Dellac configuration $C$ corresponding 
to $\phi^{-1}$ is 
\begin{eqnarray*}
C=\tikzpic{-0.5}{
\draw(0,0)--(0,3)(1/2,0)--(1/2,3)(1,0)--(1,3)(3/2,0)--(3/2,3)
(2,0)--(2,3)(5/2,0)--(5/2,3)(3,0)--(3,3);
\draw(0,0)--(3,0)(0,1/2)--(3,1/2)(0,1)--(3,1)(0,3/2)--(3,3/2)
(0,2)--(3,2)(0,5/2)--(3,5/2)(0,3)--(3,3);
\draw(1/4,1/4)node{$\bullet$}(3/4,1/4)node{$\bullet$}
(1/4,3/4)node{$\bullet$}(7/4,3/4)node{$\bullet$}
(3/4,5/4)node{$\bullet$}(11/4,5/4)node{$\bullet$}
(5/4,7/4)node{$\bullet$}(9/4,7/4)node{$\bullet$}
(5/4,9/4)node{$\bullet$}(7/4,9/4)node{$\bullet$}
(9/4,11/4)node{$\bullet$}(11/4,11/4)node{$\bullet$};
}.
\end{eqnarray*}
We have $\mathrm{inv}(\phi^{e})=\mathrm{inv}(\phi^{o})=2$ and 
$\mathrm{inv}(C)=11$.
\end{example}

\begin{proof}[Proof of Theorem \ref{thrm:Dyckinv}]
The map $\phi^{-1}(C)$ consists of labels of a generalized 
Dellac configuration $C\in\mathrm{DC}_{l,m,n}$ and two words 
$w_1$ and $w_2$.
When an integer $i\ge l(m-1)+1$ appears in $\phi^{e}$, 
the label of the dot $d_{i}$ is even.
Similarly, if an integer $1\le i\le L$ does not appear in $\phi^{e}$,
which is equivalent to that $i$ appears in $\phi^{o}$, 
then the label of the dot $d_{i}$ is odd. 
Let $i$ and $j$ be two integers satisfying the following two conditions: 
1) $i<j$ and 2) $i\in\phi^{o}$ and $j\in\phi^{e}$. 
Then, by the definition of an inversion of $C$, the pair of 
two dots $(d_{i},d_{j})$ is an inversion.
Therefore, the function $\mathrm{Area}(\phi^{e}_{<})$ counts the 
number of inversions $(d_{i},d_{j})$ such that $i\in\phi^{o}$ and $j\in\phi^{e}$.

Let $i$ and $j$ be the two integers such that $l(m-1)+1\le i<j$ and 
$i$ and $j$ appears in $\phi^{e}$.
Let $r_{i}$ (resp. $r_{j}$) be an integer such that $\phi^{e}_{r_{i}}=i$ 
(resp. $\phi^{e}_{r_{j}}=j$). 
The number of inversions of $\phi^{e}$ is the number of pairs $(r_{i},r_{j})$ 
satisfying $r_{i}>r_{j}$.
Let $(f_{i},f_{j})$ be a pair of dots forming an inversion of $C$, $i>j$ and 
their labels are even.
Then, $i$ and $j$ appear in $\phi^{e}$ and satisfy $r_{i}>r_{j}$.
The number of inversions $\mathrm{inv}(\phi^{e})$ in $\phi^{e}$ is 
equal to the number of inversions of $C$ between dots labeled by even integers.

By a similar argument, $\mathrm{inv}(\phi^{o})$ is equal to the number of 
inversions of $C$ between dots labeled by odd integers.

Summarizing the above discussions, we have Eqn. (\ref{invDyck}).
\end{proof}

\subsection{Dyck paths for Dellac configurations}
In this subsection, we study the relation between Dyck paths 
obtained from $\phi(C)$ and Dyck paths constructed in \cite{Big14}.
Dyck paths studied in \cite{Big14} have a weight on down steps.
Here, we focus on only Dyck paths and ignore its weights.

We follow \cite{Big14} for the definition of Dyck paths.
Let $C\in\mathrm{DC}_{1,2,n}$ be a Dellac configuration.
We introduce a Dyck path corresponding to $C$ by the following 
algorithm.
Let $j\in[1,n]$, and $e(i_{1}(j))$ and $e(i_{2}(j))$ with 
$i_{1}(j)<i_{2}(j)$ be two dots in the $j$-th column of $C$.
The values $i_1(j)$ and $i_2(j)$ are the numbers of rows enumerated 
from the bottom.

Suppose that $i_1(j)\le n<i_2(j)$ and $e(i,j)$ be a dot in the $i$-th row
and $j$-th column.
We define non-negative integers $l_C^{e}(e_1)$ and $r_C^{o}(e_{2})$
with $e_1=e(i_1(j))$ and $e_2=e(i_2(j))$ by 
\begin{align*}
l_{C}^{e}(e_1)&:=\#\{ e(i',j') : i_{1}(j)<i'\le n, j'<j \}, \\
r_{C}^{o}(e_2)&:=\#\{ e(i',j') : n<i'<i_2(j), j<j' \}.
\end{align*}
Recall that a Dyck path $p:=(p_1,p_2,\ldots, p_{2n})$ of length $2n$ consists of 
up steps and down steps. 
We denote an up step by $U$ and a down step by $D$.
Thus, each step $p_i$ is either $U$ or $D$.
We define a Dyck path for $C$ as follows. 
\begin{enumerate}
\item If $i_{2}(j)\le n$, we define $(p_{2j-1},p_{2j}):=(U,U)$.
\item If $i_1(j)\le n<i_2(j)$, we have two cases:
\begin{enumerate}
\item if $l_{C}^{e}(e_1)>r_{C}^{o}(e_2)$, then we define $(p_{2j-1},p_{2j}):=(D,U)$,
\item if $l_{C}^{e}(e_1)\le r_{C}^{o}(e_2)$, then we define $(p_{2j-1},p_{2j}):=(U,D)$.
\end{enumerate}
\item If $n<i_{1}(j)$, we define $(P_{2j-1},p_{2j}):=(D,D)$.
\end{enumerate}
We denote by $p(C)$ the path obtained from $C$ by the above algorithm.

\begin{prop}[Proposition 3.5 in \cite{Big14}]
Let $p(C)$ be defined as above. Then, $p(C)$ is a Dyck path of length $2n$.
\end{prop}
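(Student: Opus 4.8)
The plan is to show that the word $p(C)$ produced by the algorithm is genuinely a Dyck path, which means verifying two things: that it has equal numbers of up and down steps (so it ends at height $0$), and that it never dips below the horizontal axis. The balance condition is immediate from the global structure of a Dellac configuration. First I would count how many of each step type arise. A Dellac configuration in $\mathrm{DC}_{1,2,n}$ has $n$ columns and $2n$ rows, with exactly two dots per column, and each dot sits in either the bottom half (rows $1$ through $n$) or the top half (rows $n+1$ through $2n$); the staircase condition $i\le j\le i+n$ guarantees precisely $n$ dots lie in the bottom half and $n$ dots in the top half, since each row holds one dot. In the algorithm, case (1) produces $UU$ (two bottom dots), case (3) produces $DD$ (two top dots), and case (2) produces one $U$ and one $D$ (one dot in each half). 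Thus the number of up steps equals the number of bottom-half dots and the number of down steps equals the number of top-half dots, and both equal $n$. Hence $p(C)$ has $2n$ steps, $n$ up and $n$ down, ending at height $0$.

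The substantive part is the nonnegativity: after the $2j$-th step the path must sit at height $\ge 0$ for every $j$. The height after reading the first $j$ columns equals (number of $U$'s so far) minus (number of $D$'s so far), which by the step-count analysis equals (number of dots in the bottom half appearing in columns $1,\ldots,j$) minus (number of dots in the top half appearing in columns $1,\ldots,j$). So I would recast nonnegativity as the purely combinatorial claim that, reading columns left to right, the running count of bottom-half dots never falls below the running count of top-half dots. The key geometric input is the staircase constraint: a top-half dot $e(i_2(j))$ with $i_2(j)=i$ (where $n<i\le 2n$) occupies column $j$ with $i-n\le j\le i$, and in particular $j\ge i-n$. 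This forces top-half dots to appear only sufficiently far to the right, which is exactly what prevents the down steps from outrunning the up steps prematurely.

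The main obstacle, and the step I expect to require the most care, is turning this intuition into a clean inequality for the running counts. Concretely, I would argue that among the first $j$ columns, the bottom-half dots are those dots $e(i,\cdot)$ with $1\le i\le n$ sitting in columns $\le j$, while each top-half dot in a column $\le j$ has row index $i\le j+n$ by the staircase condition; pairing these off against the available bottom-half rows via the constraint $i\le j\le i+n$ shows that up to column $j$ at most $j$ top-half dots can have appeared whereas the bottom-half dots already contribute enough up steps. I would make this rigorous by an injection or a direct inequality: the number of top-half dots in columns $1,\dots,j$ is at most the number of bottom-half dots in columns $1,\dots,j$, because each top-half dot in column $j'\le j$ lives in some row $i'$ with $n<i'\le j'+n\le j+n$, and the $\le j$ such rows can be matched to distinct bottom-half rows $i'-n\le j$ that must already be occupied in earlier or equal columns. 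Finally I note that the internal height after an odd step (within a column) is never the issue because the only intermediate configuration that could drop is $DU$ in case (2a), which starts from a height that is already positive by the preceding even-step analysis; one checks the single-step excursion $\cdots D U\cdots$ cannot push below zero since the height before the $D$ is at least $1$. Assembling the balance count with this running-count inequality gives that $p(C)$ is a Dyck path of length $2n$.
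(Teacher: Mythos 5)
Your step-count and even-height analysis is sound: each column contributes one $U$ per bottom-half dot and one $D$ per top-half dot; there are exactly $n$ dots in each half (one dot per row); and since a dot in row $i$ and column $c$ satisfies $c\le i$, the dots of rows $1,\dots,j$ all lie in columns $\le j$, so among the $2j$ dots of the first $j$ columns at least $j$ are bottom-half dots and the height after $2j$ steps is $\ge 0$. (Note the paper gives no proof of this proposition --- it is quoted from \cite{Big14} --- so your argument has to stand on its own.)

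The gap is in your treatment of case (2a). You claim a $DU$ column ``starts from a height that is already positive by the preceding even-step analysis,'' but that analysis gives only nonnegativity, and case-(2) columns beginning at height $0$ genuinely occur: for $n=2$, the configuration with dots in boxes $(1,1),(3,1),(2,2),(4,2)$ (row, column, rows counted from the bottom) has both columns in case (2) and both starting at height $0$. So positivity is exactly what must be proved, and proving it requires the defining inequality $l_C^e(e_1)>r_C^o(e_2)$ of case (2a), which your argument never invokes. The missing step runs as follows. Suppose column $j$ is in case (2) and the height before its steps is $0$; then the bottom-half dots in columns $<j$ number exactly $j-1$, hence they are precisely the dots of rows $1,\dots,j-1$. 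The Dellac condition for the dot $e_1$ in column $j$ forces $i_1(j)\ge j$, so any dot counted by $l_C^e(e_1)$ would be a bottom-half dot in a column $<j$ with row strictly greater than $i_1(j)\ge j$, which cannot exist. Hence $l_C^e(e_1)=0\le r_C^o(e_2)$, the algorithm outputs $UD$, and no $D$ step ever departs from height $0$. Equivalently: in case (2a) one has $l_C^e(e_1)\ge1$, which supplies a bottom-half dot in columns $<j$ in a row $\ge j+1$, distinct from rows $1,\dots,j-1$, so the starting height is in fact $\ge 2$ and the $D$ step is safe. Without this argument (or an equivalent one) the proof does not exclude the path dipping to $-1$ inside a $DU$ pair, which is the only nontrivial point of the proposition; the $DD$ case is harmless for the reason you state.
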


Let $C$ be a Dellac configuration in $\mathrm{DC}_{1,2,n}$.
We define the set $\mathfrak{U}(C)$ consisting of $n+1$ positive integers:
\begin{enumerate}
\item $2\in\mathfrak{U}(C)$, 
\item If $i_{2}(j)\le n$, the labels of two dots $e_1$ and $e_{2}$ are in $\mathfrak{U}(C)$,
\item If $i_1(j)\le n<i_2(j)$, we have two cases:
\begin{enumerate}
\item if $l_{C}^{e}(e_1)>r_{C}^{o}(e_2)$, then the label of $e_{2}$ is in $\mathfrak{U}(C)$,
\item $l_{C}^{e}(e_1)\le r_{C}^{o}(e_2)$, then the label of $e_{1}$ is in $\mathfrak{U}(C)$.
\end{enumerate}
\end{enumerate}
Note that if $i_1(j)>n$, then the labels of $e_{k}$ for $k=1,2$ are not in $\mathfrak{U}(C)$.

Let $\pi:=\phi(C)$ permutation define in Section \ref{def:phi}.
We define a subsequence of $\pi$ by
\begin{align*}
\pi':=\{\pi(i) : i\in\mathfrak{U}(C)\}.
\end{align*} 
Then, an integer sequence $\pi_{<}$ of length $n+1$ is defined as 
a unique increasing sequence of $\pi'$.

The following proposition is clear from the definitions of $\phi$ and $p(C)$.
\begin{prop}
Set $C\in\mathrm{DC}_{1,2,n}$.
Let $\pi_{<}$ be an increasing sequence obtained from $\phi(C)$ and 
$p(C)$ be a Dyck path as above.
we denote by $\overline{\mathcal{D}(\pi_{<})}$ a Dyck path obtained 
from $\mathcal{D}(\pi_{<})$ by deleting the first and last steps.
Then, we have 
\begin{align*}
\overline{\mathcal{D}(\pi_{<})}=p(C).
\end{align*}
\end{prop}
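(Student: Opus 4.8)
The plan is to show that the two constructions—the Dyck path $p(C)$ built directly from $C$ via the local rules (1)--(3), and the Dyck path $\mathcal{D}(\pi_{<})$ built from the increasing sequence $\pi_{<}$ extracted from $\phi(C)$—agree step by step after trimming the outermost steps of the latter. Since both objects are determined column by column in $C$ (the $j$-th column of $C$ produces the pair $(p_{2j-1},p_{2j})$, and correspondingly governs which labels enter $\mathfrak{U}(C)$), I would argue by comparing their behavior one column at a time. The key observation is that the defining rules for $p(C)$ and for membership in $\mathfrak{U}(C)$ are \emph{the same} three-case dichotomy: case (1) with $i_2(j)\le n$, case (2) with $i_1(j)\le n<i_2(j)$ split by the sign of $l_C^e(e_1)-r_C^o(e_2)$, and case (3) with $n<i_1(j)$. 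So each column contributes an up step to $p(C)$ exactly when it contributes a label to $\mathfrak{U}(C)$, and the count matches: two up steps (case 1), one up step (case 2), or zero up steps (case 3).

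Next I would translate membership in $\mathfrak{U}(C)$ into up-steps of $\mathcal{D}(\pi_{<})$. Recall that $\mathcal{D}(\xi)$ places an up step at position $i$ precisely when $i$ appears in the increasing sequence $\xi$. Here $\xi=\pi_{<}$ is the sorted version of $\{\pi(i):i\in\mathfrak{U}(C)\}$, a sequence of length $n+1$. The crucial point is that $\phi=\phi_2\circ\phi_1$ is (up to the padding words $w_1,w_2$ and the inverse-standardization bookkeeping) essentially inversion, so that the \emph{value} $\pi(i)$ records the \emph{position} in the reading order where the label indexed by $i$ sits. I would unwind the definitions of $\phi_1$ and $\phi_2$ in the $(l,m)=(1,2)$ case to confirm that the sorted positions $\pi_{<}$ encode exactly the left-to-right order in which the chosen dots of $\mathfrak{U}(C)$ are read, so that the $k$-th element of $\pi_{<}$ being at most $2k-1$ matches the Dyck condition, and the up/down pattern of $\mathcal{D}(\pi_{<})$ reproduces the up/down pattern coming from $\mathfrak{U}(C)$.

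The remaining bookkeeping is the role of the two trimmed steps. The word $w_1=2^l\cdots$ forces the label $2$ into the picture—this is exactly the stipulation $2\in\mathfrak{U}(C)$ in clause (1) of the definition of $\mathfrak{U}(C)$—and $w_1,w_2$ together are what make $\mathcal{D}(\pi_{<})$ begin with an up step and end with a down step. Deleting these is precisely the passage to $\overline{\mathcal{D}(\pi_{<})}$, and I would check that after removing them the residual length is $2n$, matching $p(C)$. Concretely I would verify that the smallest element of $\pi_{<}$ accounts for the initial $U$ and the largest for the final $D$, both of which are artifacts of the padding rather than of any column of $C$.

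The main obstacle I anticipate is the alignment of indices: one must be careful that the reading order used to build $\mathrm{word}(C)$ (bottom-to-top, left-to-right), the inversion convention baked into $\phi_2$, and the sorting that produces $\pi_{<}$ all compose so that "column $j$ of $C$" lands on "steps $2j-1,2j$" after trimming. In particular the sign condition $l_C^e(e_1)\gtrless r_C^o(e_2)$ in case (2) must be shown to be exactly the condition that decides whether the step read first is $U$ then $D$ or $D$ then $U$; this requires relating $l_C^e$ and $r_C^o$ (counts of dots in specified quadrants of $C$) to the relative position of $\pi(e_1)$ and $\pi(e_2)$ in the sorted sequence. Once this dictionary is pinned down the proposition is, as the paper says, "clear from the definitions," but establishing the dictionary cleanly is where the real work lies.
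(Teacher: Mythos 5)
Your proposal is correct and is essentially the paper's argument: the paper gives no proof beyond ``clear from the definitions of $\phi$ and $p(C)$,'' and your column-by-column unwinding --- the same three-case dichotomy defines both $p(C)$ and $\mathfrak{U}(C)$, the letters at positions $2j$ and $2j+1$ of $\phi_1(C)$ are the labels of the lower and upper dots of column $j$, and the padding words $w_1,w_2$ account for the two trimmed steps --- is exactly the intended filling-in. Two small corrections to your write-up. First, the final step of $\mathcal{D}(\pi_{<})$ is a down step not because of the largest element of $\pi_{<}$ (every element of $\pi_{<}$ produces an up step, by the definition of $\mathcal{D}(\xi)$), but because the last position $2n+2$ of $\phi_1(C)$ carries the padding letter $2n+1$, which is neither $2$ nor the label of any dot, so $2n+2\notin\pi'$. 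Second, the obstacle you anticipate in case (2) is a non-issue: the inequality $l_C^{e}(e_1)\gtrless r_C^{o}(e_2)$ appears verbatim in the definitions of both $p(C)$ and $\mathfrak{U}(C)$, so it never has to be interpreted or related to the sorted sequence; since exactly one of the two dots of a mixed column is chosen, all that must be checked is that the lower dot is read first (position $2j$, hence trimmed step $2j-1$) and the upper dot second (position $2j+1$, hence trimmed step $2j$), so that rule (b) yields $(U,D)$ and rule (a) yields $(D,U)$, matching $p(C)$ in both subcases.
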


\subsection{Properties of \texorpdfstring{$\phi(C)$}{phi(C)}}
Given a triplet $(l,m,n)$, let $r:=|w_1|$ be the length of the word $w_1$ and 
$L$ be a total length of $\phi(C)$ defined in Section \ref{sec:GD}.
Let $\phi^{\mathrm{ref}}:=\phi^{\mathrm{ref}}_{1}\ldots\phi^{\mathrm{ref}}_{L/2}$
be an integer sequence 
such that 
\begin{eqnarray*}
\phi^{\mathrm{ref}}_{i}
:=
\begin{cases}
i, & 1\le i\le r, \\
r+m(i-r-1)+1, & r+1\le i\le L/2.
\end{cases}
\end{eqnarray*}

Fix $0\le p\le ln$. 
Then, $m$ integers $\{r+mp+j : 1\le j\le m\}$ are said to be 
in the same block and integers $r+mp+j$ and $r+mp'+k$ with 
$p\neq p'$ and $1\le j,k\le m$ are said to be in a different 
block.

\begin{remark}
\label{rmk:phi}
The word $\phi_{1}(C)$ is a concatenation of three words $w_{1}$, 
$\mathrm{word}(C)$ and $w_{2}$.
Therefore, the first $r=|w_1|$ letters in $\phi^{e}$ satisfies 
$\phi^{e}_{i}=i$ for $1\le i\le r$.
Further, we have $l$ dots in the first row and they are in from the 
first to the $l$-th columns.
This implies that $\phi_{i}=\phi^{\mathrm{ref}}_{i}$ 
for $r+1\le i\le r+l$. 
\end{remark}

\begin{theorem}
\label{thrm:charphi}
Let $r$ and $L$ be integers as above.
A permutation $\phi(C)\in\mathfrak{S}_{L}$ for a generalized 
configuration $C\in\mathrm{DC}_{l,m,n}$ 
satisfies the following conditions:
\begin{enumerate}
\item $\phi^{e}_{i}=\phi^{\mathrm{ref}}_{i}$ for $1\le i\le r+l$,
\item $\phi^{e}_{i}\le\phi^{\mathrm{ref}}_{i}$ for $r+l+1\le i\le L/2$,
\item integers in the same block appears in $\phi^{e}$ as an increasing sequence,
\item two integers $\phi^{e}_{i}$ and $\phi^{e}_{j}$ satisfying
$\lceil(i-r)/l\rceil=\lceil(j-r)/l\rceil$ are in a different block,
\item for a fixed $p\ge0$, $l$ integers $\phi^{e}_{i}$'s satisfying 
$\lceil(i-r)/l\rceil=p$ form an increasing sequence,
\item if $p-1$ does not appear in $\phi^{e}$ and $p$ appears in $\phi^{e}$, 
then $p\equiv r+1{\pmod m}$. 
\end{enumerate}
\end{theorem}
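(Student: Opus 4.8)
The plan is to analyze the word $\phi_1(C) = w_1 * \mathrm{word}(C) * w_2$ directly, tracking how its even-labeled entries land in the subsequence $\phi^e$ after applying $\phi_2$. The key observation is that the even integers in the full alphabet $\{1,\dots,L/l\}$ are exactly the labels assigned to the bottom $\lfloor mn/2\rfloor$ rows (Case A/B labelings use $e_i = 2i + \text{const}$), together with those contributed by $w_1$, and that $\phi_2$ records, for each value $k$, the positions $i_1 < \dots < i_l$ where $k$ occurs. So $\phi^e_i$ is essentially the position (in $\phi_1(C)$) of the $i$-th smallest even label, read with the destandardization convention built into $\phi$.

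\textbf{Conditions (1), (3), (4), (5).} These I would handle first, as they follow from the structure of $w_1$ and the reading order of $\mathrm{word}(C)$, much as in Remark \ref{rmk:phi}. Condition (1) is immediate: the word $w_1 = 2^l 4^l \cdots$ occupies the first $r$ letters and contributes the even values $2,4,\dots$ each $l$ times, forcing $\phi^e_i = i = \phi^{\mathrm{ref}}_i$ for $1\le i\le r$; the extension to $r+l$ uses that the $l$ dots in the first row sit in the first $l$ columns, exactly as noted in Remark \ref{rmk:phi}. Conditions (3) and (5) are statements that certain groups of $l$ or $m$ entries occur in increasing order, which is forced by the definition of $\phi_2$ (it lists repeated occurrences left-to-right, hence in increasing positional order) together with the weakly increasing column-reading of $\mathrm{word}(C)$; I would verify that "same block" corresponds precisely to the $l$ occurrences of one value and to one column of $C$. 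Condition (4) records that two entries coming from the same column of $C$ cannot share a block, which is clear since a column carries $m$ distinct labels spread across $m$ different blocks.

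\textbf{Condition (2), the inequality $\phi^e_i \le \phi^{\mathrm{ref}}_i$.} This I expect to be the main obstacle, since it is the only genuinely global constraint rather than a bookkeeping identity. The bound $\phi^{\mathrm{ref}}_i = r + m(i-r-1)+1$ is exactly the value attained by the highest configuration $C_1$, so the natural strategy is to prove the inequality holds for $C_1$ by direct computation and then show it is preserved under the inverse elementary switch $\eta^{-1}$ that increases inversions (cf. Proposition \ref{prop:eswitch}), since every configuration is reachable from $C_0$ by elementary switches and $C_1$ is the top of this poset. Concretely, an elementary switch moves a dot leftward, which can only decrease the position at which its even label is recorded in $\phi_1(C)$, hence can only decrease $\phi^e_i$; I would make this monotonicity precise by checking that switching two dots in adjacent rows changes $\phi^e$ by moving entries to smaller values while respecting the sorted index $i$. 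The delicate point is that a switch reindexes which entry is the "$i$-th" even label, so I must argue the inequality survives the relabeling, not merely that individual values drop.

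\textbf{Condition (6).} Finally, condition (6) is a congruence constraint identifying where a "new" block first appears in $\phi^e$, and I would derive it from the block decomposition already set up for (3) and (4): the smallest element of the $p$-th block is $r + m(p') + 1$ for the relevant block index $p'$, so an index $p$ appearing in $\phi^e$ without its predecessor $p-1$ must be such a block-minimum, giving $p \equiv r+1 \pmod m$. This reduces to tracking the block boundaries established earlier, and requires only that the labels within $\mathrm{word}(C)$ and $w_2$ respect these boundaries, which follows from the explicit form of the labelings in Cases A and B.
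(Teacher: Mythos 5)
Your handling of conditions (1), (3), (4), (5) and (6) is essentially the paper's own argument: the paper likewise reduces everything to the reading word $\tau_{C}$ of Section \ref{sec:GDCalt} via the identity $\phi^{e}_{r+j}=r+\tau^{e}_{j}$, identifies blocks with columns and repeated values with rows, and reads off those conditions. The genuine gap is in your treatment of condition (2), and it is twofold. First, the monotonicity claim is false: an elementary switch does not ``move a dot leftward''---it moves the lower of the two dots leftward \emph{and} the upper dot rightward. Tracking this through $\phi_{1}$ and $\phi_{2}$, when both dots lie in the bottom half of $C$ at row-major positions $a<b$, the switch from $C_{\triangle}$ to $C_{\square}$ replaces $\bigl(\phi^{e}_{r+a},\phi^{e}_{r+b}\bigr)=(r+v,\,r+u)$ by $(r+u,\,r+v)$ with $u<v$; it is a transposition of two values at fixed indices, so one entry of $\phi^{e}$ strictly increases no matter in which direction the switch is applied. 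Hence ``can only decrease $\phi^{e}_{i}$'' fails, and the induction step as stated does not go through (your write-up also conflates directions: you take $C_{1}$ as base case but propagate by $\eta^{-1}$, which increases inversions and so moves \emph{toward} $C_{1}$, not away from it). Second, the identification of the bound is wrong: $\phi^{\mathrm{ref}}_{i}$ is \emph{not} the value attained by $C_{1}$, except for $i\le r+ln$; for larger $i$ the entries of $\phi^{e}(C_{1})$ are strictly below $\phi^{\mathrm{ref}}_{i}$ (for $m\ge 3$ and $n$ not too small, $\phi^{\mathrm{ref}}_{L/2}$ even exceeds $L$, so no configuration can attain it). So $\phi^{\mathrm{ref}}$ is not the pointwise maximum over the switch poset, which is the picture your argument rests on.

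Your plan can be repaired, but only in the descending direction and with extra input: coming down from $C_{1}$ by inversion-decreasing switches, the value $r+v$ that moves to the later index $r+b$ satisfied $r+v\le\phi^{\mathrm{ref}}_{r+a}\le\phi^{\mathrm{ref}}_{r+b}$ \emph{before} the switch, since $\phi^{\mathrm{ref}}$ is increasing in $i$; the ascending version genuinely fails, because $r+v\le\phi^{\mathrm{ref}}_{r+b}$ does not imply $r+v\le\phi^{\mathrm{ref}}_{r+a}$. You would additionally need that every configuration is reachable from $C_{1}$ by elementary switches (Proposition \ref{prop:eswitch} states reachability only from $C_{0}$; one can get the $C_{1}$ version by the $180$-degree rotation symmetry), and a direct verification of the inequality for $C_{1}$. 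All of this machinery is avoidable: the paper proves (2) with no induction at all, from the counting bound $\tau_{i}\le m(i-1)+1$, which follows directly from the facts that each column of $C$ contains exactly $m$ dots and that the Dellac condition $\lceil j/l\rceil\le i$ confines the dots of the bottom $\rho$ rows to the leftmost $l\rho$ columns.
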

\begin{proof}
The property (1) is obvious from Remark \ref{rmk:phi}.

Recall that $\tau:=\tau_{C}$ is a permutation of length $lmn$ 
defined in Section \ref{sec:GDCalt}.
Since $\tau=\tau_{1}\ldots \tau_{lmn}$ is of length $lmn$, we divide it into 
two pieces: $\tau^{e}:=\tau_{1}\ldots\tau_{lmn/2}$ and $\tau^{o}:=\tau_{lmn/2+1}\ldots\tau_{lmn}$
for $mn$ even and $\tau^{e}:=\tau_{1}\ldots\tau_{l(mn-1)/2}$ and 
$\tau^{o}:=\tau_{l(mn-1)/2+1}\ldots\tau_{lmn}$
for $mn$ odd.
Then one can construct a sequence of integers $\phi^{e}$ and $\phi^{o}$ 
from $\tau$ as follows.
First, we determine $\phi^{e}_{i}=i$ for $1\le i\le r$, and 
$\phi^{e}_{r+j}=r+\tau^{e}_{j}$ for $1\le j\le L/2-r$. 
The odd part $\phi^{o}$ is given by 
$\phi^{o}_{i}=\tau^{o}_{i}$ where $1\le i\le lmn/2$ for $mn$ even 
and $1\le i\le l(mn+1)/2$ for $mn$ odd, 
$\phi^{o}_{j}=L/2+j$ where $lmn/2+1\le j\le L/2$ for $mn$ even,
and $\phi^{o}_{j}=L/2-l+j$ where $l(mn+1)/2+1\le j\le L/2$ for $mn$ odd.

For (2), recall that $\tau$ is a reading word of labels of dots $d_{i}$, $1\le i\le lmn$, 
from left to right and from bottom to top. 
Further, a configuration in $\mathrm{DC}_{l,m,n}$ contains $m$ dots in a single column,
the element $\tau_{i}$ satisfies $\tau_{i}\le m(i-1)+1$ for $1\le i$.
If we translate this condition in terms of $\phi^{e}$ satisfies the condition (2).

For (3), if two integers $p$ and $q$, $p<q$, are in the same block, 
two dots $d_{p}$ and $d_{q}$ are in the same column in the configuration $C$,
and the label of $d_{p}$ is strictly smaller than $d_{q}$.
By construction of $\tau$, two integers $p$ and $q$ appears as an increasing 
sequence in $\tau$, and so does $\phi^{e}$.
 
For (4), let $u:=\phi^{e}_{i}$ and $v:=\phi^{e}_{j}$ satisfying 
$\lceil(i-r)/l\rceil=\lceil(j-r)/l\rceil$. 
The condition $\lceil(i-r)/l\rceil=\lceil(j-r)/l\rceil$ means that 
two dots $d_{u}$ and $d_{v}$ have the same label. 
A configuration $C$ contains $l$ dots in a row. 
These $l$ dots have the same label. 
By the definition of a block, the dots are in a different block.
Thus, $u$ and $v$ are in a different block.

For (5), recall that when we construct $\tau$, we read a label 
of dots in a row from left to right.
Further, labels $i$'s of a dot $d_{i}$'s are increasing from 
left to right.  
This means that $\tau_{i}$'s satisfying $\lceil i/l\rceil=p$ 
form an increasing sequence, and (5) holds true.

For (6), suppose that $p-1$ does not appear in $\phi^{e}$ and 
$p$ appears in $\phi^{e}$.
Labels of a configuration $C$ are divided into two types according 
to their parities: 
labels in the upper half of $C$ are odd and labels in the 
lower half are even.
The assumption that $p$ appears in $\phi^{e}$ implies that 
if an integer $a$ is in the same block and $a<p$, then 
$a$ also appears in $\phi^{e}$. 
Thus, if the above assumption holds, $p$ should be the smallest 
integer in a block.
The configuration $C$ contains $m$ dots in a column, 
$p$ is always written as $p=r+qm+1$ with some non-negative integer $q$.
Thus, the condition (6) holds.
\end{proof}

Similarly, $\phi^{o}$ satisfies the similar properties as in 
Theorem \ref{thrm:charphi}.
We denote by $\overline{\phi^{o}}$ a word obtained from $\phi^{o}$
by $\overline{\phi^{o}}_{i}=L+1-\phi^{o}_{i}$ for all $1\le i\le L/2$.
We define $r':=|w_2|$.
\begin{theorem}
A word $\overline{\phi^{o}}$ satisfies the same conditions as $\phi^{e}$ 
in Theorem \ref{thrm:charphi} with $r'$ defined above.
Then, Theorem \ref{thrm:charphi} holds for $\overline{\phi^{o}}$.
\end{theorem}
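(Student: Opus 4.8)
The plan is to prove the statement by the mirror symmetry between the lower and upper halves of a generalized Dellac configuration, reducing it to Theorem \ref{thrm:charphi} already established for $\phi^{e}$. The defining inequality $\lceil j/l\rceil\le i\le\lceil j/l\rceil+(m-1)n$ of Definition \ref{defn:GDC} is invariant under the central reflection that sends a dot in the box $(i,j)$ to the box $(mn+1-i,\,ln+1-j)$, since $\lceil(ln+1-j)/l\rceil=n+1-\lceil j/l\rceil$. Hence this reflection defines an involution $\rho$ on $\mathrm{DC}_{l,m,n}$ that carries the lower half of a tableau, whose dots receive even labels, to the upper half, whose dots receive odd labels, and vice versa.

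The key step is to track $\phi$ under $\rho$. The reflection reverses the order in which dots are read, so the reading word obeys $\tau_{\rho(C)}(s)=lmn+1-\tau_{C}(lmn+1-s)$; that is, $\tau_{\rho(C)}$ is the reverse-complement of $\tau_{C}$, and the second half $\tau^{o}_{C}$ that feeds $\phi^{o}$ becomes the first half feeding $\phi^{e}(\rho(C))$. Substituting this into the explicit descriptions of $\phi^{e}$ and $\phi^{o}$ in terms of $\tau^{e}$ and $\tau^{o}$ from the proof of Theorem \ref{thrm:charphi}, I expect to obtain, for $mn$ even, the identity $\overline{\phi^{o}}(C)=\phi^{e}(\rho(C))$, where the bar is understood as the reverse-complement $i\mapsto L+1-\phi^{o}_{L/2+1-i}$: the reversal is exactly what compensates for the reversal of reading order, and the appended word $w_{2}$, which occupies the tail of $\phi^{o}$, is moved to the head of $\overline{\phi^{o}}$ to form its initial identity run of length $r'=|w_{2}|$, the precise analogue of the role $w_{1}$ plays for $\phi^{e}$. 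Since $\rho(C)\in\mathrm{DC}_{l,m,n}$, Theorem \ref{thrm:charphi} then applies verbatim to $\phi^{e}(\rho(C))$ and transfers conditions (1)--(6) to $\overline{\phi^{o}}(C)$, now governed by $r'$ in place of $r$.

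The main obstacle is the case $mn$ odd. There the two halves differ by one row, i.e.\ by $l$ dots, so that $r'=r-l$ and the boundary words $w_{1},w_{2}$ have unequal lengths; consequently the clean identity $\overline{\phi^{o}}(C)=\phi^{e}(\rho(C))$ breaks, since their initial identity runs would have different lengths, and the bare rotation no longer suffices. I would handle this case by re-running the proof of Theorem \ref{thrm:charphi} directly on the upper half of $C$: the word $\tau^{o}$ reads the odd-labelled dots, the appended word $w_{2}$ plays the role of $w_{1}$, and the complement $\overline{\cdot}$ converts the decreasing pattern natural to the top of the tableau into the increasing pattern required by conditions (3)--(6). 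Each of the six conditions is then verified by the same argument as before, with the block structure anchored at $r'+1\pmod{m}$ instead of $r+1\pmod{m}$. The delicate point throughout is confirming that the order-sensitive conditions (3)--(6) survive the reverse-complement, and that the off-by-$l$ shift in the odd case is matched precisely by the shift $r\mapsto r'$ in the reference sequence $\phi^{\mathrm{ref}}$.
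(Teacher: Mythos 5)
Your proof follows exactly the paper's route: the paper's entire argument is the observation that rotating a Dellac configuration by $180$ degrees yields another Dellac configuration, so that the role of $\phi^{e}$ is taken over by $\overline{\phi^{o}}$ with $r$ replaced by $r'$. Your write-up is in fact more careful than the paper's two-sentence proof, since you verify that the rotation preserves the defining inequality of Definition \ref{defn:GDC}, track $\phi$ through the rotation via the reverse-complement identity for $\tau_{C}$, correctly observe that the bar must be read as a reverse-complement (with the paper's literal definition $\overline{\phi^{o}}_{i}=L+1-\phi^{o}_{i}$, without reversal, condition (1) of Theorem \ref{thrm:charphi} would fail), and flag the genuine asymmetry of the case $mn$ odd, where $r'=r-l$ and the clean identification with $\phi^{e}$ of the rotated configuration breaks down --- none of which the paper makes explicit.
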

\begin{proof}
A tableau obtained by rotating a Dellac configuration $180$ degrees 
is also a Dellac configuration.
Thus, the role of $\phi^{e}$ is replaced with $\overline{\phi^{o}}$, 
which implies Theorem holds true. 
\end{proof}

\section{Properties of generalized Dellac configurations}
\subsection{A map from \texorpdfstring{$\mathrm{DC}_{l,m,n}$}{DC(l,m,n)} 
to \texorpdfstring{$\mathrm{DC}_{1,2,l(m-1)n}$}{DC(1,2,l(m-1)n)}}
\label{sec:embed}
In Section \ref{sec:embed}, we embed a generalized Dellac configuration
in $\mathrm{DC}_{l,m,n}$ into $\mathrm{DC}_{1,2,n'}$ for some 
$n'$. 
We consider an embed which preserves the number of inversions.
For this purpose, we introduce two maps $\xi_1$ and $\xi_2$.

We define two maps $\xi_{1}:\mathrm{DC}_{l,m,n}\hookrightarrow\mathrm{DC}_{1,m,ln}$ 
and $\xi_{2}:\mathrm{DC}_{1,m,n}\hookrightarrow\mathrm{DC}_{1,2,(m-1)n}$.
By composing two maps $\xi_{1}$ and $\xi_{2}$, we embed a generalized 
Dellac configuration in $\mathrm{DC}_{l,m,n}$ into $\mathrm{DC}_{1,2,l(m-1)n}$.
 
\paragraph{\bf A map $\xi_{1}$}
Let $C\in\mathrm{DC}_{l,m,n}$. 
Then, by definition, each row in $C$ contains $l$ dots.
Recall the definition of the enumeration of dots in Section \ref{sec:GDCalt}.
Suppose a dot $f_{i}$ is a box whose Cartesian coordinate is $(x_{i},y_{i})$. 
Since we embed $C$ into $\mathrm{DC}_{1,m,nl}$, we consider a tableau $D$ of 
width $ln$ and height $mn$.
A dot $f'_{i}$ in the image $\xi_{1}(C)$ corresponds to a dot $f_{i}$ in $C$ 
as follows: 
the Cartesian coordinate of $f'_{i}$ is $(x_{i},i)$ in $D$.
The coordinate $x(i):=x_{i}$ of $f'_{i}$ satisfies  
\begin{eqnarray}
\label{eqn:xi1}
x(pl+1)<x(pl+2)<\ldots<x(pl+{l}), 
\end{eqnarray}
for $0\le p\le mn$.

Let $\overline{\mathrm{DC}}_{1,m,ln}\subset\mathrm{DC}_{1,m,ln}$ be the 
set of generalized Dellac configurations satisfying Eqn. (\ref{eqn:xi1}).
Then, one can easily construct the inverse 
$\xi^{-1}_{1}:\overline{\mathrm{DC}}_{1,m,ln}\rightarrow\mathrm{DC}_{l,m,n}$
by reversing the above procedure.
Thus, the map $\xi_{1}$ is a bijection between $\mathrm{DC}_{l,m,n}$ 
and $\overline{\mathrm{DC}}_{1,m,ln}$.

The map $\xi_{1}$ preserves the number of inversions, {\it i.e.}, 
\begin{eqnarray*}
\mathrm{inv}(C)=\mathrm{inv}(\xi_{1}(C)).
\end{eqnarray*}

\begin{example}
Let $C$ be a generalized Dellac configuration in $\mathrm{DC}_{2,2,2}$ (see 
left figure in Fig. \ref{fig:GCDxi1}).
Then, $\xi_{1}(C)$ is given by the right figure in Fig. \ref{fig:GCDxi1}.
We have $\mathrm{inv}(C)=\mathrm{inv}(\xi_{1}(C))=4$.
\begin{figure}[ht]
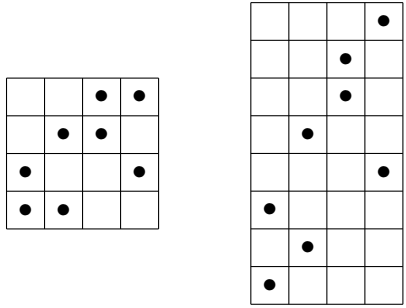

\tikzpic{-0.5}{
\draw(0,0)--(0,2)(1/2,0)--(1/2,2)(1,0)--(1,2)(3/2,0)--(3/2,2)(2,0)--(2,2);
\draw(0,0)--(2,0)(0,1/2)--(2,1/2)(0,1)--(2,1)(0,3/2)--(2,3/2)(0,2)--(2,2);
\draw(1/4,1/4)node{$\bullet$}(1/4,3/4)node{$\bullet$}
(3/4,1/4)node{$\bullet$}(3/4,5/4)node{$\bullet$}
(5/4,5/4)node{$\bullet$}(5/4,7/4)node{$\bullet$}
(7/4,3/4)node{$\bullet$}(7/4,7/4)node{$\bullet$};
}\qquad
\tikzpic{-0.5}{
\draw(0,0)--(2,0)(0,1/2)--(2,1/2)(0,1)--(2,1)(0,3/2)--(2,3/2)(0,2)--(2,2)(0,5/2)--(2,5/2)
(0,3)--(2,3)(0,7/2)--(2,7/2)(0,4)--(2,4);
\draw(0,0)--(0,4)(1/2,0)--(1/2,4)(1,0)--(1,4)(3/2,0)--(3/2,4)(2,0)--(2,4);
\draw(1/4,1/4)node{$\bullet$}(1/4,5/4)node{$\bullet$}
(3/4,3/4)node{$\bullet$}(3/4,9/4)node{$\bullet$}
(5/4,11/4)node{$\bullet$}(5/4,13/4)node{$\bullet$}
(7/4,7/4)node{$\bullet$}(7/4,15/4)node{$\bullet$};
}
\caption{A generalized Dellac configuration $C$ and $\xi_{1}(C)$}
\label{fig:GCDxi1}
\end{figure}
\end{example}

\paragraph{\bf A map $\xi_{2}$}
Let $C\in\mathrm{DC}_{1,m,n}$ and $C'\in\mathrm{DC}_{1,2,(m-1)n}$. 
Recall that a permutation $\tau_{C}\in\mathfrak{S}_{mn}$ characterizes
a Dellac configuration $C$ (see Section \ref{sec:GDCalt}).
We will construct a map $\xi_{2}$ in terms of $\tau_{C}$ by defining 
a map from $\tau_{C}$ to $\tau_{C'}$.
Let $A\subset\{1,2,\ldots,lm\}$ be a set of positive integers 
such that $r\in A$ if and only if $r\not\equiv0,1\pmod{m}$.

Let $a\in A$ be the minimum element in $A$.
Since an integer $\{1,2,\ldots,mn\}$ appears exactly once in 
the permutation $\tau_{C}$, we put one more $a$ soon after 
the integer $a$ in $\tau_{C}$.
We attach subscripts $1$ and $2$ to two $a$'s from left to right 
and make them distinct.  
We denote by $a_{i}$ the integer $a$ with a subscript $i$. 
We define the order of two $a$'s by $a_{1}<a_{2}$.
Let $\overline{\tau}_{C}:=\overline{\tau}_{1}\ldots\overline{\tau}_{mn+1}$ 
be a word obtained from $\tau_{C}$ by the above procedure.
Let $r$ be a positive integer such that $\overline{\tau}_{r}=a_{1}$ 
and $\overline{\tau}_{r+1}=a_{2}$.
We define two sets by
\begin{eqnarray*}
\mathrm{InvL}(a)&:=&\{s: \overline{\tau}_{s}>a, 1\le s\le r-1 \}, \\
\mathrm{InvR}(a)&:=&\{s: \overline{\tau}_{s}<a, r+2\le s\le mn+1 \}. 
\end{eqnarray*}
We define a pair of non-negative integers 
by $v_{a}:=(\#\mathrm{InvL}(a),\#\mathrm{InvR}(a))$.
We denote by $V_{A}:=\{v_{a}: a\in A\}$.

Let $b(1)<b(2)<\ldots<b(t)$ be a increasing sequence such that 
$b(i)\in\mathrm{InvL}(a)$ and $t=\#\mathrm{InvL}(a)$.
Similarly, let $c(1)<c(2)<\ldots<c(u)$ be a increasing sequence such that 
$c(i)\in\mathrm{InvL}(a)$ and $u=\#\mathrm{InvR}(a)$.
We perform the following operation on $\overline{\tau}_{C}$ and 
obtain a new word $\overline{\tau}'_{C}:=\overline{\tau}'_{1}\ldots\overline{\tau}'_{mn+1}$.
If $s\notin\mathrm{InvL}(a)\cup\mathrm{InvR}(a)$, we have $\overline{\tau}_{s}=\overline{\tau}'_{s}$.
We define $\overline{\tau}'_{b(i)}=\overline{\tau}_{b(i-1)}$ for $2\le i\le t$, 
$\overline{\tau}'_{r+1}=\overline{\tau}_{b(t)}$, and 
$\overline{\tau}'_{b(1)}=a_{2}$. 
Similarly, we define $\overline{\tau}'_{c(i)}=\overline{\tau}_{c(i+1)}$ for $1\le i\le u-1$, 
$\overline{\tau}'_{r}=\overline{\tau}_{c(1)}$, and 
$\overline{\tau}'_{c(u)}=a_{1}$.

Let $a'$ be the minimum element in $A\setminus\{a\}$.
We perform the same procedure as $a$ on $\overline{\tau}'_{C}$ 
and obtain a new word of length $mn+2$.
We continue this procedure for all the elements in $A$ and obtain
a word of length $2(m-1)n$.
Finally, we standardize the obtained word according to the total 
order of alphabets and have $\tau_{C'}$. 

It is obvious that the map $\xi_{2}$ is an injection.
Thus, the action of $\xi_{2}^{-1}$ on a Dellac configuration 
in $\mathrm{DC}_{1,2,(m-1)n}$ may not give a configuration in $\mathrm{DC}_{1,m,n}$. 	 
We have a bijection between $\tau_{C}$ and a pair $(\tau_{C'},V_{A})$ if and only if 
$(\tau_{C'},V_{A})$ is admissible.
One can construct the inverse map by reversing the above map.
Through a vector $v_{a}\in V_{A}$, we specify the position of $a_{1}$ and $a_{2}$
in $\tau_{C'}$.

By construction, the map $\xi_{2}$ preserves the number of inversions:
\begin{eqnarray*}
\mathrm{inv}(C)=\mathrm{inv}(\xi_{2}(C)).
\end{eqnarray*}

\begin{remark}
The existance of $V_{A}$ is useful when we construct the inverse map $\xi_{2}^{-1}$.
For example, we have
\begin{eqnarray*}
8_{2}2\underline{11}3\underline{12}48_{1}\rightarrow
\begin{cases} 
\underline{11}2\underline{12}8_{1}8_{2}34, & \text{for } v_{8}=(2,2), \\  
\underline{11}8_{1}8_{2}2\underline{12}34, & \text{for } v_{8}=(1,3).
\end{cases}
\end{eqnarray*}
Note that in both cases, the numbers of inversions are the same.
The map $\xi_{2}$ is an injection, at most one of the above examples gives
a certain generalized Dellac configuration.
\end{remark}

\begin{example}
Let $C$ be a generalized Dellac configuration in $\mathrm{DC}_{1,3,3}$ (see the left 
figure in Fig.~\ref{fig:GCDxi2}).
Then we have 
\begin{eqnarray*}
\tau_{C}=142756389 &\rightarrow& 142_{1}2_{2}756389\rightarrow
12_{2}2_{1}4756389  \\
&\rightarrow& 12_{2}2_{1}475_{1}5_{2}6389 
\rightarrow 12_{2}2_{1}45_{2}3765_{1}89 \\
&\rightarrow&12_{2}2_{1}45_{2}3765_{1}8_{1}8_{2}9 \\
&\rightarrow&132574986\underline{10}\underline{11}\underline{12}=\tau_{C'}.
\end{eqnarray*}
The Dellac configuration $\xi_{2}(C)$ corresponding to $\tau_{C'}$ 
is given by the right figure in Fig. \ref{fig:GCDxi2}.
We have $\mathrm{inv}(C)=\mathrm{inv}(C')=7$.
\begin{figure}[ht]
\tikzpic{-0.5}{
\draw(0,0)--(3/2,0)(0,1/2)--(3/2,1/2)(0,1)--(3/2,1)(0,3/2)--(3/2,3/2)
(0,2)--(3/2,2)(0,5/2)--(3/2,5/2)(0,3)--(3/2,3)(0,7/2)--(3/2,7/2)
(0,4)--(3/2,4)(0,9/2)--(3/2,9/2);
\draw(0,0)--(0,9/2)(1/2,0)--(1/2,9/2)(1,0)--(1,9/2)(3/2,0)--(3/2,9/2);
\draw(1/4,1/4)node{$\bullet$}(1/4,5/4)node{$\bullet$}(1/4,5/4)node{$\bullet$}
(1/4,13/4)node{$\bullet$};
\draw(3/4,3/4)node{$\bullet$}(3/4,9/4)node{$\bullet$}(3/4,11/4)node{$\bullet$};
\draw(5/4,7/4)node{$\bullet$}(5/4,15/4)node{$\bullet$}(5/4,17/4)node{$\bullet$};
}
\qquad
\tikzpic{-0.5}{
\draw(0,0)--(3,0)(0,1/2)--(3,1/2)(0,1)--(3,1)(0,3/2)--(3,3/2)(0,2)--(3,2)
(0,5/2)--(3,5/2)(0,3)--(3,3)(0,7/2)--(3,7/2)(0,4)--(3,4)(0,9/2)--(3,9/2)
(0,5)--(3,5)(0,11/2)--(3,11/2)(0,6)--(3,6);
\draw(0,0)--(0,6)(1/2,0)--(1/2,6)(1,0)--(1,6)(3/2,0)--(3/2,6)(2,0)--(2,6)
(5/2,0)--(5/2,6)(3,0)--(3,6);
\draw(1/4,1/4)node{$\bullet$}(1/4,5/4)node{$\bullet$}
(3/4,3/4)node{$\bullet$}(3/4,11/4)node{$\bullet$}
(5/4,7/4)node{$\bullet$}(5/4,17/4)node{$\bullet$}
(7/4,9/4)node{$\bullet$}(7/4,15/4)node{$\bullet$}
(9/4,13/4)node{$\bullet$}(9/4,19/4)node{$\bullet$}
(11/4,21/4)node{$\bullet$}(11/4,23/4)node{$\bullet$};
}
\caption{A generalized Dellac configuration $C\in\mathrm{DC}_{1,3,3}$ 
and $C'=\xi_{2}(C)\in\mathrm{DC}_{1,2,6}$}
\label{fig:GCDxi2}
\end{figure}
\end{example}

\subsection{Sets bijective to generalized Dellac configurations}
\label{sec:tuple1}
In this subsection, we introduce a description of generalized 
Dellac configurations in terms of sets.
In Propostion 3.1 in \cite{Fei11}, tuples $I^1,\ldots,I^n-1$ with 
$I^{l}\subset[1,n]$ and $\#(I^{l})=l$ are introduced 
to show the definitions of Genocchi numbers by Dellac and Kreweras 
are equivalent.
We generalize this description to the case of general $(l,m,n)$.
We also introduce tuples as in the case of $l=1$, and allow that 
a tuple contains the same integers.

Fix a triplet $(l,m,n)$.
We write an integer $i\in[1,ln]$ as $i:=pl+q$ with $0\le p\le n-1$ and $1\le q\le l$.
\begin{defn}
\label{defn:I}
We define a collection $\mathbf{I}$ of $ln$ pairs of a tuple and a sequence of 
non-negative integers 
$\mathbf{I}:=\{(I^{0},J^{0}),\ldots,(I^{ln-1},J^{ln-1})\}$ 
satisfying the following conditions: 
\begin{enumerate}
\item $I^{i}\subset \{1^{l}, 2^{l}\dots,((m-1)n)^{l}\}$ and $\#I^{i}=(m-1)i$, 
\item $J^{i}:=(J^{i}_{1},\ldots,J^{i}_{(m-1)n})\in [0,l+1]^{(m-1)n}$, $q\le J^{i}_{p+1}\le l$, 
and $J^{i}_r=J^{i-1}_r$ or $J^{i-1}_{r}+1$.
\item $I^{0}:=\emptyset$ and $J^{0}:=(0,\ldots,0)$,
\item $I^{i-1}\setminus\{p+1\}\subset I^{i}$ for $1\le i\le l(n-1)$ and 
$I^{i-1}\subset I^{i}$ for $l(n-1)+1\le i\le ln-1$,
\item If $p\le n-2$, $p+1\notin I^{i-1}$ and $J^{i-1}_{p+1}\ge q$, then
\begin{enumerate}
\item $J_{p+1}^{i}=J^{i-1}_{p+1}$, $p+1\notin I^{i}$ and 
$\#(I^{i}\setminus I^{i-1})=m-1$, or 
\item $J_{p+1}^{i}=J^{i-1}_{p+1}+1$, $\#(I^{i}\setminus I^{i-1})=m-1$,
\end{enumerate} 
\item If $p\le n-2$, $p+1\notin I^{i-1}$ and $J^{i-1}_{p+1}=q-1$, then 
$J_{p+1}^{i}=q$ and $\#(I^{i}\setminus I^{i-1})=m-1$,
\item If $p\le n-2$, $p+1\in I^{i-1}$ and $J^{i-1}_{p+1}\ge q$,
\begin{enumerate}
\item $J^{i}_{p+1}=J^{i-1}_{p+1}$, $p+1\notin I^{i}\setminus I^{i-1}$ and 
$\#(I^{i}\setminus I^{i-1})=m-1$, or
\item $J^{i}_{p+1}=J^{i-1}_{p+1}$, $p+1\notin I^{i}\setminus (I^{i-1}\setminus \{p+1\})$ and 
$\#(I^{i}\setminus (I^{i-1}\setminus \{p+1\}))=m$, or
\item $J^{i}_{p+1}=J^{i-1}_{p+1}+1$ and $\#(I^{i}\setminus I^{i-1})=m-1$,
\end{enumerate}
\item If $p\le n-2$, $p+1\in I^{i-1}$ and $J^{i-1}_{p+1}=q-1$, then 
$J^{i}_{p+1}=q$ and $\#(I^{i}\setminus I^{i-1})=m-1$,
\item Let $\widetilde{I^i}:=I^{i}\setminus I^{i-1}$ for cases $p=n-1$ or cases (5) to (8) except (7b).
Similarly, $\widetilde{I^i}:=I^{i}\setminus (I^{i-1}\setminus \{p+1\})$ for case (7b).
The elements in $\widetilde{I^i}$ are all distinct. 
If $r\in\widetilde{I^i}$ with $1\le r\le n$, $J^{i}_{r}=J^{i-1}_{r}+1$.
If $p+1\neq r\notin\widetilde{I^i}$ with $1\le r\le n$, $J^{i}_{r}=J^{i-1}_{r}$.
\end{enumerate}
\end{defn}

\begin{prop}
\label{prop:IGCD}
The number of collections $\mathbf{I}$ satisfying the conditions 
in Definition \ref{defn:I} is equal to the number of generalized Dellac configurations 
in $\mathrm{DC}_{l,m,n}$.
\end{prop}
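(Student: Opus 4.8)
The plan is to prove the statement bijectively, constructing a map $\Phi\colon\mathrm{DC}_{l,m,n}\to\{\mathbf{I}\}$ together with its inverse, so that the two finite sets have equal cardinality. The idea is to read a configuration $C\in\mathrm{DC}_{l,m,n}$ column by column from left to right and to let the pair $(I^{i},J^{i})$ record the cumulative data of the first $i$ columns. For the column $j$ with $\lceil j/l\rceil=p+1$, its $m$ dots sit in rows $p+1\le r_{1}<\dots<r_{m}\le p+1+(m-1)n$, and the lowest admissible row $p+1$ plays the role of a ``diagonal'' row. I would let $I^{i}$ be the multiset of the above-diagonal dot-rows occurring in columns $1,\dots,i$, suitably re-indexed into $\{1,\dots,(m-1)n\}$; this reflects the count $\#I^{i}=(m-1)i$ of condition~(1), each column contributing its $m-1$ non-diagonal dots. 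I would let $J^{i}_{r}$ be the number of dots accumulated in row $r$ up to column $i$, capped as in condition~(2). Thus $J^{i}$ encodes the row-filling constraint that each row of $C$ carries exactly $l$ dots, while $I^{i}$ encodes the column and staircase constraints.

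First I would make the forward map $\Phi$ precise and check that $\Phi(C)$ satisfies conditions (1)--(9). The monotone-growth statements---$I^{i-1}\setminus\{p+1\}\subset I^{i}$ in condition~(4) and $J^{i}_{r}\in\{J^{i-1}_{r},J^{i-1}_{r}+1\}$ in condition~(2)---hold because reading one more column can only add dots and can advance each row-count by at most one. The verification that each transition has the stated effect, in particular $\#(I^{i}\setminus I^{i-1})=m-1$ (or $m$ in case~(7b)), is exactly the bookkeeping of how the $m$ dots of column $i$ split into one diagonal dot and $m-1$ above-diagonal dots, and of how the diagonal row $p+1$ enters or leaves the active window as $i$ crosses a multiple of $l$. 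This is where the case split of conditions~(5)--(8), governed by whether $p+1\in I^{i-1}$ and whether $J^{i-1}_{p+1}\ge q$, gets matched term by term to the position of the diagonal dot.

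Next I would construct the inverse $\Psi$. Given a collection $\mathbf{I}$ obeying (1)--(9), I would reconstruct the $i$-th column by placing its above-diagonal dots in the rows recorded by the increment $\widetilde{I^{i}}$ of condition~(9) and its diagonal dot according to whether the $J$-update at position $p+1$ advanced. One then checks directly, using (1)--(9), that the resulting tableau has $l$ dots in every row (from the $J$-cap), $m$ dots in every column (from $\#\widetilde{I^{i}}$), and satisfies the staircase bound of Definition~\ref{defn:GDC}; hence $\Psi(\mathbf{I})\in\mathrm{DC}_{l,m,n}$. Showing that $\Phi$ and $\Psi$ are mutually inverse is then routine, as each undoes the column-by-column step of the other, and this yields the claimed equality of cardinalities.

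The main obstacle is the case analysis in the second step: conditions~(5)--(8), and especially case~(7b) where the diagonal row is re-placed so that $\#(I^{i}\setminus(I^{i-1}\setminus\{p+1\}))=m$, must be shown to capture \emph{exactly} the admissible column transitions, neither forbidding a legal configuration nor permitting an illegal one. The delicate point is the interaction between the row-filling data $J$ and the membership of the shifting diagonal row $p+1$ in $I$, which is what distinguishes the subcases; keeping this accounting consistent, so that no configuration is double-counted or omitted, is the crux. An alternative reduction via the inversion-preserving map $\xi_{1}$ of Section~\ref{sec:embed} to the case $l=1$ is available, but the $J$-data is tailored to handle the $l$-fold multiplicities directly, so I would carry out the bijection in full generality.
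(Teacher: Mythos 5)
Your overall strategy coincides with the paper's: a column-by-column correspondence in which $(I^{i},J^{i})$ records the state after the first $i$ columns, constructed in one direction and then inverted (the paper presents the map from collections $\mathbf{I}$ to configurations first and builds the inverse second, but it is the same bijection). The problem is that the dictionary you propose is not the one that satisfies Definition \ref{defn:I}, and the discrepancy is not routine bookkeeping to be ``verified''---it is precisely the content of conditions (5)--(9).

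Concretely, you stipulate that $I^{i}$ is the \emph{cumulative} multiset of the non-diagonal dot-rows of columns $1,\dots,i$ (``each column contributing its $m-1$ non-diagonal dots'') and that $J^{i}_{r}$ is the number of dots in row $r$ among the first $i$ columns. This presupposes that every column has a dot in its diagonal row $p+1$, which is false. Already for $(l,m,n)=(1,2,3)$ the lowest configuration $C_{0}$, with columns $\{1,2\},\{3,4\},\{5,6\}$, has no dot in row $2$ of column $2$; under your definition that column contributes $m=2$ recorded dots, so $\#I^{2}=3\neq(m-1)\cdot 2$, violating condition (1). Moreover, rows $r$ and $r+(m-1)n$ both re-index to the same value $r$, so cumulative recording can push a multiplicity above $l$ (in the same example the dots in rows $1$ and $4$ both become the value $1$), and condition (9) fails whenever a value $r\le n$ enters $I$ on account of a dot in row $r+(m-1)n$: row $r$ itself gains no dot, yet (9) would force $J_{r}$ to increase under your reading of $J$. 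The missing idea, which is the crux of the paper's proof, is that exactly one dot per column is left \emph{unrecorded}: the diagonal dot when it exists, otherwise the dot in the top admissible row $p+1+(m-1)n$ (cases (5a) and (7a)); and when the column contains neither, a copy of $p+1$ is deleted from $I^{i-1}$ (case (7b))---this deletion is the only reason condition (4) is not plain containment. Correspondingly, $J_{r}$ is not a per-row dot count but a per-column count that conflates row $r$ with row $r+(m-1)n$ (it advances when the column has a diagonal dot in row $r$ or a recorded dot re-indexing to $r$); that is what makes the bound $q\le J^{i}_{p+1}\le l$ of condition (2) and the case split on $J^{i-1}_{p+1}\ge q$ versus $J^{i-1}_{p+1}=q-1$ meaningful. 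You correctly flag the interaction of $J$ with the membership of $p+1$ in $I$ as the delicate point, but as stated your invariants would have to be replaced, not merely checked, so the proposal does not go through without this correction.
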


\begin{proof}

For $i=pl+q$ and $1\le k\le mn$, we set $[k(i)]_{+}=k$ if $k>p+1$ and 
$[k(i)]_{+}=k+(m-1)n$ if $k\le p+1$.
Similarly, we set $[k(i)]_{-}=k$ if $p+1<k\le (m-1)n$ and 
$[k(i)]_{-}=k-(m-1)n$ for $(m-1)n+1\le k$.

Given a collection $\mathbf{I}$, we will construct the corresponding generalized 
Dellac configuration $D$ and show that the map is one-to-one.
The positions of dots of $D$ in the $i$-th column are given 
by the following rules.

We consider the case where $p\le n-2$.
First, suppose $p+1\notin I^{i-1}$ and $J_{p+1}^{i-1}\ge q$.
Then, because of the condition (5), we have two cases (5a) and (5b).
For (5a), we have $p+1\notin I^{i}$ and $I^{i}\setminus I^{i-1}$ contains 
exactly $m-1$ integers $j_1,\ldots,j_{m-1}$ which are all distinct 
by the condition (9). 
Then, $D$ contains the dots whose Cartesian coordinates are 
$(i,[j_{k}(i)]_{+})$, $1\le k\le m-1$ and $(i,p+(m-1)n+1)$.
For case (5b), the set $I^{i}\setminus I^{i-1}$ contains exactly 
$m-1$ integers $j_1,\ldots,j_{m-1}$ which are all distinct. 
Note that $I^{i}\setminus I^{i-1}$ may or may not contain the integer $p+1$.
Then, $D$ contains the dots whose Cartesian coordinates are 
$(i,[j_{k}(i)]_{+})$, $1\le k\le m-1$ and $(i,p+1)$.

Secondly, suppose $p+1\notin I^{i-1}$ and $J_{p+1}^{i-1}=q-1$.
The difference $I^{i}\setminus I^{i-1}$ contains exactly 
$m-1$ distinct integers $j_{1},\ldots,j_{m-1}$. 
Then, $D$ contains the dots whose Cartesian coordinates 
are $(i,[j_{k}(i)]_{+})$, $1\le k\le m-1$ and $(i,p+1)$. 

Thirdly, suppose $p+1\in I^{i-1}$ and $J_{p+1}^{i-1}\ge q$.
We have three cases (7a), (7b) and (7c).
For (7a), the difference $I^{i}\setminus I^{i-1}$ contains exactly $m-1$
distinct integers $j_{1},\ldots,j_{m-1}$. 
Note that the $I^{i}\setminus I^{i-1}$ may or may not contain 
the integer $p+1$.
Then, $D$ contains the dots whose Cartesian coordinates are 
$(i,[j_{k}(i)]_{+})$, $1\le k\le m-1$ and $(i,p+(m-1)n+1)$.
For (7b), the difference $I^{i}\setminus (I^{i-1}\setminus \{p+1\})$ 
contains exactly $m$ distinct elements $j_{1},\ldots,j_{m}$ and 
does not contain the integer $p+1$. 
Then, $D$ contains the dots whose Cartesian coordinates are 
$(i,[j_{k}(i)]_{+})$, $1\le k\le m$.
For (7c), the positions of dots are the same as the case (5b).

Finally, suppose $p+1\in I^{i-1}$ and $J_{p+1}^{i-1}=q-1$.
The positions of dots in $D$ are the same as the case (5b), which 
implies the condition (8).

We consider the case with $p=n-1$. 
From condition (4), we have $I^{i-1}\subset I^{i}$ and 
the difference $I^{i}\setminus I^{i-1}$ contains exactly 
$m-1$ distinct integers $j_{1},\ldots,j_{m-1}$.
Then, $D$ contains the dots whose Cartesian coordinates are 
$(i,[j_{k}(i)]_{+})$, $1\le k\le m-1$ and $(i,mn)$.

For the $ln$-th column, we put $m$ dots in the unique way such that 
$D$ is a generalized Dellac configuration.

To show that this map is a bijection, we will construct the inverse 
map.
Let $D$ be a generalized Dellac configuration.
Given $D$, we construct the pair $(I^{i},J^{i})$ inductively.
Let $i=1$ and suppose that the number of rows of dots in the first column 
of $D$ are $m$ distinct integers $1<j_{1}<\ldots<j_{m-1}$. 
If $j_{m-1}=(m-1)n+1$, then we define $I^{1}:=\{1,j_{1},\ldots,j_{m-2}\}$.  
Otherwise, we define $I^{1}:=\{j_{1},\ldots,j_{m-1}\}$.
In both cases, we define $J^{1}_{1}:=J^{0}_{1}+1=1$.
If an integer $1\le r\le n$ appears in $I^{1}$, we define 
$J^{1}_{r}:=J^{0}_{r}+1=1$.
Thus, the pair $(I^{1},J^{1})$ satisfies the conditions (2) and (9).

Assume that the pair $(I^{i-1},J^{i-1})$ for $p\le n-2$ is already defined.
First, suppose that the dot in $(i,p+1)$ belongs to $D$.
Since $\#I^{i}=(m-1)i$, we have $m-1$ dots in the boxes $(i,j_{k})$, 
$1\le k\le m-1$, in $D$ with $p+1\le j_{1}<j_{2}<\ldots<j_{m-1}\le (m-1)n+p+1$. 
From the condition (2) and the fact that we have $l$ dots in a row of $D$, 
we have $q-1\le J^{i-1}_{p+1}\le l-1$.
Then, we set $I^{i}=I^{i-1}\cup\{[j_{1}(i)]_{-},\ldots,[j_{m-1}(i)]_{-}\}$
and $J^{i}_{p+1}=J^{i-1}_{p+1}+1$.
If $r\in I^{i}\setminus I^{i-1}$ with $1\le r\le n$, we set 
$J^{i}_{r}=J^{i-1}_{r}+1$.
This corresponds to the cases (5b), (6), (7c) and (8).
Second, suppose that the $(i,p+1)$ box does not have a dot.
Then, the boxes $(i,j_{k})$, $1\le k\le m$, contain a dot in $D$ 
with $j_{1}<j_{2}<\ldots<j_{m}$.
We have two cases: (i) $[j_{m}(i)]_{-}=p+1$ and (ii) $[j_{m}(i)]_{-}\neq p+1$.
For case (i), we set $I^{i}=I^{i}\cup\{[j_{1}(i)]_{-},\ldots,[j_{m-1}(i)]_{-}\}$ 
and $J^{i}_{p+1}=J^{i}_{p+1}$. 
If $r\in I^{i}\setminus I^{i-1}$ with $1\le r\le n$, we set 
$J^{i}_{r}=J^{i-1}_{r}+1$.
This corresponds to the cases (5a) and (7a).
For case (ii), we have $J^{i-1}_{p+1}\ge q$ and $p+1\in I^{i-1}$ since we have 
at least $q$ dots in a row of $D$. 
Then, we set 
$I^{i}=(I^{i-1}\setminus\{p+1\})\cup\{[j_{1}(i)]_{-},\ldots,[j_{m}(i)]_{-}\}$
and $J^{i}_{p+1}=J^{i-1}_{p+1}$.
If $r\in I^{i}\setminus (I^{i-1}\setminus\{p+1\})$ with $1\le r\le n$, we set 
$J^{i}_{r}=J^{i-1}_{r}+1$.
This corresponds to the case (7b).

Assume that $(I^{i-1},J^{i-1})$ is already defined for $p=n-1$.
Then, we have a dot in the box $(i,mn)$ and dots in the boxes 
$(i,j_{k})$, $1\le k\le m-1$, with $n\le j_{1}<\ldots<j_{m-1}\le mn-1$.
We set $I^{i}=I^{i-1}\cup\{[j_{1}(i)]_{-},\ldots,[j_{m-1}(i)]_{-}\}$ and 
$J^{i}_{p+1}=J^{i-1}_{p+1}+1$.
This completes the proof.
\end{proof}

\subsection{Alternative description of sets bijective to generalized 
Dellac configurations}
\label{sec:tuple2}
In Definition \ref{defn:I}, we give a description of a collection 
$\mathbf{I}$ with repeated elements. 
In this subsection, we introduce another simple description of 
a collection $\mathbf{K}$ without repeated elements.

\begin{defn}
\label{defn:K}
We define a collection $\mathbf{K}:=(K^{1},\ldots,K^{ln-1})$ of 
tuples satisfying the following conditions:
\begin{enumerate}
\item $K^{i}\subset\{1,2,\ldots,l(m-1)n\}$ and $\#K^{i}=(m-1)i$, 
\item $K^{i-1}\setminus\{i\}\subset K^{i}$ for $2\le i\le ln-1$,
\item Suppose $r=pl+q$ is uniquely written by $p$ and $q$ 
with $0\le p\le n-1$ and $1\le q\le l$.
\begin{enumerate} 
\item 
If $r\in K^{i}$, then all $r':=pl+q'$ satisfying $1\le q'\le q$ are 
in $K^{i}$,
\item Let $r'=pl+q'$ with $1\le q'\neq q\le l$.
If $r\in K^{i}\setminus(K^{i-1}\setminus\{i\})$, then 
$r'\notin K^{i}\setminus(K^{i-1}\setminus\{i\})$.
\end{enumerate}
\end{enumerate}
\end{defn}

\begin{prop}
The number of collections $\mathbf{K}$ satisfying the conditions 
in Definition \ref{defn:K} is equal to the number of generalized 
Dellac configurations in $\mathrm{DC}_{l,m,n}$.
\end{prop}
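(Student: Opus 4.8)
The plan is to reduce to Proposition~\ref{prop:IGCD} by constructing a bijection between the collections $\mathbf{K}$ of Definition~\ref{defn:K} and the collections $\mathbf{I}$ of Definition~\ref{defn:I}; since the latter are already in bijection with $\mathrm{DC}_{l,m,n}$, the equality of cardinalities follows. The key observation is that Definition~\ref{defn:K} is the standardization of Definition~\ref{defn:I}: decompose the ground set $\{1,\ldots,l(m-1)n\}$ into $(m-1)n$ consecutive blocks of size $l$, the $a$-th block being $\{(a-1)l+1,\ldots,al\}$, so that $r\mapsto\lceil r/l\rceil$ plays the role of $\mathrm{dStd}^{l}$. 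Condition~(3a) of Definition~\ref{defn:K} forces $K^{i}$ to meet each block $a$ in an initial segment, of some length $c^{i}_{a}\in\{0,1,\ldots,l\}$.

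First I would define the forward map $\Psi\colon\mathbf{K}\mapsto\mathbf{I}$ by flattening: set $I^{i}:=\{a^{c^{i}_{a}}:1\le a\le(m-1)n\}$, the multiset in which $a$ occurs $c^{i}_{a}$ times, and read off the counter sequence $J^{i}$ from the block-fill data recorded by the positions within each block (using condition~(3b) to see that at step $i=pl+q$ exactly the $q$-th slot of block $p+1$ is the active one). Then $\#I^{i}=\sum_{a}c^{i}_{a}=\#K^{i}=(m-1)i$, which is condition~(1) of Definition~\ref{defn:I}, and the nesting condition~(2) of Definition~\ref{defn:K}, namely $K^{i-1}\setminus\{i\}\subset K^{i}$, translates (since $i=pl+q$ lies in block $p+1$) into $I^{i-1}\setminus\{p+1\}\subset I^{i}$, matching condition~(4) of Definition~\ref{defn:I}.

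Next I would verify that the remaining, more technical conditions (5)--(9) of Definition~\ref{defn:I}, which govern how the counters $J^{i}_{p+1}$ increment and how $\widetilde{I^{i}}$ is formed, are exactly the image under $\Psi$ of the position-within-block data of $\mathbf{K}$; in particular the bifurcation in case~(7), including the special case~(7b) where a copy of $p+1$ is first removed and then $m$ (rather than $m-1$) fresh elements are added, corresponds to whether the newly occupied slot in block $p+1$ extends or merely refills the initial segment, which is precisely what conditions~(3a)--(3b) of Definition~\ref{defn:K} control. Conversely I would build $\Psi^{-1}$ by destandardization: given $(I^{i},J^{i})$, fill block $a$ of $K^{i}$ with its initial segment of length equal to the multiplicity of $a$ in $I^{i}$, and use $J^{i}$ to decide which element is removed at each step; one checks this reconstitutes a valid $\mathbf{K}$ and that the two maps are mutually inverse.

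I expect the main obstacle to be bookkeeping the interaction between the two kinds of data carried by $\mathbf{I}$ --- the active multiset $I^{i}$ (subject to removals $\setminus\{p+1\}$) and the cumulative counters $J^{i}$ --- against the single, flatter object $K^{i}$: one must show that $\mathbf{K}$, despite carrying no explicit counter, loses no information, i.e.\ that the initial-segment constraint~(3a) together with the single-slot update rule~(3b) encode $J^{i}$ faithfully. If this reconciliation proves awkward to state through $\mathbf{I}$, the fallback plan is to mirror the column-by-column construction in the proof of Proposition~\ref{prop:IGCD} directly: read $\mathbf{K}$ as recording, for each column $i=pl+q$ of a generalized Dellac configuration, the $m-1$ off-diagonal dots in flattened row coordinates, and check column by column that conditions~(1)--(3) of Definition~\ref{defn:K} are equivalent to placing $m$ dots legally in column $i$ under the staircase constraint $\lceil j/l\rceil\le i\le\lceil j/l\rceil+(m-1)n$.
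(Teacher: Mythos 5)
Your primary route is genuinely different from the paper's. The paper never passes through Definition \ref{defn:I}: it constructs a direct column-by-column correspondence between a collection $\mathbf{K}$ and the flattened configuration $D=\xi_{1}(C)\in\overline{\mathrm{DC}}_{1,m,ln}$ of Section \ref{sec:embed}, using the wrapped coordinates $[[\,\cdot\,]]_{\pm}$; conditions (1)--(2) of Definition \ref{defn:K} dictate how the $m$ dots of column $i$ are placed (the three cases being $i\notin K^{i-1}$; $i\in K^{i-1}\cap K^{i}$; $i\in K^{i-1}\setminus K^{i}$), and conditions (3a)--(3b) are matched with condition (\ref{eqn:xi1}), which characterizes the image of $\xi_{1}$ inside $\mathrm{DC}_{1,m,ln}$. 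Your fallback plan is essentially this argument, so executing it would reproduce the paper's proof. Your standardization observation ($r\mapsto\lceil r/l\rceil$, initial segments in blocks of size $l$) is a correct reading of how the two tuple models are related, and, if completed, would buy a cleaner conceptual statement: Definition \ref{defn:K} is the $\xi_{1}$-side avatar of Definition \ref{defn:I}.

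However, the step you summarize as ``verify that the remaining conditions (5)--(9) \dots are exactly the image under $\Psi$'' is a genuine gap, not a routine check, because the dictionary is not condition-by-condition. (i) The two models do not add and remove elements at the same steps even when they encode the same configuration: at a step $i=pl+q$ where the $q$-th dot of row $p+1$ sits in an earlier column while a later dot of that same row sits in column $i$ itself, the $\mathbf{K}$-side deletes $i$ and inserts a larger element of block $p+1$, whereas the $\mathbf{I}$-side inserts and deletes nothing --- that dot is recorded only as an increment of $J^{i}_{p+1}$ (cases (5b), (6), (7c), (8)). Hence only the \emph{net} block profiles agree, and any proof must argue at that level rather than matching update rules. (ii) $J^{i}$ is not a function of the block-fill data of $K^{i}$: the sets $K^{i}$ store pending counts while the $J^{i}$ store cumulative counts, so recovering $J^{i}$ requires the entire history $(K^{1},\dots,K^{i})$, which your map $\Psi$, as stated, does not use. (iii) In the inverse direction, deleting a non-maximal element of an initial segment does not leave an initial segment, so condition (2) of Definition \ref{defn:K} is pointwise stronger than condition (4) of Definition \ref{defn:I}; to verify it for the destandardized sets you need an inequality between $q$ and the multiplicity of $p+1$ in $I^{i-1}$ at every removal step, which follows only from the full interplay with the $J$'s, not from condition (4) alone. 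The touchstone for all three issues is a column of $C$ carrying dots in both rows $p+1$ and $p+1+(m-1)n$ (perfectly legal, and both wrapping to the same block); note this is also precisely where the paper's own argument is thinnest (it is dismissed with ``it is easy to show that the conditions (3a) and (3b) are equivalent to (\ref{eqn:xi1})''). So whichever route you take, this verification is the actual mathematical content of the proposition and cannot be deferred; as it stands your proposal is a plan rather than a proof.
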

\begin{proof}
We set $[[k(i)]]_{+}=k$ if $i+1\le k\le l(m-1)n$, and 
$[[k(i)]]_{+}=k+l(m-1)n$ if $1\le k\le i$. 
Similarly, we set $[[k(i)]]_{-}=k$ if $i<k\le (m-1)n$ and 
$[[k(i)]]_{-}=k-(m-1)n$ for $(m-1)n+1\le k$.

Let $C\in\mathrm{DC}_{l,m,n}$ and $D:=\xi_{1}(C)\in\mathrm{DC}_{1,m,ln}$.
Given a collection $\mathbf{K}$, we will construct the corresponding 
generalized Dellac configuration $D$ and show that the map 
is one-to-one.
Note that $\mathbf{K}$ does not have a repeated elements and 
this corresponds to the statement that there exists a single dot 
in a row of $D$.
The positions of dots of $D$ in the $i$-th column are given 
from a collection $\mathbf{K}$ by the following rules.

Suppose that $i\notin K^{i-1}$. 
Then, from the condition (2), the difference $I^{i}\setminus I^{i-1}$ 
contains exactly $m-1$ integer $j_{1},\ldots,j_{m-1}$.
The configuration $D$ contains dots in the boxes $(i,i)$ and 
$(i,[[j_{k}(i)]]_{+})$ for $1\le k\le m-1$.

Suppose $i\in K^{i-1}$.
Then, we have two cases: (i) $i\in K^{i}$ and (ii) $i\notin K^{i}$.
For (i), we have exactly $m-1$ elements $j_{1},\ldots,j_{m-1}$
in $K^{i}\setminus K^{i-1}$, where $j_{k}\neq i$.
Then, $D$ contains dots in the boxes $(i,[[i]]_{+})$ and  
$(i,[[j_{k}(i)]]_{+})$ for $1\le k\le m-1$.
For (ii), we have exactly $m$ elements $j_{1},\ldots,j_{m}$
in $K^{i}\setminus (K^{i-1}\setminus\{i\})$, where 
$j_{k}\neq i$.
Then, $D$ contains dots in the boxes $(i,[[j_{k}(i)]]_{+})$
for $1\le k\le m$.

It is easy to show that the conditions (3a) and (3b) are 
equivalent to the condition that $D\in\overline{\mathrm{DC}}_{1,m,ln}$, 
namely, the condition (\ref{eqn:xi1}).

We construct the inverse map to show that this map is a bijection.
Let $D$ be a generalized Dellac configuration in $\mathrm{DC}_{1,m,ln}$.
Given $D$, we construct $K^{i}$ inductively.
Let $i=1$ and suppose that the number of rows of dots in the first 
column of $D$ are $m$ distinct integers $1<j_{1}<\ldots<j_{m-1}$.
If $j_{m-1}=(m-1)n+1$, then we define $K^{1}:=\{1,j_{1},\ldots,j_{m-2}\}$.
Otherwise, we define $K^{1}:=\{j_{1},\ldots,j_{m-1}\}$.

Assume that $K^{i-1}$ for $1\le i$ is already defined.
First, suppose that a dot in the box $(i,i)$ belongs to $D$.
Since $\#I^{i}=(m-1)i$, we have $m-1$ dots in the boxes 
$(i,j_{k})$, $1\le k\le m-1$, in $D$ 
with $i<j_{1}<\ldots<j_{m-1}\le (m-1)n+i$.
Then, we define $I^{i}:=I^{i-1}\cup\{[[j_{1}(i)]]_{-},\ldots,[[j_{m-1}(i)]]_{-}\}$.
Second, suppose that there is no dot in the box $(i,i)$ in $D$.
Since the configuration $D$ contains a single dot in a row, there exists $i'<i$ 
such that the box $(i',i)$ contains a dot. 
Therefore, $i\in K^{i-1}$.
We have $m$ dots in the boxes $(i,j_{k})$, $1\le k\le m$, in $D$ such that 
$i<j_{1}<\ldots<j_{m}$.
Then, we define 
$K^{i}:=(K^{i-1}\setminus\{i\})\cup\{[[j_{1}(i)]]_{-},\ldots,[[j_{m}(i)]]_{-}\}$.
This completes the proof.
\end{proof}

\section{Dellac configurations with general boundaries}
\label{sec:DCgb}
In this section, we study a generalization of Dellac configurations 
with parameters $(l,m,n)=(1,2,n)$.
We denote by $\delta_{n}$ the staircase of size $n$, {\it i.e.},
$\delta_{n}=(n,n-1,\ldots,1)$.
Let $\lambda$ be a partition, {\it i.e.}, $\lambda:=(\lambda_1,\ldots,\lambda_l)$ 
satisfies $\lambda_1\ge\lambda_2\ge\ldots\ge\lambda_{l}\ge0$.
In this section, we consider only partitions $\lambda$ such that $l\le n-1$ and 
each integer appears at most twice in $\lambda$.

Let $\lambda$ and $\mu$ be partitions.
We denote by $\lambda\oplus\mu$ a partition obtained from $\lambda$ and $\mu$ 
by concatenating two partitions, {\it i.e.}, 
$\lambda\oplus\mu:=(\lambda_1,\ldots,\lambda_l,\mu_1,\ldots,\mu_{l'})$ 
where $l$ and $l'$ are the sizes of $\lambda$ and $\mu$.

Give a partition $\lambda$, we denote by $\lambda^{i}:=\lambda\setminus\{\lambda_{i}\}$ 
a partition obtained from $\lambda$ by deleting $\lambda_{i}$.

\subsection{Dellac configurations with general boundaries}
Recall a Dellac configuration is a configuration of $2n$ dots 
in the tableau with $2n$ rows and $n$ columns such that 
each column has two dots with certain conditions as in 
Definition \ref{defn:GDC}. 
The third condition in Definition \ref{defn:GDC} defines 
a region in which we are not allowed to put dots.
There are two such regions at both top and bottom.
The shape of the regions is a staircase of size $n-1$. 
By changing the staircase to the general shape, 
we naturally define a generalized Dellac configuration
with general boundaries.

Let $\lambda:=(\lambda_1,\ldots,\lambda_{n-1})$ 
and $\mu:=(\mu_1,\ldots,\mu_{n-1})$ be partitions 
inside the staircase $\delta_{n-1}$.
In the rectangle with $2n$ rows and $n$ columns, 
we denote by the $(i,j)$-box the box in the $i$-th row 
from top and the $j$-th column from left.
\begin{defn}
A generalized Dellac configuration of size $n$ with boundaries $\lambda$ and $\mu$
is a configuration of $2n$ dots such that it satisfies the first two condition in
Definition \ref{defn:GDC} and the following conditions:
\begin{enumerate}
\item there are no dots in the $(i,j)$-box satisfying $1\le i\le n-1$ and 
$1\le j\le \lambda_{i}$, 
\item there are no dots in the $(i,j)$-box satisfying $n+2\le i\le 2n$ and 
$n+1-\mu_{2n+1-i}\le j\le n$.	
\end{enumerate}
We denote by $C^{n}(\lambda,\mu)$ the total number of generalized 
Dellac configuration with boundaries $\lambda$ and $\mu$.
The top boundary is $\lambda$ and the boundary at the bottom is $\mu$.
\end{defn}

\begin{example}
Let $n=3$, $\lambda=(2)$ and $\mu=(2,1)$.
We have $9$ Dellac configurations, and $7$ of them are equivalent 
to Dellac configurations with $n=3$ and $\lambda=\mu=(2,1)$.
Two non-trivial Dellac configurations are shown in Figure \ref{fig:GDC3221}.
The shaded regions correspond to the boundary partitions. 
\begin{figure}[ht]
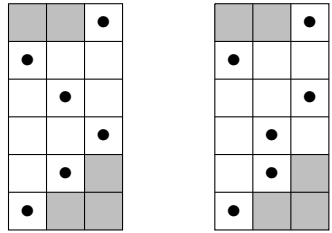

\tikzpic{-0.5}{[scale=0.5]
\fill[lightgray](0,0)--(2,0)--(2,-1)--(0,-1)--(0,0);
\fill[lightgray](1,-6)--(1,-5)--(2,-5)--(2,-4)--(3,-4)--(3,-6)--(1,-6);
\draw(0,0)--(3,0)--(3,-6)--(0,-6)--(0,0);
\draw(1,0)--(1,-6)(2,0)--(2,-6);
\draw(0,-1)--(3,-1)(0,-2)--(3,-2)(0,-3)--(3,-3)(0,-4)--(3,-4)(0,-5)--(3,-5);
\draw(2.5,-0.5)node{$\bullet$}(0.5,-5.5)node{$\bullet$}
(0.5,-1.5)node{$\bullet$}(1.5,-4.5)node{$\bullet$};
\draw(1.5,-2.5)node{$\bullet$}(2.5,-3.5)node{$\bullet$};
}\qquad
\tikzpic{-0.5}{[scale=0.5]
\fill[lightgray](0,0)--(2,0)--(2,-1)--(0,-1)--(0,0);
\fill[lightgray](1,-6)--(1,-5)--(2,-5)--(2,-4)--(3,-4)--(3,-6)--(1,-6);
\draw(0,0)--(3,0)--(3,-6)--(0,-6)--(0,0);
\draw(1,0)--(1,-6)(2,0)--(2,-6);
\draw(0,-1)--(3,-1)(0,-2)--(3,-2)(0,-3)--(3,-3)(0,-4)--(3,-4)(0,-5)--(3,-5);
\draw(2.5,-0.5)node{$\bullet$}(0.5,-5.5)node{$\bullet$}(0.5,-1.5)node{$\bullet$}
(1.5,-4.5)node{$\bullet$};
\draw(1.5,-3.5)node{$\bullet$}(2.5,-2.5)node{$\bullet$};
}
\caption{Two Dellac configurations with $n=3$, $\lambda=(2)$ and $\mu=(2,1)$.}
\label{fig:GDC3221}
\end{figure}
\end{example}

To construct a Dellac configuration, we place two dots in each column, and 
we are not allowed to put dots in the regions determined by boundary partitions 
$\lambda$ and $\mu$.
A row $r$ in a column is said to be admissible if $r$ is not in the boundary 
partitions.
Let $p_{i}$, $1\le i\le n$ be the number of admissible rows in the $i$-th 
column. 
Note that $p_i$ depends on the both boundaries $\lambda$ and $\mu$.
For example, we have $(p_1,p_2,p_3)=(5,4,4)$ when $n=3$, $\lambda=(1)$ 
and $\mu=(2,1)$. 
We define $q=\max\{p_i-n-1: 1\le i\le n\}$.

As in Section \ref{def:phi}, we put a label on a row as follows.
We put a label $2(i+q+1)$ on the $i$-th row from bottom,
and a label $2i-1$ on the $i+n$-th  row from bottom for $1\le i\le n$.
We define a permutation $\sigma(C)$ for a Dellac configuration $C$ by
\begin{align*}
\sigma(C):=w_1\ast \mathrm{word}(C)\ast w_2,
\end{align*}
where $\mathrm{word}(C)$ is similarly defined as in Section \ref{def:phi} and 
\begin{align*}
w_1&=(2,4,\ldots,2(q+1)) \\
w_2&=(2n+1,2n+3,\ldots,2n+2q+1).
\end{align*}
We set $L=n+q+1$. 
Given a permutation $\sigma:=\sigma(C)$, we define 
$\sigma^{o}:=\sigma(1)\sigma(3)\ldots\sigma(2L-1)$ and 
$\sigma^{e}:=\sigma(2)\sigma(4)\ldots\sigma(2L)$.

We define a statistics $\mathrm{st}(\sigma)$ as in Eqn. (\ref{eqn:defst}) 
by
\begin{align*}
\mathrm{st}(\sigma):=
L^{2}-\sum_{1\le i\le L}\sigma(2i)-\mathrm{inv}(\sigma^{o})-\mathrm{inv}(\sigma^{e}).
\end{align*}
We define $\mathrm{inv}(C)$ as the number of inversions of $C$ as in Section \ref{sec:GDC}.
By a similar argument to Theorem \ref{thrm:varphi}, we have the following proposition.
\begin{prop}
Let $C$ be a Dellac configuration with general boundaries.
Then, we have
\begin{align*}
\mathrm{st}(\sigma)=\genfrac{(}{)}{0pt}{}{L}{2}-\mathrm{inv}(C).
\end{align*}
\end{prop}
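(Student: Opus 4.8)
The plan is to mirror the proof of Proposition~\ref{prop:DCst}, specialized to $(l,m,n)=(1,2,n)$ but carried out with the boundary-dependent labelling, in which the $i$-th row from the bottom of the lower half is labelled $2(i+q+1)$ and the words $w_1,w_2$ have length $q+1$. Two ingredients are needed: invariance of the quantity $\mathrm{st}(\sigma(C))+\mathrm{inv}(C)$ under elementary switches, and a single base case to fix its value. An elementary switch $\eta_i$ with $\eta_i(C)\neq\emptyset$ changes $\mathrm{inv}(C)$ by one, by the same local count as in Proposition~\ref{prop:eswitch}, a count that never refers to the forbidden regions; hence it suffices to show that $\eta_i$ changes $\mathrm{st}(\sigma(C))$ by one in the opposite direction, so that $\mathrm{st}(\sigma(C))+\mathrm{inv}(C)$ is constant on each switch-connected family of configurations with fixed boundaries $(\lambda,\mu)$.

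For the switch step I would set $\sigma:=\sigma(C)$ and $\sigma':=\sigma(\eta_i(C))$ and split according to the position of the two switched dots relative to the midline between rows $n$ and $n+1$, obtaining the same three cases as in Proposition~\ref{prop:DCst}: (a) both dots in the bottom half, (b) the switched pair crosses the midline, (c) both dots in the top half. In case (a) only $\mathrm{inv}(\sigma^{e})$ is affected, changing by one, while $\sum_{1\le i\le L}\sigma(2i)$ and $\mathrm{inv}(\sigma^{o})$ are unchanged; case (c) is the $180^{\circ}$ rotation of case (a), which interchanges the boundaries $\lambda,\mu$ and the roles of $\sigma^{e},\sigma^{o}$, so that only $\mathrm{inv}(\sigma^{o})$ changes by one. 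In the crossing case (b), if $p$ dots lie below and $p'$ above the switched pair, then $\mathrm{inv}(\sigma^{o})$, $\mathrm{inv}(\sigma^{e})$ and $\sum_{1\le i\le L}\sigma(2i)$ change by $p'$, $p$ and $p+p'+1$ respectively, and in
\begin{align*}
\mathrm{st}(\sigma)=L^{2}-\sum_{1\le i\le L}\sigma(2i)-\mathrm{inv}(\sigma^{o})-\mathrm{inv}(\sigma^{e})
\end{align*}
these combine to a net change of $(p+p'+1)-p'-p=1$. Thus in every case $\mathrm{st}$ moves by exactly one unit, opposite to the change of $\mathrm{inv}(C)$. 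The point that makes this insensitive to the boundaries is that each accounting depends only on how many dots lie above and below the switched pair, not on the shape of the forbidden regions.

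To fix the invariant I would take the lowest configuration $C_{0}$ of $\mathrm{DC}_{1,2,n}$. It is admissible for every $(\lambda,\mu)$ inside $\delta_{n-1}$ because it already avoids the full staircase, and it is in fact the unique configuration with $\mathrm{inv}(C_{0})=0$, since a zero-inversion configuration forces the columns of the dots read from the bottom to be $1,1,2,2,\ldots,n,n$. For $C_{0}$ the identity reduces to the single evaluation $\mathrm{st}(\sigma(C_{0}))=\binom{L}{2}$; writing $\sigma(C_{0})=w_1\ast\mathrm{word}(C_{0})\ast w_2$ and computing the even-position sum and the two inversion numbers from the definition yields this value after a finite, $q$-dependent bookkeeping, in which the $q+1$ even labels of $w_1$ and the $q+1$ odd labels of $w_2$ enter in a controlled way (when $q=0$ this is literally Proposition~\ref{prop:DCst}).

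I expect the main obstacle to be the reduction to this single base case, which is where the general boundaries genuinely differ from Proposition~\ref{prop:DCst}. One must verify that the elementary-switch graph on boundary-admissible configurations is connected, i.e. that every configuration can be brought to $C_{0}$ by inversion-reducing switches without ever leaving the admissible set: such a switch moves a dot leftward in the lower half and rightward in the upper half, and one has to check, using the monotone (staircase) shape of the partitions $\lambda$ and $\mu$, that the new positions never fall inside the forbidden regions. The accompanying base-case computation with $q>0$ is the secondary technical point. Once connectivity is established, the switch-invariance argument is routine and identical in spirit to the $(l,m)=(1,2)$ specialization of Proposition~\ref{prop:DCst}.
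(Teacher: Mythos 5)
Your proposal is correct and is essentially the paper's own argument: the paper proves this proposition merely by invoking ``a similar argument to Theorem \ref{thrm:varphi}'', i.e.\ the switch-induction of Proposition \ref{prop:DCst} (a base configuration plus elementary switches that change $\mathrm{st}$ and $\mathrm{inv}$ by opposite units), which is exactly the structure you carry out, with the same three-case analysis around the midline. Your two refinements --- anchoring at the lowest configuration $C_{0}$, which is admissible for every boundary pair $(\lambda,\mu)$ and has zero inversions, instead of the boundary-dependent highest configuration, and explicitly verifying that inversion-reducing adjacent-row switches never leave the admissible set (which indeed follows from the monotonicity of the partitions $\lambda$ and $\mu$) --- are precisely the points the paper leaves implicit, so they strengthen rather than alter the argument.
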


\begin{remark}
One can easily show by the same argument that 
Theorem \ref{thrm:Dyckinv} also holds for a Dellac configuration $C$ with general boundaries
for $\sigma(C)$.
\end{remark}

\subsection{Enumeration of Dellac configurations with general boundaries}
In this subsection, we study enumerations of Dellac configurations.
When $\mu$ is a staircase of size $n-1$, we abbreviate $C^{n}(\lambda,\delta_{n-1})$ 
as simply $C^{n}(\lambda)$.
Let $l:=l(\lambda)$ be the number of positive integers in $\lambda$. 

More generally, we define a partition function of Dellac configurations 
with general boundaries $\lambda$ and $\mu$.
as follows.
The weight of a Dellac configuration is $q^{\mathrm{inv}(C)}$.
\begin{defn}
We define a partition function of 
Dellac configurations with boundaries $\lambda$ and $\mu$
by 
\begin{align*}
\mathfrak{C}^{n}(\lambda,\mu):=\sum_{C}q^{\mathrm{inv}(C)},
\end{align*}
where the sum is all over Dellac configurations with boundaries
$\lambda$ and $\mu$.

\begin{remark}
When $\lambda$ and $\mu$ are the staircases, the partition functions 
coincide with the Poincar\'e polynomials for the degenerate flag 
varieties studied in \cite{Fei11}.
\end{remark}

When $\mu=\delta_{n-1}$, we abbreviate $\mathfrak{C}^{n}(\lambda,\mu)$
as $\mathfrak{C}^{n}(\lambda)$.
\end{defn}

By definition, $\mathfrak{C}^{n}(\lambda,\mu)$ is a polynomial of $q$ and we have 
\begin{align*}
C^{n}(\lambda,\mu)=\mathfrak{C}^{n}(\lambda,\mu)|_{q=1}.
\end{align*}

\begin{example}
\label{ex:qGDC}
When $n=3$, $\lambda=(2,1), (2), (1,1), (1)$ or $\emptyset$, and $\mu=\delta_{2}$, we have
\begin{align*}
\mathfrak{C}^{3}((2,1))&=1+2q+3q^2+q^3, \\
\mathfrak{C}^{3}((2))&=1+2q+3q^2+2q^3+q^4, \\
\mathfrak{C}^{3}((1,1))&=1+2q+4q^2+3q^3+2q^4, \\
\mathfrak{C}^{3}((1))&=1+2q+4q^2+4q^3+3q^4+q^5, \\
\mathfrak{C}^{3}(\emptyset)&=1+2q+4q^2+4q^3+4q^4+2q^5+q^6.
\end{align*}
In general, the polynomial $\mathfrak{C}^{n}(\lambda)$ is not palindromic.
The coefficient of the top degree in $\mathfrak{C}^{n}(\lambda)$ may not
be $1$.
\end{example}

The top degree in $\mathfrak{C}^{n}(\lambda)$ can be computed by the 
following lemma.
\begin{lemma}
Let $M^{n}(\lambda)$ be the maximal number of inversions of Dellac 
configurations of size $n$ with boundaries $\lambda$ and $\delta_{n-1}$.
Then, the value $M^{n}(\lambda)$ is given by 
\begin{align}
\label{eqn:deg}
M^{n}(\lambda)=n(n-1)-\sum_{i}\lambda_{i}.
\end{align}
\end{lemma}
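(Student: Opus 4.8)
The plan is to recast inversions combinatorially and then induct on the number of cells of $\delta_{n-1}$ lying outside $\lambda$. Reading off the columns of the dots row by row from the bottom produces a word $w=w_1w_2\cdots w_{2n}$ on the alphabet $\{1,\dots,n\}$ in which each letter occurs exactly twice, and $\mathrm{inv}(C)$ is precisely the number of inversions of $w$. The two boundaries translate into a single interval constraint: the letter $j$ may occupy only the rows $j\le r\le 2n-\lambda'_j$, where $\lambda'$ is the conjugate partition, the lower bound $r\ge j$ coming from the staircase $\delta_{n-1}$ at the bottom and the upper bound from $\lambda$ at the top. Maximizing $\mathrm{inv}(C)$ is thus the same as maximizing the number of inversions over all such constrained words.

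First I would settle the base case $\lambda=\delta_{n-1}$, where $C$ is an ordinary Dellac configuration of type $(1,2,n)$, so the maximum equals $\mathrm{inv}(C_1)$; specializing \eqref{eqn:invhighest} to $(l,m,n)=(1,2,n)$ gives $\mathrm{inv}(C_1)=\binom{n}{2}$, consistent with $M^{n}(\delta_{n-1})=n(n-1)-\binom{n}{2}=\binom{n}{2}$. The inductive step is the single-cell identity $M^{n}(\lambda^{-})=M^{n}(\lambda)+1$, where $\lambda^{-}:=\lambda\setminus\{\square\}$ for a corner cell $\square$. Since $\delta_{n-1}$ is obtained from $\lambda$ by adding corner cells one at a time and each addition decreases the maximum by one, iterating gives $M^{n}(\lambda)=M^{n}(\delta_{n-1})+\bigl(\binom{n}{2}-|\lambda|\bigr)=n(n-1)-\sum_i\lambda_i$, which is \eqref{eqn:deg}.

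It remains to prove the single-cell identity. Removing $\square$ at row $i$ of $\lambda$ enlarges the admissible region by exactly one box $B$, namely the box in the column indexed by $\lambda_i$ lying one row above its previous top; hence every $\lambda$-configuration is also a $\lambda^{-}$-configuration, giving $M^{n}(\lambda^{-})\ge M^{n}(\lambda)$. For the lower bound $M^{n}(\lambda^{-})\ge M^{n}(\lambda)+1$ I would start from an inversion-maximal $\lambda$-configuration and apply the inverse of an elementary switch (Proposition \ref{prop:eswitch}), chosen to push a dot into $B$, producing a legal $\lambda^{-}$-configuration with one more inversion. For the matching upper bound I would argue that a $\lambda^{-}$-configuration either avoids $B$, and is then already a $\lambda$-configuration with at most $M^{n}(\lambda)$ inversions, or it occupies $B$, in which case a single elementary switch demoting the dot out of $B$ returns a legal $\lambda$-configuration and, by Proposition \ref{prop:eswitch}, removes exactly one inversion, so its inversion number is at most $M^{n}(\lambda)+1$.

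The main obstacle is the upper-bound half of this step: for every $\lambda^{-}$-configuration occupying $B$ one must produce an elementary switch that both vacates $B$ and lands inside the $\lambda$-admissible region. This is delicate because the top constraint from $\lambda$ and the bottom constraint from $\delta_{n-1}$ interact, and the naive choice—swapping the dot in $B$ with the dot in the row immediately below—can be inadmissible when that neighbor sits in a column with a taller $\lambda$-boundary. I expect to resolve this by taking as switch partner the nearest admissible dot to the right, so that the weakly decreasing conjugate values $\lambda'$ guarantee admissibility, and, if no single switch suffices, by composing switches whose net effect is still a decrease of exactly one inversion. An alternative route avoiding the per-configuration analysis is to describe the unique inversion-maximal configuration explicitly—obtained from the fully spread word $w=(1,2,\dots,n,n,\dots,2,1)$ by lowering the top copies of those letters whose $\lambda$-boundary forces them down—and to compute its inversion number directly, optimality then following from the switch-monotonicity of Proposition \ref{prop:eswitch}.
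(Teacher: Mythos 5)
Your overall strategy coincides with the paper's proof: induct on removing one corner box at a time starting from $\delta_{n-1}$, anchor the base case at $M^{n}(\delta_{n-1})=\binom{n}{2}$ (the ordinary Dellac case, matching \eqref{eqn:invhighest} at $(l,m,n)=(1,2,n)$), and obtain the lower bound $M^{n}(\lambda^{-})\ge M^{n}(\lambda)+1$ by an inverse switch pushing a dot into the freed box $B$ --- this is exactly the local move the paper performs on an inversion-maximal $\lambda$-configuration. (Even there, both you and the paper omit a small check: the partner dot moves to the \emph{right}, so one must verify it does not land in the bottom forbidden region $\delta_{n-1}$.) So the architecture is not a different route; the question is only whether you close the step the paper waves away.

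You do not, and the mechanism you propose for it is, as stated, false. Your upper-bound claim is that any $\lambda^{-}$-configuration occupying $B$ admits a \emph{single} elementary switch vacating $B$ and landing in a legal $\lambda$-configuration, whence $\mathrm{inv}\le M^{n}(\lambda)+1$ by Proposition \ref{prop:eswitch}. Take $n=3$, $\lambda=(2)$, $\lambda^{-}=(1)$, so $B$ is the box $(1,2)$ (row from top, column from left), and consider the $\lambda^{-}$-configuration with dots at $(1,2),(2,2),(3,3),(4,3),(5,1),(6,1)$. The second dot of column $2$ sits immediately below $B$, so every rectangle joining the dot in $B$ to a dot below-right of it contains $(2,2)$: \emph{no} elementary switch involving the dot in $B$ exists. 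The non-elementary switches with $(3,3)$ or $(4,3)$ drop the inversion count by $2$ and $3$ respectively, and any switch toward column $1$ would place a dot on the forbidden box $(1,1)$. Your fallbacks are precisely what would need proof: the ``nearest admissible dot to the right'' is $(3,3)$ and loses two inversions, while ``composing switches with net decrease one'' requires exhibiting an admissible inversion-\emph{increasing} switch after the demotion --- one exists in this example (swap $(1,2)$ with $(3,3)$, then inverse-switch $(3,2)$ with $(5,1)$), but you give no general construction. A plausible repair is to run the argument only for an inversion-\emph{maximal} $\lambda^{-}$-configuration (all the upper bound needs) and show that the blocking pattern above contradicts maximality, but that implication is also unproven. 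To be fair, the paper has the identical lacuna --- it constructs the lower-bound configuration and merely asserts ``it is easy to see that this configuration has the maximal number of inversions'' --- so you have correctly isolated the one step carrying real content; your proposal just doesn't supply it.
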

\begin{proof}
We prove the lemma by induction.
When $\lambda=\delta_{n-1}$, the maximal number of inversions 
is equal to $n(n-1)/2$, which is $M^{n}(\delta_{n-1})$.

Any partition $\lambda$ inside of $\delta_{n-1}$ can be obtained 
by deleting boxes one by one by starting from the staircase $\delta_{n-1}$.
We assume that Eqn. (\ref{eqn:deg}) is true for $\lambda$. 
Suppose that we delete a box in the $i$-th row and the $j$-th column from $\lambda$ 
to obtain a new partition $\lambda'$.
We call this box $(i,j)$-box.
Then, we consider a configuration with a dot in the $(i,j)$-box.
By definition, we have one dot in the $i$-th row and two dots in the $j$-th 
column.
Let $C_1(\lambda)$ be a configuration with a boundary $\lambda$ 
such that it has a maximal number of inversions.
We construct $C_{1}(\lambda')$ from $C_1(\lambda)$ as follows.
We put a dot $d$ in the $(i,j)$-box in $C_{1}(\lambda)$, 
and delete two dots which is left to $d$ and below $d$ in $C_{1}(\lambda)$.
Suppose deleted dots are in the $(i,j_1)$-box and the $(i_1,j)$-box in $C_1(\lambda)$.
Then, we add a dot in the $(i_1,j_1)$-box.
In total, we add two dots and delete two dots, and the new configuration is 
a Dellac configuration with the boundary $\lambda'$. 
Further, the number of inversions is 
increased by one compared to that of $C_1(\lambda)$, and 
it is easy to see that this configuration has the maximal number of inversions. 
Thus, we have Eqn. (\ref{eqn:deg}).
\end{proof}

When $l:=l(\lambda)\le n+1$, we regard $\lambda$ as a partition of size $n+1$ 
by appending $n+1-l(\lambda)$ $0$'s to $\lambda$, 
{\it i.e.}, we define a new $\lambda$ by $\lambda=(\lambda_1,\ldots,\lambda_{l},0^{n+1-l})$.

\begin{prop}
\label{prop:Crr}
The value $\mathfrak{C}^{n}_{\lambda}$ satisfies the following recurrence relations.
\begin{enumerate}
\item $\lambda_1=n-1$. 
Then, we have 
\begin{align}
\label{eqn:GDCb1}
\mathfrak{C}^{n}(\lambda)=\sum_{2\le i\le n+1}q^{i-2}\mathfrak{C}^{n-1}(\lambda^{i}),
\end{align}
where $\lambda^{i}:=\lambda\setminus\{\lambda_1,\lambda_{i}\}$ for $2\le i\le n+1$. 
\item $\lambda_1\le n-2$.
Then, we have 
\begin{align}
\label{eqn:GDCb2}
\mathfrak{C}^{n}(\lambda)=\sum_{1\le i<j\le n+1}q^{i+j-3}\mathfrak{C}^{n-1}(\lambda^{i,j}),
\end{align}
where $\lambda^{i,j}=\lambda\setminus\{\lambda_i,\lambda_j\}$.
\end{enumerate}
\end{prop}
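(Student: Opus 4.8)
The plan is to prove both relations simultaneously by a single size-reducing bijection: delete the rightmost column (column $n$) together with the two rows carrying its two dots, producing a configuration of size $n-1$. The key structural observation I would record first is that column $n$ behaves uniformly. Since $\lambda\subseteq\delta_{n-1}$ forces every blocked top cell into a column $\le n-1$, no cell of column $n$ lies in the top forbidden region; and since $\mu=\delta_{n-1}$ forbids exactly the bottom $n-1$ cells of that column, column $n$ has exactly $n+1$ admissible rows, namely the top rows $1,\ldots,n+1$, independently of $\lambda$. Writing $i<j$ for the two admissible rows (counted from the top) that carry the dots of column $n$, this explains the index range $1\le i<j\le n+1$ in (\ref{eqn:GDCb2}).

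Next I would do the inversion bookkeeping. Column $n$ is the largest column, so each of its two dots can occur in an inversion only as the lower, rightmost member; counting for each of them the dots lying strictly above it (necessarily in a smaller column) gives that the dot at position $i$ contributes $i-1$ inversions and the dot at position $j$ contributes $j-2$ (the other column-$n$ dot being discounted), for a total of exactly $i+j-3$, and these are the only inversions involving a dot of column $n$. Deleting column $n$ and the two rows $i,j$ removes precisely these pairs and preserves the relative row and column order of all remaining dots, so $\mathrm{inv}(C)=\mathrm{inv}(C')+(i+j-3)$ for the reduced configuration $C'$; this is the source of the factor $q^{i+j-3}$.

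Then I would identify the boundaries of $C'$. Because $i,j\le n+1$, the two deleted rows lie strictly above the bottom staircase region (the top rows $n+2,\ldots,2n$), so the bottom forbidden cells of columns $1,\ldots,n-1$ are untouched and the bottom boundary of $C'$ is exactly $\delta_{n-2}$. Deleting column $n$ leaves the leftmost blocked count of every surviving row unchanged (each blocked cell sits in a column $\le n-1$), so the top boundary of $C'$ is the sequence $(\lambda_t)_{t\neq i,j}$, that is $\lambda^{i,j}=\lambda\setminus\{\lambda_i,\lambda_j\}$. Conversely, inserting a new rightmost column and two rows at top-positions $i,j$, placing the two dots of that column there, reconstructs $C$ uniquely, so the construction is a bijection between configurations with top boundary $\lambda$ whose rightmost column occupies rows $i,j$ and configurations of size $n-1$ with boundaries $(\lambda^{i,j},\delta_{n-2})$. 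Summing $q^{\mathrm{inv}(C)}=q^{i+j-3}q^{\mathrm{inv}(C')}$ over all $C'$ and then over all pairs $i<j$ gives (\ref{eqn:GDCb2}).

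For the remaining case $\lambda_1=n-1$, I would note that the top row then has its first $n-1$ cells blocked, so its unique dot is forced into column $n$; hence row $1$ is always one of the two rows carrying the dots of column $n$, i.e. $i=1$ is forced. The double sum collapses to a single sum over $j\in\{2,\ldots,n+1\}$ with weight $q^{1+j-3}=q^{j-2}$ and reduced boundary $\lambda\setminus\{\lambda_1,\lambda_j\}$, which is (\ref{eqn:GDCb1}) after renaming $j$ as $i$. The step I expect to be the main obstacle is the boundary identification in the degenerate cases: for certain pairs $(i,j)$ the partition $\lambda^{i,j}$ can protrude beyond $\delta_{n-2}$ or acquire an extra part (for example $\lambda^{3,4}=(1,1)$ when $n=3$ and $\lambda=(1,1)$, forcing the term $\mathfrak{C}^{2}((1,1))=1$), so the reduced objects must be read as configurations whose forbidden region is dictated directly by $\lambda^{i,j}$ even when it exceeds the staircase. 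I would therefore need to extend $\mathfrak{C}^{n-1}(\lambda^{i,j})$ to such boundaries and verify that the insertion map is still a well-defined bijection there; this is exactly the bookkeeping that the convention of padding $\lambda$ with zeros to size $n+1$ is designed to handle.
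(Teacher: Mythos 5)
Your proposal is correct and takes essentially the same approach as the paper: the paper's (much terser) proof likewise decomposes by the rows $i<j$ occupied by the two dots of the rightmost column, observes that these dots contribute exactly $i+j-3$ inversions, and recurses on the size-$(n-1)$ configuration obtained by deleting that column and those two rows, with case (1) being the specialization in which $\lambda_1=n-1$ forces $i=1$. Your explicit treatment of the admissible rows, of the bottom boundary becoming $\delta_{n-2}$, and of the degenerate terms where $\lambda^{i,j}$ protrudes beyond $\delta_{n-2}$ (e.g. $\mathfrak{C}^{2}((1,1))=1$) makes rigorous bookkeeping that the paper leaves implicit under its convention of padding $\lambda$ with zeros to length $n+1$.
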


\begin{proof}
For case (1), we have only one place to put a dot in the top row.
This dot is in the rightmost column. In this column, we put one more 
dot $d$ somewhere in the $i$-th row with $2\le i\le n+1$.
There are $i-2$ dots which is above and left to the dot $d$, we have 
the weight $q^{i-2}$ coming from inversions relevant to $d$.
By taking the sum for such configurations together with the weight $q^{n-2}$, 
we have Eqn. (\ref{eqn:GDCb1}).

Similarly for case (2), we put two dots in the rightmost column in any 
two rows. 
Suppose that the two dots are in the $i$-th and $j$-th rows with 
$1\le i<j\le n+1$.
The number of inversions relevant to these two dots are $i+j-3$.
Thus we have the weight $q^{i+j-3}$ for these configurations.
By taking the sum for such configurations together with the weight, 
we have Eqn. (\ref{eqn:GDCb2}).
\end{proof}

\begin{example}
We calculate $C^{n}(\lambda)$ by setting $q=1$ in Proposition \ref{prop:Crr}.
Set $n=4$, $\lambda=(3,1)$ and $\mu=(3,2,1)$.
Then we have 
\begin{align*}
C^{4}(\{3,1\})&=3 C^{3}(\{1\})+ C^{3}(\emptyset), \\
&=3 (3 C^{2}(\{1\})+3C^{2}(\emptyset))+C^{3}(\emptyset), \\
&=3(3*2+3*3)+18, \\
&=63.
\end{align*}
\end{example}

Let $\delta(n-1:i)$ be the Young diagram obtained from $\delta_{n-1}$
by deleting the integer $i$ for $1\le i\le n-1$. In case of $i=0$,
we define $\delta(n-1:0)=\delta_{n-1}$. 
The first few values of $C^{n}(\delta(n-1:i))$ are shown as follows.
The values $C^{n}(\delta(n-1:i))$ for $0\le i\le n-1$ are placed 
from left to right.
\begin{center}
\begin{tabular}{c|ccccc}
n$\backslash$ i & 0 & 1 & 2 & 3 & 4  \\ \hline
1 & 1 \\
2 & 2 & 3 \\ 
3 & 7 & 9 & 15 \\
4 & 38 & 45 & 63 & 111 \\
5 & 295 & 333 & 423 & 621 & 1131 \\
\end{tabular}
\end{center}

By applying Proposition \ref{prop:Crr} with $q=1$, we have 
\begin{align*}
C^{n}(\delta(n-1))=2C^{n-1}(\delta(n-2))
+\sum_{i=1}^{n-2}C^{n-1}(\delta(n-2:i)),
\end{align*}
which is the total number of Dellac configurations 
with boundaries $\lambda=\mu=\delta_{n-1}$.

Below, we derive several recurrence relations for 
the total number of Dellac configurations of 	size $n$ 
with boundaries $\lambda$ and $\delta_{n-1}$.
Then, we prove Theorem \ref{thrm:Crr} which is a recurrence relations 
with three terms.

\begin{lemma}
\label{lemma:Crr2}
Let $\lambda:=(\lambda_1,\ldots,\lambda_l)$ be a partition with $\lambda_1\le n-3$.
We have

\begin{align}
\label{eqn:Crr2}
\mathfrak{C}^{n}((n-1,n-2)\oplus\lambda)
+q^{n}\mathfrak{C}^{n-1}((n-2)\oplus\lambda)
=[3]_{q}\mathfrak{C}^{n-1}(\lambda).
\end{align}
\end{lemma}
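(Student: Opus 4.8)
The plan is to prove the identity by applying the three-term recurrences of Proposition \ref{prop:Crr} twice and then matching coefficients. First I would apply Proposition \ref{prop:Crr}(1) to $\mathfrak{C}^{n}((n-1,n-2)\oplus\lambda)$: its first part is $n-1$, so the recurrence applies, and tracking which part is deleted in each summand gives
\begin{align*}
\mathfrak{C}^{n}((n-1,n-2)\oplus\lambda)=\mathfrak{C}^{n-1}(\lambda)+\sum_{j=1}^{n-1}q^{j}\,\mathfrak{C}^{n-1}((n-2)\oplus(\lambda\setminus\{\lambda_j\})).
\end{align*}
The weight-$q^{0}$ summand comes from deleting both $n-1$ and $n-2$ (leaving $\lambda$), while the weight-$q^{j}$ summand comes from deleting $n-1$ together with the part $\lambda_{j}$, leaving $(n-2)$ prepended to $\lambda\setminus\{\lambda_{j}\}$.

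The key observation in the next step is that the correction term $q^{n}\mathfrak{C}^{n-1}((n-2)\oplus\lambda)$ is exactly the missing ``$j=n$'' summand of the same shape: since $\lambda$ lies inside $\delta_{n-2}$ it has fewer than $n$ parts, so its $n$-th part vanishes and $\lambda\setminus\{\lambda_{n}\}=\lambda$. Hence the whole left-hand side of the lemma equals $\mathfrak{C}^{n-1}(\lambda)+\sum_{j=1}^{n}q^{j}\mathfrak{C}^{n-1}((n-2)\oplus(\lambda\setminus\{\lambda_{j}\}))$, and after cancelling a factor $q$ the lemma reduces to the cleaner identity
\begin{align*}
\sum_{j=1}^{n}q^{j-1}\,\mathfrak{C}^{n-1}((n-2)\oplus(\lambda\setminus\{\lambda_{j}\}))=(1+q)\,\mathfrak{C}^{n-1}(\lambda).
\end{align*}

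To establish this, I would expand both sides one further level down to $\mathfrak{C}^{n-2}$. On the left, every argument $(n-2)\oplus\mu$ has first part $n-2$, so Proposition \ref{prop:Crr}(1) applies and yields $\mathfrak{C}^{n-1}((n-2)\oplus\mu)=\sum_{k=1}^{n-1}q^{k-1}\mathfrak{C}^{n-2}(\mu\setminus\{\mu_{k}\})$; on the right, $\lambda_{1}\le n-3$ permits Proposition \ref{prop:Crr}(2), giving $\mathfrak{C}^{n-1}(\lambda)=\sum_{1\le a<b\le n}q^{a+b-3}\mathfrak{C}^{n-2}(\lambda\setminus\{\lambda_{a},\lambda_{b}\})$. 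Both sides then become linear combinations of the double deletions $\mathfrak{C}^{n-2}(\lambda\setminus\{\lambda_{a},\lambda_{b}\})$, and I would compare the coefficient of each pair $(a,b)$ with $1\le a<b\le n$ individually.

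The main obstacle is the index bookkeeping for the iterated deletion on the left: writing $\mu=\lambda\setminus\{\lambda_{j}\}$ and then removing the $k$-th part of $\mu$, the index shift after the first deletion means this $k$-th part is the original $\lambda_{k}$ when $k<j$ and the original $\lambda_{k+1}$ when $k\ge j$. Splitting the double sum $\sum_{j=1}^{n}\sum_{k=1}^{n-1}q^{j+k-2}(\cdots)$ accordingly, a fixed pair $1\le a<b\le n$ is produced exactly once from the range $k<j$ via $(a,b)=(k,j)$ (weight $q^{a+b-2}$) and exactly once from the range $k\ge j$ via $(a,b)=(j,k+1)$ (weight $q^{a+b-3}$), for a total coefficient $q^{a+b-3}(1+q)$. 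This matches the right-hand coefficient $q^{a+b-3}(1+q)$ term by term. Since the comparison is carried out pair by pair, no difficulty arises from distinct index pairs yielding the same partition, and the reduced identity — hence the lemma — follows.
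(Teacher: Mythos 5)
Your proof is correct, and while it rests on the same basic tool as the paper's — two levels of expansion by Proposition \ref{prop:Crr} followed by coefficient matching at the $\mathfrak{C}^{n-2}$ level — the way you organize the comparison is genuinely different and cleaner. The paper expands all three terms of Eqn. (\ref{eqn:Crr2}) simultaneously and groups the resulting terms by the underlying partition ($\lambda$ itself, $\lambda\setminus\{\lambda_i\}$, and $\lambda^{i,j}$); since deletions of zero (padding) parts all yield the same partition, this grouping forces a nontrivial verification with $q$-binomial coefficients, e.g.\ that $q^{2l+1}[n-1-l]_q^2+q^{2l}\genfrac{[}{]}{0pt}{}{n-l}{2}_{q}+q^{l+n}[n-1-l]_q$ equals $q^{2l}[3]_q\genfrac{[}{]}{0pt}{}{n-l}{2}_{q}$. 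You instead expand only $\mathfrak{C}^{n}((n-1,n-2)\oplus\lambda)$ at the first level, absorb the correction term $q^{n}\mathfrak{C}^{n-1}((n-2)\oplus\lambda)$ as the missing $j=n$ summand (legitimate, since $\lambda_n=0$ under the paper's padding convention, so $\lambda\setminus\{\lambda_n\}=\lambda$), and reduce to the identity $\sum_{j=1}^{n}q^{j-1}\mathfrak{C}^{n-1}((n-2)\oplus(\lambda\setminus\{\lambda_j\}))=(1+q)\,\mathfrak{C}^{n-1}(\lambda)$. Your second-level comparison is then carried out over formal index pairs $(a,b)$ with $1\le a<b\le n$, zero parts included: the correspondence sending $(j,k)$ with $k<j$ to $(k,j)$ and $(j,k)$ with $k\ge j$ to $(j,k+1)$ is a genuine bijection from $[1,n]\times[1,n-1]$ onto two copies of the set of pairs, giving every pair the total coefficient $q^{a+b-3}(1+q)$ on the left, exactly as on the right. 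Working with index pairs rather than distinct partitions is what makes the multiplicity issues (and all $q$-binomial identities) disappear, so your argument is both complete and more transparent; what the paper's version buys in exchange is an explicit record of the grouped coefficients themselves.
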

\begin{proof}
We expand each term in Eqn. (\ref{eqn:Crr2}) as a sum of $\mathfrak{C}^{n-2}(\lambda)$ 
for some $\mu\subseteq\lambda$ by using Proposition \ref{prop:Crr}.
Then, we compare the coefficients of $\lambda$ in both sides of Eqn. (\ref{eqn:Crr2}).

The coefficients of $\mathfrak{C}_{n-2}(\lambda)$ in the left hand side of Eqn. (\ref{eqn:Crr2})
are
\begin{align}
\begin{aligned}
\left(\sum_{l+1\le i\le n-1}q^{i}\right)\left(\sum_{l\le i\le n-2}q^{i}\right)
+\sum_{l\le i<j\le n-1}q^{i+j-1}
=q^{2l+1}[n-1-l]^{2}_q+q^{2l}\genfrac{[}{]}{0pt}{}{n-l}{2}_{q},
\end{aligned}
\end{align}
from $\mathfrak{C}^{n}((n-1,n-2)\oplus\lambda)$ and $q^{l+n}[n-1-l]_q$
from $q^{n}\mathfrak{C}^{n-1}((n-2)\oplus\lambda)$.
The sum of these two contributions is equal to
\begin{align}
q^{2l}[3]_q\genfrac{[}{]}{0pt}{}{n-l}{2}_{q}.
\end{align}
Similarly, we have a coefficient 
\begin{align}
[3]_q\left(\sum_{l\le i<j\le n-1}q^{i+j-1}\right)=q^{2l}[3]_q\genfrac{[}{]}{0pt}{}{n-l}{2}_{q}
\end{align}
of $[3]_q\mathfrak{C}^{n-2}(\delta(n-3))$ in the right hand side.
Therefore, we have 
the same coefficient.

Similarly, the coefficients of $\mathfrak{C}^{n-2}(\lambda\setminus\{\lambda_{i}\})$ for $1\le i\le l$ 
in both sides of Eqn. (\ref{eqn:Crr2}) are $q^{l-i-2}[3]_q[n-l]_q$.

We denote $\lambda^{i,j}:=\lambda\setminus\{\lambda_i,\lambda_j\}$ for $1\le i<j\le l$.
Then, the coefficients of $\mathfrak{C}^{n-2}(\lambda^{i,j})$ for $1\le i<j\le l$ in both sides of 
Eqn. (\ref{eqn:Crr2}) are $[3]_qq^{i+j-3}$.

In all cases, the coefficients of $C^{n-2}(\mu)$ in both sides of Eqn. (\ref{eqn:Crr2})
are the same, and this completes the proof. 
\end{proof}

\begin{cor}
We have 
\begin{align*}
\mathfrak{C}^{n}(\delta_{n-1})+q^{n}\mathfrak{C}^{n-1}(\delta_{n-2})=[3]_q\mathfrak{C}^{n-1}(\delta_{n-3}).
\end{align*}
\end{cor}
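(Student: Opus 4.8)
The plan is to obtain this identity as an immediate specialization of Lemma~\ref{lemma:Crr2}. That lemma holds for any partition $\lambda=(\lambda_1,\ldots,\lambda_l)$ with $\lambda_1\le n-3$, and reads
\begin{align*}
\mathfrak{C}^{n}((n-1,n-2)\oplus\lambda)+q^{n}\mathfrak{C}^{n-1}((n-2)\oplus\lambda)=[3]_{q}\mathfrak{C}^{n-1}(\lambda).
\end{align*}
The idea is simply to choose $\lambda$ so that the two concatenations appearing on the left reproduce the staircases $\delta_{n-1}$ and $\delta_{n-2}$, and so that the partition on the right becomes $\delta_{n-3}$.

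The natural choice is $\lambda:=\delta_{n-3}=(n-3,n-4,\ldots,1)$. With this choice I would check the two concatenations directly:
\begin{align*}
(n-1,n-2)\oplus\delta_{n-3}&=(n-1,n-2,n-3,\ldots,1)=\delta_{n-1},\\
(n-2)\oplus\delta_{n-3}&=(n-2,n-3,\ldots,1)=\delta_{n-2}.
\end{align*}
Substituting these three equalities into Lemma~\ref{lemma:Crr2} then yields the asserted relation verbatim.

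Before applying the lemma I would verify that its hypothesis $\lambda_1\le n-3$ is met: for $\lambda=\delta_{n-3}$ we have $\lambda_1=n-3$, so the inequality holds with equality and the lemma is applicable. I would also note the boundary case $n=3$, where $\delta_{n-3}=\delta_{0}=\emptyset$; here the hypothesis is satisfied vacuously, and one checks that the two concatenations still give $\delta_{2}=(2,1)$ and $\delta_{1}=(1)$, so the corollary remains valid.

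As this is a pure specialization, there is essentially no genuine obstacle: the only work is the bookkeeping confirmation that prepending $(n-1,n-2)$ and $(n-2)$ to $\delta_{n-3}$ produces honest staircase partitions, together with the trivial verification of the hypothesis and of the degenerate case $n=3$. No additional combinatorial argument is needed beyond Lemma~\ref{lemma:Crr2} itself.
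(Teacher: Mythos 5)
Your proposal is correct and is exactly the paper's own proof: the paper simply sets $\lambda=\delta_{n-3}$ in Lemma~\ref{lemma:Crr2}, which is the specialization you carry out. Your extra bookkeeping (checking the concatenations yield $\delta_{n-1}$ and $\delta_{n-2}$, the hypothesis $\lambda_1\le n-3$, and the $n=3$ case) is sound and only makes explicit what the paper leaves implicit.
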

\begin{proof}
We set $\lambda=\delta_{n-3}$ in Lemma \ref{lemma:Crr2}.
\end{proof}

Let $\lambda:=(\lambda_1,\ldots,\lambda_l)$ be a partition of length $l$ and 
$\lambda\oplus1:=(\lambda_1,\ldots,\lambda_l,1)$.
We denote $\mathbf{1}_{l}:=(1^{l})$ and 
$\lambda-\mathbf{1}_{l}:=(\lambda_1-1,\ldots,\lambda_{l}-1)$.
\begin{lemma}
\label{lemma:Crr1}
We have 
\begin{align}
\mathfrak{C}^{n}(\lambda)=\mathfrak{C}^{n}(\lambda\oplus1)+
q^{\alpha_1}\mathfrak{C}^{n-1}(\lambda-\mathbf{1}_{l}),
\end{align}
where $\alpha_1:=2n-2-l(\lambda)$.
\end{lemma}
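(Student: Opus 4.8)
The plan is to interpret the difference $\mathfrak{C}^{n}(\lambda)-\mathfrak{C}^{n}(\lambda\oplus1)$ combinatorially and then exhibit a weight-shifting bijection. Since $\lambda$ has exactly $l=l(\lambda)$ positive parts, the boundaries imposed by $\lambda$ and by $\lambda\oplus1$ agree in every row except the $(l+1)$-st from the top: passing from $\lambda$ to $\lambda\oplus1$ forbids precisely the box $(l+1,1)$. Hence my first step is to record that $\mathfrak{C}^{n}(\lambda)-\mathfrak{C}^{n}(\lambda\oplus1)=\sum_{C}q^{\mathrm{inv}(C)}$, where $C$ ranges over Dellac configurations of size $n$ with boundaries $(\lambda,\delta_{n-1})$ whose box $(l+1,1)$ carries a dot.

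Next I would extract the rigid structure forced on such $C$. Because the bottom boundary is the full staircase $\delta_{n-1}$, the only admissible box in the bottom ($2n$-th) row is $(2n,1)$, so the dot of the bottom row always lies in the first column; together with the assumed dot at $(l+1,1)$ this shows that the two dots of column $1$ occupy exactly the rows $l+1$ and $2n$. I would then count the inversions meeting column $1$. The dot at $(l+1,1)$ forms an inversion with every dot lying strictly below it and strictly to its right, and there are exactly $2n-l-2=\alpha_1$ of these: the $2n-l-1$ lower rows, minus the bottom row whose dot sits in column $1$. The dot at $(2n,1)$, being in the leftmost column and lowest row, creates no inversion. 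Thus the dots of column $1$ contribute exactly $\alpha_1$ inversions, and no inversion pairs the two column-$1$ dots with each other.

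The core of the argument is the reduction map $C\mapsto C'$ that deletes column $1$ together with the two rows $l+1$ and $2n$, and then reindexes the surviving rows and columns order-preservingly. Since all remaining dots lie in columns $\ge2$ and in rows different from $l+1$ and $2n$, each surviving row still carries one dot and each surviving column two, so $C'$ is a configuration of size $n-1$; and because relative row and column orders are preserved, every inversion not meeting column $1$ survives, giving $\mathrm{inv}(C')=\mathrm{inv}(C)-\alpha_1$. I would then verify the boundary transformation: deleting column $1$ turns the top boundary $(\lambda_1,\dots,\lambda_l,0,\dots)$ into $(\lambda_1-1,\dots,\lambda_l-1,0,\dots)=\lambda-\mathbf{1}_{l}$, while a direct index computation on the forbidden bottom boxes shows the surviving bottom rows realize the staircase $\delta_{n-2}$. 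The inverse map re-inserts a leftmost column and two rows and places dots at $(l+1,1)$ and $(2n,1)$; checking that it always produces a legitimate configuration with boundaries $(\lambda,\delta_{n-1})$ completes the bijection, and summing over $C$ gives $\mathfrak{C}^{n}(\lambda)-\mathfrak{C}^{n}(\lambda\oplus1)=q^{\alpha_1}\mathfrak{C}^{n-1}(\lambda-\mathbf{1}_{l})$.

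The main obstacle is the bookkeeping in the boundary computation: one must confirm, in the top-indexed conventions used to define configurations with boundaries, that removing rows $l+1$ and $2n$ and shifting indices sends $\delta_{n-1}$ exactly onto $\delta_{n-2}$ at the bottom and $\lambda$ onto $\lambda-\mathbf{1}_{l}$ at the top, and that the inverse insertion never places a dot in a forbidden box. An alternative route, in the spirit of Lemma \ref{lemma:Crr2}, is to expand $\mathfrak{C}^{n}(\lambda)$ and $\mathfrak{C}^{n}(\lambda\oplus1)$ via Proposition \ref{prop:Crr} into size-$(n-1)$ partition functions and to match coefficients; however, the differing lengths of $\lambda$ and $\lambda\oplus1$ make that coefficient comparison less transparent than the bijection, so I would favor the bijective proof above.
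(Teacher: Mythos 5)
Your proof is correct and is essentially the paper's own argument: the paper likewise splits configurations with boundaries $(\lambda,\delta_{n-1})$ according to whether the box $(l+1,1)$ carries a dot, identifies the dot-free class with $\mathfrak{C}^{n}(\lambda\oplus 1)$, and maps the other class bijectively onto size-$(n-1)$ configurations with boundaries $(\lambda-\mathbf{1}_{l},\delta_{n-2})$ by deleting the first column and rows $l+1$ and $2n$, with the dot at $(l+1,1)$ accounting for the weight $q^{\alpha_1}$. Your write-up is in fact somewhat more careful than the paper's, since you explicitly verify the inversion count $\alpha_1=2n-2-l(\lambda)$ and the reindexing of the boundary partitions, which the paper asserts without detail.
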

\begin{proof}
Dellac configurations with boundaries $\lambda$ and $\mu=\delta(n-1)$ are divided into 
two classes. The first class is the set of configurations such that 
the box $(l+1,1)$ has no dots, and the second class contains a dots in
the box $(l+1,1)$ where $l$ is the length of $\lambda$.
The number of configurations of the first class is $\mathfrak{C}^{n}(\lambda\oplus1)$.
For the number of configurations of the second class, note that 
the first column contains two dots in the boxes $(l+1,1)$ and $(2n,1)$ 
since we have $\mu=\delta(n-1)$.
A configuration $C_{1}$ in the second class is bijective to a configuration 
obtained from $C_{1}$ by deleting first column, and $l+1$-th and $2n$-th
rows. By this deletion, we have the boundaries $\lambda-\mathbf{1}_{l}$ 
and $\mu\setminus\{n-1\}=\delta(n-2)$.
The total number of such configurations is 
$\mathfrak{C}^{n-1}(\lambda-\mathbf{1}_{l})$.
Note that the dot in $(l+1,1)$-box has $\alpha_1:=2n-2-l(\lambda)$ inversions. 
So, we have a weight $q^{\alpha_1}$ for $\mathfrak{C}^{n-1}(\lambda-\mathbf{1}_{l})$.
By taking the sum of total number of two classes, this completes the proof.
\end{proof}

\begin{lemma}
\label{lemma:Crr4d}
Let $\lambda$ and $\nu$ be a partition such that 
$\min\{\lambda_{i}: 1\le i\le l(\lambda) \}\ge m+1$ and 
$\max\{\nu_{i}: 1\le i\le l(\nu)\}\le m-1$.
Then, we have 
\begin{align}
\label{eqn:Crr41d}
\begin{aligned}
(1+q^2)\mathfrak{C}^{n}(\lambda\oplus(m)\oplus\nu)
=&\mathfrak{C}^{n}(\lambda\oplus(m-1)\oplus\nu)+
q^{2}\mathfrak{C}^{n}(\lambda\oplus(m+1)\oplus\nu) \\
&+q^{\alpha}\mathfrak{C}^{n-1}((\lambda-\mathbf{1})\oplus\nu),
\end{aligned}
\end{align}
where 
\begin{align}
\alpha=2n-l(\lambda)-m.
\end{align}
\end{lemma}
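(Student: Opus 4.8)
The plan is to localize the identity to the single row whose boundary part is being varied and to reduce it to a two-term relation with a defect. Write $R:=l(\lambda)+1$, counted from the top, for the row carrying the middle part. For an integer $c$ let $f(c)$ denote the generating polynomial $\sum_{C}q^{\mathrm{inv}(C)}$ over all size-$n$ configurations whose boundaries are $\lambda$ on the top $l(\lambda)$ rows, $\nu$ on the rows below row $R$, and $\delta_{n-1}$ at the bottom, and whose unique dot in row $R$ lies in column $c$. Since the three partition functions on the two sides of the claim differ only in the boundary part at row $R$ (namely $m-1$, $m$, $m+1$), each is a cumulative sum $\mathfrak{C}^{n}(\lambda\oplus(k)\oplus\nu)=\sum_{c\ge k+1}f(c)$, whence
\begin{align*}
\mathfrak{C}^{n}(\lambda\oplus(m-1)\oplus\nu)&=f(m)+\mathfrak{C}^{n}(\lambda\oplus(m)\oplus\nu),\\
\mathfrak{C}^{n}(\lambda\oplus(m+1)\oplus\nu)&=\mathfrak{C}^{n}(\lambda\oplus(m)\oplus\nu)-f(m+1).
\end{align*}
Substituting these into Eqn. (\ref{eqn:Crr41d}) and cancelling the terms $(1+q^2)\mathfrak{C}^{n}(\lambda\oplus(m)\oplus\nu)$, the lemma becomes equivalent to the single core identity
\begin{align*}
q^{2}f(m+1)=f(m)+q^{\alpha}\mathfrak{C}^{n-1}((\lambda-\mathbf{1})\oplus\nu).
\end{align*}

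I would then prove this core identity by a local analysis in columns $m$ and $m+1$. Fix every dot of a configuration counted by $f(m)$ or $f(m+1)$ that lies outside these two columns, and call this partial configuration $O$. The hypotheses $\min_{i}\lambda_{i}\ge m+1$ and $\max_{i}\nu_{i}\le m-1$ force all dots in rows $1,\dots,l(\lambda)$ into columns $\ge m+2$, so they belong to $O$; hence the four dots of columns $m,m+1$ occupy the four rows left free by $O$, whose topmost is exactly $R$, say $R=r_{1}<r_{2}<r_{3}<r_{4}$, and the only remaining freedom is how these four rows are split between the two columns. The decisive simplification is that exchanging a row between columns $m$ and $m+1$ crosses no dot of $O$ (all of whose columns avoid $\{m,m+1\}$), so the inversions between the four dots and $O$ depend only on $O$ and contribute a common factor $q^{I_{O}}$; only the internal inversions among the four dots vary. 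Enumerating the $\binom{4}{2}=6$ splittings and counting internal inversions (a cross-column pair is an inversion exactly when its column-$m$ dot is the higher one) yields inversion multisets $\{0,1,2\}$ for the splittings with $r_{1}$ in column $m+1$, which feed $f(m+1)$, and $\{2,3,4\}$ for those with $r_{1}$ in column $m$, which feed $f(m)$.

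The bottom staircase $\delta_{n-1}$ forbids column $m+1$ at row $2n+1-m$ while still permitting column $m$ there, and this splits the $O$'s into two cases. When $r_{4}\le 2n-m$ all six splittings are admissible and $q^{2}f(m+1)_{\mathrm{loc}}=q^{I_{O}}q^{2}(1+q+q^{2})=q^{I_{O}}(q^{2}+q^{3}+q^{4})=f(m)_{\mathrm{loc}}$, so these $O$ contribute nothing to the defect. When $r_{4}=2n+1-m$ the row $r_{4}$ is forced into column $m$, only three splittings survive, and a short computation gives $q^{2}f(m+1)_{\mathrm{loc}}-f(m)_{\mathrm{loc}}=q^{I_{O}+3}$. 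It remains to match $\sum_{O}q^{I_{O}+3}$, over these boundary-obstructed $O$, with $q^{\alpha}\mathfrak{C}^{n-1}((\lambda-\mathbf{1})\oplus\nu)$. For this I would use the bijection sending the distinguished obstructed configuration, the one with $r_{1},r_{3}$ in column $m+1$ and $r_{2},r_{4}$ in column $m$, to the size-$(n-1)$ configuration obtained by deleting column $m+1$ together with its two rows $R$ and $r_{3}$: deleting a column strictly between $\max_{i}\nu_{i}$ and $\min_{i}\lambda_{i}$ lowers every part of $\lambda$ by one and leaves $\nu$ unchanged, deleting row $R$ removes the middle part, and the forced row $2n+1-m$ is precisely what restores the bottom boundary to $\delta_{n-2}$, exactly as in the deletion argument of Lemma \ref{lemma:Crr1}.

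The main obstacle will be the inversion bookkeeping in this final bijection: one must verify that removing the two column-$(m+1)$ dots deletes a number of inversions that is independent of $O$ and equal to $\alpha-2$, so that $q^{I_{O}+3}=q^{\alpha}q^{\mathrm{inv}}$ holds uniformly on the image. The dot $(R,m+1)$ contributes the constant $2n-2m-2-l(\lambda)$ inversions, because columns $\le m+1$ are empty above row $R$; the contribution of the second deleted dot must be shown constant using the Case-II constraint pinning $r_{4}$ at $2n+1-m$, and the compatibility of the whole deletion with the bottom staircase must be checked as in Lemma \ref{lemma:Crr1}. Once these constants are confirmed, the two cases assemble into the core identity and hence into Eqn. (\ref{eqn:Crr41d}).
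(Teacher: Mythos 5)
Your reduction of Eqn.~(\ref{eqn:Crr41d}) to the core identity $q^{2}f(m+1)=f(m)+q^{\alpha}\mathfrak{C}^{n-1}((\lambda-\mathbf{1})\oplus\nu)$ is valid, and so is your local analysis: in Case I ($r_{4}\le 2n-m$) the six splittings produce matching weights $q^{I_{O}}\{q^{2},q^{3},q^{4}\}$ on both sides, and in Case II ($r_{4}=2n+1-m$) the three admissible splittings leave the defect $q^{I_{O}+3}$. This is a genuinely different route from the paper, which proves the lemma by induction on $n$, expanding every term by Proposition~\ref{prop:Crr} and recognizing the residual sum as the expansion of $q^{\alpha}\mathfrak{C}^{n-1}((\lambda-\mathbf{1})\oplus\nu)$; your argument, if completed, would be bijective and induction-free.

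However, the final step fails as stated: the number of inversions destroyed by deleting the two column-$(m+1)$ dots is \emph{not} independent of $O$. The dot $(R,m+1)$ does contribute the constant $2n-2m-2-l(\lambda)$, but the dot $(r_{3},m+1)$ contributes $1+A+B$, where $A$ is the number of dots in columns $<m$ above row $r_{3}$ and $B$ the number of dots in columns $>m+1$ below row $r_{3}$. Using that the $m-1$ rows below row $2n+1-m$ are forced into columns $<m$, one gets $A+B=2n-1-r_{3}-2x$, where $x$ counts the column-$(<m)$ dots strictly between rows $r_{3}$ and $2n+1-m$; constancy would force $r_{3}+2x=2n-m$, which nothing guarantees. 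Concretely, take $n=4$, $m=2$, $\lambda=(3)$, $\nu=\emptyset$ (so $\alpha=5$, $R=2$) and the Case-II $O$ with column-$1$ dots in rows $6,8$ and column-$4$ dots in rows $1,3$ (rows from top). The free rows are $2,4,5,7$, one computes $I_{O}=2$, so this $O$ contributes $q^{5}$ to the defect; but your image $C'$ (delete column $3$ and rows $2,5$ from the distinguished configuration) is the size-$3$ configuration with column contents $\{4,6\},\{3,5\},\{1,2\}$, which has one inversion, giving $q^{\alpha+\mathrm{inv}(C')}=q^{6}\neq q^{5}$. For the neighbouring $O$ with column-$1$ dots in rows $4,8$ the deletion does remove $\alpha-2=3$ inversions, so the defect of your map genuinely varies with $O$. (For $m=1$ the bookkeeping always works because $r_{3},r_{4}$ are then forced to be rows $2n-1,2n$, which is probably why the claim looks plausible.) A secondary problem is that $r_{3}$ may be one of the rows carrying a part of $\nu$, and deleting that row shifts the $\nu$-constraints so that the image need not range bijectively over configurations with boundary $(\lambda-\mathbf{1})\oplus\nu$. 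So the core identity is true but unproven by your construction; you would need either a different, weight-compatible correspondence for the Case-II defect, or an inductive argument as in the paper.
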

\begin{proof}
We prove the lemma by induction.
When $n=3$ and $m=1$, we have 
\begin{align*}
(1+q^2)\mathfrak{C}^{3}((1))=\mathfrak{C}^{3}(\emptyset)+q^2\mathfrak{C}^{3}((2))
+q^5\mathfrak{C}^{2}(\emptyset),
\end{align*}
by a straightforward calculation.

We assume that Eqn. (\ref{eqn:Crr41d}) is true for up to $n-1$ and arbitrary 
$\lambda$ and $\mu$.
From Proposition \ref{prop:Crr}, we have
\begin{align}
\label{eqn:Crr42}
\begin{aligned}
\mathfrak{C}^{n}(\lambda\oplus(m)\oplus\nu)
=&\sum_{1\le i<j\le l(\lambda)}q^{i+j-3}\mathfrak{C}^{n-1}(\lambda^{i,j}\oplus(m)\oplus\nu)
+\sum_{i}q^{i+l(\lambda)-2}\mathfrak{C}^{n-1}(\lambda^{i}\oplus\nu) \\
&+\sum_{1\le i\le l(\lambda)}\sum_{1\le j\le l(\nu)}q^{i+j+l(\lambda)-2}
\mathfrak{C}^{n-1}(\lambda^{i}\oplus(m)\oplus\nu^{j}) \\
&+q^{l(\lambda)+l(\nu)}[c_1]_q\sum_{1\le i\le l(\lambda)}q^{i-1}
\mathfrak{C}^{n-1}(\lambda^{i}\oplus(m)\oplus\nu) \\
&+q^{2l(\lambda)}\sum_{1\le i\le l(\nu)}q^{i-1}\mathfrak{C}^{n-1}(\lambda\oplus\nu^{i})
+q^{l(\lambda)+l(\nu)}[c_1]_{q}\mathfrak{C}^{n-1}(\lambda\oplus\nu) \\
&+q^{2(l(\lambda)+1)}\sum_{1\le i<j\le l(\nu)}q^{i+j-3}\mathfrak{C}^{n-1}(\lambda\oplus(m)\oplus\nu^{i,j})\\
&+q^{2l(\lambda)+l(\nu)+1}[c_1]_q\sum_{1\le i\le l(\nu)}q^{i-1}\mathfrak{C}^{n-1}(\lambda\oplus(m)\oplus\nu^{i}) \\
&+q^{2(l(\lambda)+l(\nu)+1)}c_2\mathfrak{C}^{n-1}(\lambda\oplus(m)\oplus\nu),
\end{aligned}
\end{align}
where 
\begin{align*}
c_1=n-l(\lambda)-l(\nu), \quad c_2=\genfrac{[}{]}{0pt}{}{c_1}{2}_{q}.
\end{align*}
By substituting Eqn. (\ref{eqn:Crr42}) into Eqn. (\ref{eqn:Crr41d}) and 
by induction on $n$, the difference between the left hand side and first two 
terms of the right hand side of Eqn. (\ref{eqn:Crr41d}) is equal to 
$q^{\alpha}$ times 
\begin{multline*}
\sum_{1\le i<j\le l(\lambda)}q^{i+j-3}\mathfrak{C}^{n-2}((\lambda^{i,j}-\mathbf{1})\oplus\nu)
+\sum_{1\le i\le l(\lambda)}\sum_{1\le j\le l(\nu)}q^{i+j+l(\lambda)-2}
\mathfrak{C}^{n-2}((\lambda^{i}-\mathbf{1})\oplus\nu^{j}) \\
\quad+[c_1]_q\sum_{1\le i\le l(\lambda)}q^{i+l(\lambda)+l(\nu)-2}
\mathfrak{C}^{n-2}((\lambda^{i}-\mathbf{1})\oplus\nu) \\
\quad+\sum_{1\le i<j\le l(\nu)}q^{i+j+2l(\lambda)-4}\mathfrak{C}^{n-2}((\lambda-\mathbf{1})\oplus\nu^{i,j}) \\
\quad+[c_1]_q\sum_{i}q^{i+l(\lambda)-1}\mathfrak{C}^{n-2}((\lambda-\mathbf{1})\oplus\nu^{i})
+c_2 q^{2l(\lambda)+2l(\nu)}\mathfrak{C}^{n-2}((\lambda-\mathbf{1})\oplus\nu),
\end{multline*}
which is equal to $q^{\alpha}\mathfrak{C}^{n-1}((\lambda-\mathbf{1})\oplus\nu)$.
\end{proof}

\begin{remark}
The value $\alpha$ in Eqn. (\ref{eqn:Crr41}) can be expressed by 
\begin{align*}
\alpha=2+M^{n}(\lambda\oplus(m)\oplus\nu)-M^{n-1}((\lambda-\mathbf{1}_{l(\lambda)})\oplus\nu).
\end{align*}
Thus, the value $\alpha$ detects the difference of the degrees of the top terms in partition functions 
$\mathfrak{C}^{n}(\lambda\oplus(m)\oplus\nu)$ and $\mathfrak{C}^{n-1}((\lambda-\mathbf{1})\oplus\nu)$.
\end{remark}

In Lemma \ref{lemma:Crr4}, we have a recurrence relation 
for four terms. 
The choice of the coefficients is not uniquely fixed.
In the following lemma, we give another recurrence relation 
for the same four terms.
\begin{lemma}
\label{lemma:Crr4}
Let $\lambda$ and $\nu$ be a Young diagram such that 
$\min\{\lambda_{i}: 1\le i\le l(\lambda) \}\ge m+1$ and 
$\max\{\nu_{i}: 1\le i\le l(\nu)\}\le m-1$.
Then, we have 
\begin{align}
\label{eqn:Crr41}
\begin{aligned}
(1+q)\mathfrak{C}^{n}(\lambda\oplus(m)\oplus\nu)
=&\mathfrak{C}^{n}(\lambda\oplus(m-1)\oplus\nu)+
q\mathfrak{C}^{n}(\lambda\oplus(m+1)\oplus\nu) \\
&+q^{\alpha_0}\mathfrak{C}^{n-1}((\lambda-\mathbf{1})\oplus\nu),
\end{aligned}
\end{align}
where 
\begin{align}
\alpha_0=2n-m-2-l(\lambda).
\end{align}
\end{lemma}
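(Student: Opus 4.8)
Write $A:=\mathfrak{C}^{n}(\lambda\oplus(m)\oplus\nu)$, $B:=\mathfrak{C}^{n}(\lambda\oplus(m-1)\oplus\nu)$, $C:=\mathfrak{C}^{n}(\lambda\oplus(m+1)\oplus\nu)$ and $D:=\mathfrak{C}^{n-1}((\lambda-\mathbf{1})\oplus\nu)$. These are exactly the four partition functions that already occur in Lemma \ref{lemma:Crr4d}, which under the same hypotheses asserts $(1+q^{2})A=B+q^{2}C+q^{\alpha}D$ with $\alpha=\alpha_{0}+2$. Since the present statement concerns the same four terms with different coefficients, the plan is to assume Lemma \ref{lemma:Crr4d} and to establish a single additional relation among $A,B,C,D$ strong enough to pin down the $(1+q)$-combination; eliminating $D$ will then finish the proof.

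The extra relation I would prove is
\begin{align}
\label{eq:aux}
B-A=q^{\alpha_{0}+1}D .
\end{align}
Granting \eqref{eq:aux}, substituting $B=A+q^{\alpha_{0}+1}D$ into the relation of Lemma \ref{lemma:Crr4d} gives $q^{2}A=q^{2}C+q^{\alpha_{0}+1}(1+q)D$, hence $A-C=q^{\alpha_{0}-1}(1+q)D$. Consequently $B+qC+q^{\alpha_{0}}D=A+q^{\alpha_{0}+1}D+qC+q^{\alpha_{0}}D=A+q\bigl(C+q^{\alpha_{0}-1}(1+q)D\bigr)=A+qA=(1+q)A$, which is precisely Lemma \ref{lemma:Crr4}. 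Thus the whole content of the lemma is concentrated in \eqref{eq:aux}.

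It remains to prove \eqref{eq:aux}, which is the generalization of Lemma \ref{lemma:Crr1} to arbitrary $m$ and $\nu$ (the case $m=1$, $\nu=\emptyset$ is Lemma \ref{lemma:Crr1}, since then $q^{\alpha_{0}+1}=q^{2n-2-l(\lambda)}$). Combinatorially, $B-A$ is the generating polynomial of the configurations counted by $B$ whose dot in row $r:=l(\lambda)+1$ from the top lies in column exactly $m$: the configurations counted by $A$ are those counted by $B$ whose $r$-th dot lies in column $\ge m+1$, and inversions are independent of the boundary. The hypotheses $\min_{i}\lambda_{i}\ge m+1$ and $\max_{i}\nu_{i}\le m-1$ force columns $1,\dots,m-1$ to be blocked in all of rows $1,\dots,r$, so both dots of each of those columns and the second dot of column $m$ lie strictly below this distinguished dot; deleting column $m$ together with the two rows carrying its dots then yields a configuration of size $n-1$ with boundaries $(\lambda-\mathbf{1})\oplus\nu$ and $\delta_{n-2}$, and this map is a bijection. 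The main obstacle is the weight bookkeeping: a plain deletion removes a number of inversions that depends on the height of the second dot of column $m$, so the uniform exponent $q^{\alpha_{0}+1}$ is not visible term by term. I would therefore prove \eqref{eq:aux} by induction on $n$, expanding $A$, $B$ and $D$ by the last-column recurrence of Proposition \ref{prop:Crr} (as in \eqref{eqn:Crr42}) and comparing the coefficients of each $\mathfrak{C}^{n-2}(\mu)$ on the two sides, exactly as in the proof of Lemma \ref{lemma:Crr4d}, with the base case $n\le 3$ checked directly. Verifying that these coefficients match---equivalently, that the inversions destroyed by the deletion telescope to the single power $q^{\alpha_{0}+1}$---is the one genuinely computational point.
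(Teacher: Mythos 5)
Your algebraic reduction is correct as far as it goes, but it is also reversible: subtracting the relation of Lemma \ref{lemma:Crr4d} from the desired one and cancelling the factor $q(q-1)$ shows that, modulo Lemma \ref{lemma:Crr4d}, your auxiliary identity $B-A=q^{\alpha_{0}+1}D$ is \emph{equivalent} (as an identity of polynomials in $q$) to Lemma \ref{lemma:Crr4}, so the entire content of the lemma is concentrated there. And that identity is false as soon as $m\ge 2$ --- not merely as a $q$-identity but already at $q=1$. Take $n=4$, $m=2$, $\lambda=\nu=\emptyset$, so that $\alpha_{0}+1=5$, $B=\mathfrak{C}^{4}((1))$, $A=\mathfrak{C}^{4}((2))$, $D=\mathfrak{C}^{3}(\emptyset)$. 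Expanding by Proposition \ref{prop:Crr} and using the values of Example \ref{ex:qGDC}, at $q=1$ one finds
\begin{align*}
B|_{q=1}=4\,C^{3}(\emptyset)+6\,C^{3}((1))=162,
\qquad
A|_{q=1}=4\,C^{3}(\emptyset)+6\,C^{3}((2))=126,
\qquad
D|_{q=1}=18,
\end{align*}
hence $B-A=36\neq 18$ at $q=1$. (Similarly with $\nu=(1)$: $C^{4}((1,1))-C^{4}((2,1))=144-111=33\neq 15=C^{3}((1))$, where $111$ is the table entry for $C^{4}(\delta(3:3))$.) A numerically false identity cannot be established by the induction you propose, and your base cases would not catch the problem: for $n\le 3$ the part $m+1$ can only fit inside $\delta_{n-1}$ when $m=1$, and for $m=1$ (forcing $\nu=\emptyset$) your identity is exactly Lemma \ref{lemma:Crr1}, which is true. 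The first failures occur at $n=4$, $m=2$.

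The falsehood enters in the combinatorial step. Identifying $B-A$ with the generating function of configurations having a dot in row $l(\lambda)+1$, column $m$, is fine; what fails is the claim that deleting column $m$ together with the two rows carrying its dots is a weight-shifting bijection onto configurations of size $n-1$ with boundaries $(\lambda-\mathbf{1})\oplus\nu$ and $\delta_{n-2}$. That argument works in Lemma \ref{lemma:Crr1} only because $m=1$ there: the bottom staircase $\delta_{n-1}$ leaves column $1$ as the unique admissible column of the bottom row, so the second dot of column $1$ is pinned to row $2n$, the two deleted rows are canonical, and the bottom boundary of the image is again a staircase. For $m\ge 2$ the second dot of column $m$ may occupy many different rows; deleting such a row does not carry $\delta_{n-1}$ to $\delta_{n-2}$, the image is not the set counted by $\mathfrak{C}^{n-1}((\lambda-\mathbf{1})\oplus\nu)$, and the discrepancy $36$ against $18$ above shows this is not a weight-bookkeeping issue: the two sets have different cardinalities. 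Note also that your route differs genuinely from the paper's, which never invokes Lemma \ref{lemma:Crr4d} or any analogue of your identity, but expands both sides of Eqn.~(\ref{eqn:Crr41}) by Proposition \ref{prop:Crr} and compares coefficients inductively. One caution arising from the same data: the coefficient of $q^{3}$ in $(1+q)\mathfrak{C}^{4}((2))$ is $22$, while in $\mathfrak{C}^{4}((1))+q\mathfrak{C}^{4}((3))+q^{4}\mathfrak{C}^{3}(\emptyset)$ it is $21$, so in this instance Eqn.~(\ref{eqn:Crr41}) itself appears to hold only at $q=1$, where it coincides with Lemma \ref{lemma:Crr4d}; but whatever the precise status of the lemma, your proof of it breaks at the auxiliary identity.
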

\begin{proof}
The proof is essentially same as the proof of Lemma \ref{lemma:Crr4d}.
We expand both sides of Eqn. (\ref{eqn:Crr41}) by Proposition \ref{prop:Crr}, 
and show that they are equal by induction.
\end{proof}

\begin{lemma}
\label{lemma:Crr5}
We have 
\begin{align}
\label{eqn:Crr3}
\begin{aligned}
&\sum_{1\le i<j\le l(\lambda)}q^{i+j-3}\mathfrak{C}^{n-2}(\lambda^{i,j}\oplus\nu)
+\sum_{1\le i\le l(\lambda)}\sum_{1\le j\le l(\nu)}q^{i+j+l(\lambda)-3}\mathfrak{C}^{n-2}(\lambda^{i}\oplus\nu^{j}) \\
&\qquad+[c_1]_q \sum_{1\le i\le l(\lambda)}q^{i+l(\lambda)+l(\nu)-2}\mathfrak{C}^{n-2}(\lambda^{i}\oplus\nu) 
+ [c_1]_q\sum_{1\le i\le l(\nu)}q^{i+2l(\lambda)+l(\nu)-2}\mathfrak{C}^{n-2}(\lambda\oplus\nu^{i}) \\ 
&\qquad+\sum_{1\le i<j\le l(\nu)}q^{i+j+2l(\lambda)-3}\mathfrak{C}^{n-2}(\lambda\oplus\nu^{i,j})
+ c_2 q^{2(l(\lambda)+l(\nu))} \mathfrak{C}^{n-2}(\lambda\oplus\nu) \\
&\qquad=\mathfrak{C}^{n-1}(\lambda\oplus\nu)-q^{n-2}\mathfrak{C}^{n-1}((n-2)\oplus\lambda\oplus\nu),
\end{aligned}
\end{align}
where
\begin{align*}
c_{1}=n-1-l(\lambda)-l(\nu), \quad
c_{2}=\genfrac{[}{]}{0pt}{}{c_1}{2}_{q}.
\end{align*}
\end{lemma}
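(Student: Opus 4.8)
The plan is to expand both terms on the right-hand side of Eqn.~(\ref{eqn:Crr3}) by Proposition~\ref{prop:Crr} and to verify that their difference reassembles into the six groups of terms on the left. Throughout I regard $\lambda\oplus\nu$ as a partition of length $n$ padded with zeros, so that positions $1,\ldots,l(\lambda)$ carry the parts of $\lambda$, positions $l(\lambda)+1,\ldots,l(\lambda)+l(\nu)$ carry the parts of $\nu$, and positions $l(\lambda)+l(\nu)+1,\ldots,n$ are the zero parts. I assume $\lambda_1\le n-3$, as is needed for $(n-2)\oplus\lambda\oplus\nu$ to be a valid boundary of size $n-1$.

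First I would expand $\mathfrak{C}^{n-1}(\lambda\oplus\nu)$. Since $\lambda_1\le n-3\le (n-1)-2$, case (2) of Proposition~\ref{prop:Crr} applies and gives
\begin{align*}
\mathfrak{C}^{n-1}(\lambda\oplus\nu)=\sum_{1\le i<j\le n}q^{i+j-3}\mathfrak{C}^{n-2}((\lambda\oplus\nu)^{i,j}).
\end{align*}
Next I would expand $\mathfrak{C}^{n-1}((n-2)\oplus\lambda\oplus\nu)$. Its largest part equals $n-2=(n-1)-1$, so case (1) of Proposition~\ref{prop:Crr} applies; since position $i$ of $(n-2)\oplus\lambda\oplus\nu$ is position $i-1$ of $\lambda\oplus\nu$, reindexing by $i'=i-1$ yields
\begin{align*}
q^{n-2}\mathfrak{C}^{n-1}((n-2)\oplus\lambda\oplus\nu)=\sum_{1\le i\le n-1}q^{i+n-3}\mathfrak{C}^{n-2}((\lambda\oplus\nu)^{i}).
\end{align*}
The key observation is that this is exactly the $j=n$ slice of the previous double sum: position $n$ is a zero part, so $(\lambda\oplus\nu)^{i,n}=(\lambda\oplus\nu)^{i}$. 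Subtracting, I obtain
\begin{align*}
\mathfrak{C}^{n-1}(\lambda\oplus\nu)-q^{n-2}\mathfrak{C}^{n-1}((n-2)\oplus\lambda\oplus\nu)=\sum_{1\le i<j\le n-1}q^{i+j-3}\mathfrak{C}^{n-2}((\lambda\oplus\nu)^{i,j}).
\end{align*}

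It then remains to split this sum according to where the removed positions $i<j$ fall. When both lie among the $\lambda$-positions I recover the first term; when one lies in $\lambda$ and the other in $\nu$ (at position $l(\lambda)+j$) I recover the second term; the two-$\nu$ case gives the fifth term. The remaining cases involve a zero position, and summing the geometric series over the $c_1=n-1-l(\lambda)-l(\nu)$ zero positions that survive in $\{1,\ldots,n-1\}$ produces the factor $[c_1]_q$, while the both-zero case produces $c_2=\genfrac{[}{]}{0pt}{}{c_1}{2}_q$ through the standard identity $\genfrac{[}{]}{0pt}{}{c_1}{2}_q=\sum_{1\le a<b\le c_1}q^{a+b-3}$. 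Matching the exponents then reproduces the third, fourth and sixth terms.

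The genuinely delicate point is the positional bookkeeping: one must keep straight that $c_1=n-1-l(\lambda)-l(\nu)$ rather than $n-l(\lambda)-l(\nu)$, precisely because the topmost zero position $n$ has already been peeled off into the subtracted term, and one must evaluate the geometric sums over the zero positions with the correct offsets so as to land on the stated exponents $q^{i+l(\lambda)+l(\nu)-2}$, $q^{i+2l(\lambda)+l(\nu)-2}$ and $q^{2(l(\lambda)+l(\nu))}$. Once this indexing is fixed the rest is a routine comparison of exponents, so I expect no structural difficulty beyond this careful accounting.
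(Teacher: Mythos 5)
Your proposal is correct and takes essentially the same approach as the paper: the paper's proof consists of applying Proposition \ref{prop:Crr} to the right-hand side of Eqn.~(\ref{eqn:Crr3}) and comparing terms, which is exactly your computation. You have simply made explicit the details the paper leaves to the reader, namely the cancellation of the $j=n$ slice (removal of the topmost zero part) against the expansion of $q^{n-2}\mathfrak{C}^{n-1}((n-2)\oplus\lambda\oplus\nu)$, and the regrouping of the surviving pairs $(i,j)$ according to whether they hit $\lambda$-positions, $\nu$-positions, or zero positions, with the geometric sums over zero positions producing $[c_1]_q$ and $c_2$.
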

\begin{proof}
One can easily obtain Eqn. (\ref{eqn:Crr3}) by applying Proposition \ref{prop:Crr}
to the right hand side of Eqn. (\ref{eqn:Crr3}).
\end{proof}

We will establish a recurrence relation among three terms by 
using Lemmas \ref{lemma:Crr4} and \ref{lemma:Crr5}.
\begin{theorem}
\label{thrm:Crr}
Let $\lambda:=(\lambda_1,\ldots,\lambda_{l})$ be a Young diagram such that 
$\lambda_{i}\ge m+1$ for all $1\le i\le l$.
Similarly, let $\nu:=(\nu_{1},\ldots,\nu_{l'})$ be a Young diagram such that  
$\nu_{1}\le m-1$.
We define 
\begin{align*}
\kappa_{0}&=(\lambda,m,m,\nu), \\
\kappa_{1}&=(\lambda,m+1,m-1,\nu), \\
\kappa_{2}&=(\lambda-\mathbf{1}_{l},\nu).
\end{align*}
Then, we have
\begin{align}
\label{eqn:rrCn}
\mathfrak{C}^{n}(\kappa_0)=\mathfrak{C}^{n}(\kappa_1)+q^{\beta}\mathfrak{C}^{n-1}(\kappa_2),
\end{align}
where 
\begin{align*}
\beta=2(n-m-1)-l(\lambda).
\end{align*}
\end{theorem}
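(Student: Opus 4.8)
The plan is to prove the three-term relation by isolating the difference $\mathfrak{C}^{n}(\kappa_0)-\mathfrak{C}^{n}(\kappa_1)$ and showing it equals $q^{\beta}\mathfrak{C}^{n-1}(\kappa_2)$. First I would record the combinatorial content of the two boundaries: $\kappa_0=\lambda\oplus(m,m)\oplus\nu$ and $\kappa_1=\lambda\oplus(m+1,m-1)\oplus\nu$ differ only in the two rows carrying the central values, so a configuration counted by one but not the other is governed entirely by the two cells $(l+1,m+1)$ and $(l+2,m)$ (rows counted from the top). Writing the common forbidden region as $R$, every configuration avoiding $R$ falls into four classes according to the occupancy of these two cells, and one checks directly that $\mathfrak{C}^{n}(\kappa_0)-\mathfrak{C}^{n}(\kappa_1)$ is the difference of the weighted counts of the two ``mixed'' classes (a dot at exactly one of the two cells). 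This identifies what must be matched with $q^{\beta}\mathfrak{C}^{n-1}(\kappa_2)$ and explains the shape of the exponent: the forced central dot producing $\kappa_2$ sits $m$ columns to the left of the one appearing in Lemma~\ref{lemma:Crr4}, which is consistent with $\beta=\alpha_0-m$, where $\alpha_0=2n-m-2-l(\lambda)$ is the exponent of that lemma.

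To carry out the matching I would argue by induction on $n$, mirroring the inductive proofs of Lemmas~\ref{lemma:Crr4d} and \ref{lemma:Crr4}. Expand both $\mathfrak{C}^{n}(\kappa_0)$ and $\mathfrak{C}^{n}(\kappa_1)$ by Proposition~\ref{prop:Crr} (peeling the rightmost column), treating the boundary case $\lambda_1=n-1$ and the generic case $\lambda_1\le n-2$ separately. In the resulting double sum over the removed parts $\{i,j\}$, the term removing exactly the two central parts cancels, since $\kappa_0\setminus\{m,m\}=\kappa_1\setminus\{m+1,m-1\}=\lambda\oplus\nu$. The terms removing neither central part produce, in each summand, a difference of the same shape with $n$ replaced by $n-1$; by the induction hypothesis each such difference is $q^{\beta'}\mathfrak{C}^{n-2}$ of a reduced $\kappa_2$, and these are legitimate instances because the reduced $\lambda$ still has all parts $\ge m+1$ and the reduced $\nu$ still has all parts $\le m-1$.

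The remaining terms, in which exactly one central part is removed, are single-central-value configurations, and here Lemma~\ref{lemma:Crr4} applies directly: I would use it to rewrite each such contribution and collect the $q$-powers coming from the factors $q^{i+j-3}$. Finally I would expand the target $q^{\beta}\mathfrak{C}^{n-1}(\kappa_2)$ by Proposition~\ref{prop:Crr} into $\mathfrak{C}^{n-2}$ terms and invoke Lemma~\ref{lemma:Crr5} — whose right-hand side is exactly $\mathfrak{C}^{n-1}(\lambda\oplus\nu)-q^{n-2}\mathfrak{C}^{n-1}((n-2)\oplus\lambda\oplus\nu)$ — to repackage the accumulated $\mathfrak{C}^{n-2}$ sum together with the single-central contributions into one closed term. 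Comparing the coefficients of each $\mathfrak{C}^{n-2}(\mu)$ on the two sides then closes the inductive step, with the base case handled as in Lemmas~\ref{lemma:Crr4d} and \ref{lemma:Crr4}.

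I expect the main obstacle to be twofold. The conceptual difficulty is the asymmetry already visible at the level of Lemma~\ref{lemma:Crr4}: the pair $(m+1,m-1)$ of $\kappa_1$ is reachable by a single admissible central shift (applying Lemma~\ref{lemma:Crr4} to $\lambda\oplus(m+1,m)\oplus\nu$ with left block $\lambda\oplus(m+1)$ meets the hypotheses), whereas the pair $(m,m)$ of $\kappa_0$ lies exactly on the boundary where no admissible single shift exists, so $\kappa_0$ must be accessed only through the column-peeling recursion and the cancellation above rather than by a direct use of the lemma. The technical difficulty is the exponent bookkeeping: one must verify that the powers of $q$ coming from the prefactors $q^{i+j-3}$, from the induction hypothesis, and from Lemmas~\ref{lemma:Crr4} and \ref{lemma:Crr5} assemble precisely into $\beta=2(n-m-1)-l(\lambda)$, including in the tight edge cases $\lambda_{l}=m+1$ and $\nu_{1}=m-1$ where the parts adjacent to the central pair are as large, respectively as small, as the hypotheses permit.
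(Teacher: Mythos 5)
Your proposal is correct and takes essentially the same route as the paper's own proof: induction on $n$, expansion of both $\mathfrak{C}^{n}(\kappa_0)$ and $\mathfrak{C}^{n}(\kappa_1)$ by Proposition \ref{prop:Crr}, cancellation of the common term $q^{2l(\lambda)}\mathfrak{C}^{n-1}(\lambda\oplus\nu)$ coming from removing both central parts, the induction hypothesis on the terms leaving the central pair intact, Lemma \ref{lemma:Crr4} on the terms removing exactly one central part, and Lemma \ref{lemma:Crr5} together with the expansion of $\mathfrak{C}^{n-1}((n-2)\oplus(\lambda-\mathbf{1})\oplus\nu)$ to repackage the accumulated $\mathfrak{C}^{n-2}$ terms into $q^{\beta}\mathfrak{C}^{n-1}(\kappa_2)$. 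Your preliminary cell-occupancy framing and the check $\beta=\alpha_0-m$ are sound additions, but the core argument coincides with the paper's.
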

\begin{proof}
We prove the theorem by induction.
When $n=2$, we have 
\begin{align}
\mathfrak{C}^{2}((1,1))=\mathfrak{C}^{1}(\emptyset)=1,
\end{align}
by a straightforward computation using Proposition \ref{prop:Crr}.

We assume that the recurrence relation (\ref{eqn:rrCn}) is true 
up to $n-1$.
We expand $\mathfrak{C}^{n}(\kappa_{i})$ with $i=\{0,1\}$ by Proposition \ref{prop:Crr}.
More precisely, we have 
\begin{align*}
\begin{aligned}
\mathfrak{C}^{n}(\kappa_{0})
=&\sum_{1\le i<j\le l(\lambda)}q^{i+j-3}\mathfrak{C}^{n-1}(\lambda^{i,j}\oplus(m,m)\oplus\nu)
+[2]_q\sum_{1\le i\le l(\lambda)}q^{i+l(\lambda)-2}\mathfrak{C}^{n-1}(\lambda^{i}\oplus(m)\oplus\nu)  \\
&+\sum_{1\le i\le l(\lambda)}\sum_{1\le j\le l(\nu)}q^{i+j+l(\lambda)-1}
\mathfrak{C}^{n-1}(\lambda^{i}\oplus(m,m)\oplus\nu^{j}) \\
&+[c_1]_q\sum_{1\le i\le l(\lambda)}q^{i+l(\lambda)+l(\nu)}\mathfrak{C}^{n-1}(\lambda^{i}\oplus(m,m)\oplus\nu) \\
&+q^{2l(\lambda)}\mathfrak{C}^{n-1}(\lambda\oplus\nu)
+[2]_q\sum_{i}q^{i+l(\lambda)+l(\nu)}\mathfrak{C}^{n-1}(\lambda\oplus(m)\oplus\nu^{i})  \\
&+[2]_q[c_1]_qq^{2l(\lambda)+l(\nu)+1}\mathfrak{C}^{n-1}(\lambda\oplus(m)\oplus\nu) \\
&+\sum_{1\le i<j\le l(\nu)}q^{i+j+2l(\lambda)+1}\mathfrak{C}^{n-1}(\lambda\oplus(m,m)\oplus\nu^{i,j})  \\
&+[c_1]_q\sum_{1\le i\le l(\nu)}q^{i+2l(\lambda)+2+l(\nu)}\mathfrak{C}^{n-1}(\lambda\oplus(m,m)\oplus\nu^{i}) \\
&+c_2 q^{2l(\lambda)+2l(\nu)+4}\mathfrak{C}^{n-1}(\lambda\oplus(m,m)\oplus\nu),
\end{aligned}
\end{align*}
and 
\begin{align*}
\begin{aligned}
\mathfrak{C}^{n}(\kappa_{1})
=&\sum_{1\le i<j\le l(\lambda)}q^{i+j-3}\mathfrak{C}^{n-1}(\lambda^{i,j}\oplus(m+1,m-1)\oplus\nu) \\
&+\sum_{1\le i\le l(\lambda)}q^{i+l(\lambda)-1}\mathfrak{C}^{n-1}(\lambda^{i}\oplus(m+1)\oplus\nu) \\
&+ \sum_{1\le i\le l(\lambda)}q^{i+l(\lambda)-2}\mathfrak{C}^{n-1}(\lambda^{i}\oplus(m-1)\oplus\nu) \\
&+\sum_{1\le i\le l(\lambda)}\sum_{1\le j\le l(\nu)}q^{i+j+l(\lambda)-1}
\mathfrak{C}^{n-1}(\lambda^{i}\oplus(m+1,m-1)\oplus\nu^{j}) \\
&+[c_1]_q\sum_{1\le i\le l(\lambda)}q^{i+l(\lambda)+l(\nu)}
\mathfrak{C}^{n-1}(\lambda^{i}\oplus(m+1,m-1)\oplus\nu) \\
&+q^{2l(\lambda)}\mathfrak{C}^{n-1}(\lambda\oplus\nu) 
+\sum_{1\le i\le l(\nu)}q^{i+2l(\lambda)+1}\mathfrak{C}^{n-1}(\lambda\oplus(m+1)\oplus\nu^{i}) \\
&+\sum_{1\le i\le l(\nu)}q^{i+2l(\lambda)}\mathfrak{C}^{n-1}(\lambda\oplus(m-1)\oplus\nu^{i})  \\
&+[c_1]_q q^{2l(\lambda)+l(\nu)+2}\mathfrak{C}^{n-1}(\lambda\oplus(m+1)\oplus\nu)
+[c_1]_qq^{2l(\lambda)+l(\nu)+1}\mathfrak{C}^{n-1}(\lambda\oplus(m-1)\oplus\nu) \\
&+\sum_{1\le i<j\le l(\nu)}q^{i+j+2l(\lambda)+1}\mathfrak{C}^{n-1}(\lambda\oplus(m+1,m-1)\oplus\nu^{i,j})\\
&+[c_1]_q\sum_{1\le i\le l(\nu)}q^{i+2l(\lambda)+l(\nu)+2}\mathfrak{C}^{n-1}(\lambda\oplus(m+1,m-1)\oplus\nu^{i}) \\
&+c_2q^{2l(\lambda)+2l(\lambda)+4}\mathfrak{C}^{n-1}(\lambda\oplus(m+1,m-1)\oplus\nu),
\end{aligned}
\end{align*}
where $c_1=n-1-l(\lambda)-l(\nu)$ and $c_2=\genfrac{[}{]}{0pt}{}{c_1}{2}_q$.
From these expressions, Lemmas \ref{lemma:Crr4} and \ref{lemma:Crr5}, 
and
\begin{align*}
\mathfrak{C}^{n-1}((n-2)\oplus\lambda'\oplus\nu)
&=\sum_{1\le i\le l(\lambda')}q^{i-1}\mathfrak{C}^{n-1}(\lambda'^{i}\oplus\nu) 
+\sum_{1\le i\le l(\nu)}q^{i+l(\lambda')-1}\mathfrak{C}^{n-2}(\lambda'\oplus\nu^{i}) \\
&\qquad+[c_1]_qq^{l(\lambda)+l(\nu)}\mathfrak{C}^{n-2}(\lambda'\oplus\nu),
\end{align*}
where $\lambda'=\lambda-\mathbf{1}$, 
we have 
\begin{align*}
\mathfrak{C}^{n}(\kappa_0)-\mathfrak{C}^{n}(\kappa_1)
&=q^{\beta}\mathfrak{C}^{n-1}((\lambda-\mathbf{1})\oplus\nu) \\
&=q^{\beta}\mathfrak{C}^{n-1}(\kappa_{2}).
\end{align*}
This completes the proof.
\end{proof}

We have three kinds of recurrence relations for $C^{n}(\lambda)$, Proposition \ref{prop:Crr},
Lemma \ref{lemma:Crr2} and Lemma \ref{lemma:Crr4}.
To compute the total number of Dellac configurations by Proposition \ref{prop:Crr}, 
we first compute the values $C^{n}(\lambda)$, where $\lambda=\delta_{n-1}\setminus\{i\}$ 
with $1\le i\le n-1$.
Since such $\lambda$'s play a central role in the computation,
it is natural to ask whether the value $C^{n}(\mu)$ is expressed in terms of 
such $C^{n}(\lambda)$'s.
Here, $\mu$ is a partition such that each integer appears at most twice 
in $\mu$. 
The following theorem implies that there exists an expression of $C^{n}(\mu)$
in terms of $C^{n}(\lambda)$'s.

\begin{theorem}
\label{thrm:lc}
Let $\lambda$ be a partition of size $l$ such that each integer appears at most twice in $\lambda$.
Then, the value $C^{n}_{\lambda}$ can be expressed as a linear combination of $C^{n'}_{\mu}$ 
such that $n'-l(\mu)=1$ or $2$.
\end{theorem}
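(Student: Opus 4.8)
The plan is to argue by strong induction on $n$, treating the partitions with $n'-l(\mu)\in\{1,2\}$ as the base of the reduction. Write $d(\lambda):=n-l(\lambda)$ for the \emph{defect}; since $\lambda\subseteq\delta_{n-1}$ one always has $d(\lambda)\ge1$, and the target partitions are exactly those with $d=1$ or $2$. For $n=1$ the only partition is $\lambda=\emptyset$, of defect $1$, so the base case is immediate. For the inductive step I may assume the statement for every $n'<n$; in particular every symbol $\mathfrak{C}^{n-1}(\mu)$ produced below is already, by the induction hypothesis, a linear combination of the desired form. Hence it suffices to rewrite $\mathfrak{C}^{n}(\lambda)$, modulo such $\mathfrak{C}^{n-1}$-terms, as a combination of $\mathfrak{C}^{n}(\mu)$ with $d(\mu)\le2$.

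First I would run a same-$n$ reduction. If $d(\lambda)\le2$ there is nothing to do. Otherwise I apply, whenever it keeps us inside the admissible class, the relation that lowers the defect: Lemma \ref{lemma:Crr1} appends a part $1$ (raising $l(\lambda)$, and so lowering $d$ by one), which is legal as long as $\lambda$ contains at most one $1$ and $d(\lambda)\ge2$. When that is blocked I instead spread a repeated part: because each value occurs at most twice, a repeated part is a pair $(m,m)$ flanked by parts $>m$ on the left and $<m$ on the right, i.e.\ exactly the shape $\kappa_0$ of Theorem \ref{thrm:Crr}, and provided neither $m-1$ nor $m+1$ is already doubled, applying that theorem replaces $(m,m)$ by $(m+1,m-1)$ at the cost of a $\mathfrak{C}^{n-1}$-term. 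Both moves strictly increase the statistic $\sum_i\lambda_i^2$ (appending a $1$ adds $1$, spreading adds $2$), which is bounded by $\sum_{k=1}^{n-1}k^2$ on partitions inside $\delta_{n-1}$; hence the same-$n$ part of the recursion terminates. I stop each branch as soon as the defect reaches $2$, so the surviving same-$n$ leaves have $d\le2$, while every branch has emitted only $\mathfrak{C}^{n-1}$-terms, which the induction hypothesis absorbs.

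The step I expect to be the genuine obstacle is that \emph{both} relations can stall while the defect is still $\ge3$. This happens exactly at a run of consecutive doubled small values, the minimal example being $\lambda=(2,2,1,1)$, which has defect $3$ once $n\ge7$: spreading $(2,2)$ would create a third $1$, spreading $(1,1)$ a third $2$, and appending a $1$ a third $1$, so no move preserves ``each value at most twice''. For such stuck configurations I would abandon the same-$n$ reduction and invoke Proposition \ref{prop:Crr} directly, which expresses $\mathfrak{C}^{n}(\lambda)$ as a $q$-linear combination of the $\mathfrak{C}^{n-1}(\lambda^{i,j})$; deleting two parts only lowers multiplicities, so each $\lambda^{i,j}$ stays in the class, and for $d(\lambda)\ge3$ it is a legitimate boundary of size $n-1$, whence the induction hypothesis finishes it. This forced drop in $n$ is precisely what makes sizes $n'<n$, and defect $2$ rather than only the staircase, appear in the statement.

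Two routine points then complete the argument. First, one checks that all partitions generated stay admissible: the side term $\kappa_2=(\lambda-\mathbf{1})\oplus\nu$ of Theorem \ref{thrm:Crr} and the term $\lambda-\mathbf{1}_{l}$ of Lemma \ref{lemma:Crr1} are downward shifts, preserving multiplicities and landing inside $\delta_{n-2}$, while a spread $\kappa_1$ remains inside $\delta_{n-1}$ by the staircase bound on the position of $(m,m)$ (and we only spread when it creates no third copy). Second, termination of the whole procedure follows from the lexicographic decrease of the pair $\bigl(n,\ \sum_{k=1}^{n-1}k^2-\sum_i\lambda_i^2\bigr)$, since Proposition \ref{prop:Crr} lowers $n$ and the two same-$n$ relations raise $\sum_i\lambda_i^2$. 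Collecting the accumulated coefficients, which are monomials in $q$, then writes $\mathfrak{C}^{n}(\lambda)$ as a linear combination of $\mathfrak{C}^{n'}(\mu)$ with $n'-l(\mu)\in\{1,2\}$, as claimed.
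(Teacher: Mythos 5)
Your route is genuinely different from the paper's: the paper inducts on the defect $n-l(\lambda)$ and uses Lemma \ref{lemma:Crr2} to \emph{prepend} the large parts $(n,n-1)$ or $(n-1)$, pushing $n$ upward (which is why its expansions contain terms with $n'>n$, as in the remark following the theorem), whereas you fix $n$, push the length upward with Lemma \ref{lemma:Crr1} and Theorem \ref{thrm:Crr}, and drop to $n-1$ via Proposition \ref{prop:Crr} when stuck. Your same-$n$ moves, their side terms, and the termination argument via $\sum_i\lambda_i^2$ all check out. The problem is the stuck case.

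There you claim that every $\lambda^{i,j}$ produced by Proposition \ref{prop:Crr} ``stays in the class \ldots a legitimate boundary of size $n-1$.'' Deleting two parts preserves the multiplicity bound, but it does \emph{not} preserve containment in the staircase: if $\lambda_k=n-k$ and neither deleted part lies in positions $\le k$, then $\lambda^{i,j}_k=n-k>(n-1)-k$, so $\lambda^{i,j}\not\subseteq\delta_{n-2}$. This actually happens for stuck partitions, because stuckness only forces doubled values into consecutive runs, and nothing prevents the lower copy of a high run from sitting exactly on the staircase. Concretely, take $n=11$ and $\lambda=(8,8,7,7,2,2,1,1)$: it lies inside $\delta_{10}$, every value occurs at most twice, the defect is $3$, and it is stuck (appending a $1$, or spreading any of $(8,8)$, $(7,7)$, $(2,2)$, $(1,1)$, creates a third copy of $1$, $7$, $8$, $1$, $2$ respectively). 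Your procedure must therefore invoke Proposition \ref{prop:Crr}(2) (legal, since $\lambda_1=8\le n-2$), but the term with $(i,j)=(5,6)$, namely $q^{8}\,\mathfrak{C}^{10}\bigl((8,8,7,7,1,1)\bigr)$, is fatal: the partition $(8,8,7,7,1,1)$ has fourth part $7>6$, hence is not inside $\delta_{9}$, so it is not a boundary in the sense of Section \ref{sec:DCgb} at all; and its defect for size $10$ is $4$, so it is neither a target term nor covered by your induction hypothesis, which (like all the recurrences you cite) is scoped to partitions inside the staircase. To close this you would need either to extend the partition functions and all the recurrences to boundaries overflowing the staircase and prove the theorem in that larger class, or to replace the stuck-case move, e.g.\ by reading Lemma \ref{lemma:Crr1} right-to-left to strip a $1$ (whose side term $\lambda-\mathbf{1}_{l}$ \emph{does} stay inside the shrunk staircase) and then spreading --- but that raises the defect and breaks your termination order, so it requires a new argument. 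As written, the proof has a genuine gap at exactly the step you flagged as the main obstacle.
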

\begin{proof}
We prove the statement by induction on $n$ and $l$.
For $n=1$ and $n=2$, the statement is true since 
\begin{align*}
C^{2}(\emptyset)
=\genfrac{}{}{1pt}{}{1}{3}\left( C^{3}((2,1))+C^{2}((1))\right).
\end{align*} 
by Lemma \ref{lemma:Crr2}.

We assume that the statement is true up to all $n$ and $l$ such that 
$n-l=m$. 
In case of $n$, we have two cases: 1) $\lambda_1<n-1$, and  
2) $\lambda_1=n-1$.
We define $m(n,l):=n-l$. 

\paragraph{Case 1)} 
We apply Lemma \ref{lemma:Crr2} on $C^{n}(\lambda)$, which results 
in a linear combination of $C^{n+1}((n,n-1)\oplus\lambda)$ and 
$C^{n}((n-1)\oplus\lambda)$.
Note that the length of $(n,n-1)\oplus\lambda$ is $l+2$ and 
the length of $(n)\oplus\lambda$ is $l+1$.
In both cases, $m(n,l)$ is decreased by one after the application of 
Lemma \ref{lemma:Crr2}. 
By the induction assumption, these two terms can be expressed in terms of 
$C^{n'}(\mu)$ such that $n'-l(\mu)=1$ or $2$.
Thus, $C^{n}(\lambda)$ can be expressed in terms of $C^{n'}(\mu)$'s.

\paragraph{Case 2)}
Since $\lambda_1=n-1$, we apply Lemma \ref{lemma:Crr4} and to $\lambda_1$ in 
$C^{n}(\lambda)$.
By this operation, we may obtain a partition $\mu$ 
such that an integer appears twice in $\mu$.
Otherwise, we apply Lemma \ref{lemma:Crr2} as Case 1) and the statement 
is true by induction.
We consider the case where an integer appears twice in $\mu$.
We apply Theorem \ref{thrm:Crr} to $\mu$ until we have no repeated integers 
in $\mu$.
By construction, a partition $\mu$ satisfies $\mu_1\le n-2$.
We have two cases: a) $n'=n$ and b) $n'<n$.
\paragraph{Case 2a)}
We apply Lemma \ref{lemma:Crr2} to $C^{n'}(\mu)$ and obtain a partition
such that $m(n',l(\mu))$ is one less than $m(n,l(\lambda))$.
We continue these processes until $m(n',l(\mu))$ becomes $1$ or $2$.

\paragraph{Case 2b)} 
By induction assumption, $C^{n'}(\mu)$ is a linear combination of 
$C^{n''}(\mu')$ with $m(n'',l(\mu'))=0$ or $1$, since we have $n'<n$.

These observations complete the proof.
\end{proof}

\begin{remark}
For a general partition $\lambda$, the expression of $C^{n}(\lambda)$ in 
terms of $C^{n'}(\mu)$ may contain a term with $n'>n$.
For example, $C^{5}_{1}$ is expressed as 
\begin{align*}
C^{5}_1=\genfrac{}{}{1pt}{}{1}{30}C^{7}_{65431}
+\genfrac{}{}{1pt}{}{1}{6}C^{6}_{5431}
+\genfrac{}{}{1pt}{}{7}{30}C^{5}_{431}
+\genfrac{}{}{1pt}{}{1}{10}C^{4}_{31}.
\end{align*} 
\end{remark}

One can state a stronger statement than Theorem \ref{thrm:lc} in case of 
$\mu=\delta_{n-1}\setminus\{i\}$ for $1\le i\le n-1$.
\begin{cor}
Let $\lambda:=\delta_{n-1}\setminus\{i\}$ be a partition with $1\le i\le n-1$.
Then, the value $C^{n}_{\lambda}$ can be expressed as a linear 
combination of $C^{n'}_{\mu_{n'}}$ 
such that $\mu_{n'}=\delta_{n-1}\setminus\{i+1\}$ for $n'=n$
and $\mu_{n'}=\delta_{n-1}\setminus\{n-1,\ldots,n'\}$ for $n'\in[i+1,n-1]$.
\end{cor}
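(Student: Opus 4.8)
The plan is to prove the corollary by induction on $n$, viewing it as a sharpening of Theorem \ref{thrm:lc} in which one tracks exactly which partitions arise during the reduction. Throughout I would work with the partition functions $\mathfrak{C}$, the statement for $C^{n}_{\lambda}$ following by setting $q=1$. The starting observation is that $\lambda=\delta_{n-1}\setminus\{i\}$ has length $n-2$ and first part $n-1$, and can be written as $\lambda=(n-1,\dots,i+2)\oplus(i+1)\oplus(i-1,\dots,1)$, that is, as $\lambda'\oplus(m)\oplus\nu$ with $m=i+1$, $\lambda'=(n-1,\dots,i+2)$ and $\nu=(i-1,\dots,1)$. These satisfy the hypotheses $\min\lambda'\ge m+1$ and $\max\nu\le m-1$ of Lemma \ref{lemma:Crr4}, so the machinery of the previous subsection applies.

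First I would apply Lemma \ref{lemma:Crr4} with this choice of $(\lambda',m,\nu)$. This expresses $(1+q)\mathfrak{C}^{n}(\delta_{n-1}\setminus\{i\})$ as the sum of the designated level-$n$ term $\mathfrak{C}^{n}(\delta_{n-1}\setminus\{i+1\})$ (coming from $\lambda'\oplus(m-1)\oplus\nu$), a term $q\,\mathfrak{C}^{n}(\lambda'\oplus(m+1)\oplus\nu)$ in which the part $i+2$ is now repeated, and a single level-$(n-1)$ term $\mathfrak{C}^{n-1}(\delta_{n-2}\setminus\{i\})$ carrying the weight $q^{\alpha_0}$ prescribed by the lemma. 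The repeated-part term overflows the staircase $\delta_{n-1}$, so the next step is to remove it with Theorem \ref{thrm:Crr}: applying the three-term relation to the doubled part pushes the repetition one box upward and peels off one level-$(n-1)$ partition function $\mathfrak{C}^{n-1}(\lambda'-\mathbf{1}\oplus\nu)$-type term. Iterating, the repeated part climbs until it would reach the value $n$, at which point a part exceeds the width $n-1$ and the corresponding partition function vanishes, terminating the chain. In this way $q\,\mathfrak{C}^{n}(\lambda'\oplus(m+1)\oplus\nu)$ is rewritten entirely through level-$(n-1)$ partition functions.

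At this stage $\mathfrak{C}^{n}(\delta_{n-1}\setminus\{i\})$ is expressed through $\mathfrak{C}^{n}(\delta_{n-1}\setminus\{i+1\})$ together with a collection of level-$(n-1)$ functions of the form $\delta_{n-2}\setminus\{\dots\}$. I would then feed each of these through the induction hypothesis (the corollary at level $n-1$) and Theorem \ref{thrm:lc} to rewrite them as linear combinations of the staircases $C^{n'}(\delta_{n'-1})$, and finally divide by $(1+q)$, which is legitimate since the left-hand side is a genuine polynomial. The base cases $n\le 2$, and the boundary case $i=n-1$, where $\lambda=\delta_{n-2}$ has $\lambda_1=n-2$ and one must start instead from Lemma \ref{lemma:Crr2} exactly as in Case 1 of the proof of Theorem \ref{thrm:lc}, would be verified directly.

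The main obstacle is the bookkeeping in the two middle steps. One must show that the level-$(n-1)$ functions emitted both by the direct term $\delta_{n-2}\setminus\{i\}$ and by the climbing three-term reductions, once passed through the induction hypothesis, assemble precisely into the family $\{\delta_{n'-1}\}_{n'=i+1}^{n-1}$ with no residual non-staircase term surviving at level $n-1$; a priori the induction hypothesis reintroduces a single gap-$(i+1)$ term at level $n-1$, so the crux is proving that such residuals cancel against the contributions of the overflow chain. Making this cancellation transparent — pinning down the exact powers of $q$ and the multiplicities with which each $\delta_{n'-1}$ occurs — is the delicate point, and it is safest to carry out the whole argument at the level of $\mathfrak{C}$ rather than $C$, so that the $q$-weights enforce the cancellations and so that overflow terms are seen to vanish identically.
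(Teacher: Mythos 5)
Your opening moves are exactly the paper's: you decompose $\delta_{n-1}\setminus\{i\}=(n-1,\dots,i+2)\oplus(i+1)\oplus(i-1,\dots,1)$, apply Lemma \ref{lemma:Crr4} to get $\mathfrak{C}^{n}(\delta_{n-1}\setminus\{i+1\})$, a doubled-part term, and $q^{\alpha_0}\mathfrak{C}^{n-1}(\delta_{n-2}\setminus\{i\})$, and then climb the doubled part with Theorem \ref{thrm:Crr} until a part of size $n$ appears and that branch vanishes, each step emitting a term $\mathfrak{C}^{n-1}(\delta_{n-2}\setminus\{i,i+j\})$, $1\le j\le n-2-i$. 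This is precisely the paper's first stage (its proof invokes Lemma \ref{lemma:Crr4}, with the Theorem \ref{thrm:Crr} chain implicit there and explicit in the example computing $C^{8}_{765421}$), and this part of your proposal is sound.

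The genuine gap is your final step, and it originates in a misreading of the target family. You take the lower-level targets to be the staircases $C^{n'}(\delta_{n'-1})$, so you push every level-$(n-1)$ term --- including $\delta_{n-2}\setminus\{i\}$ --- through the induction hypothesis; this regenerates a gap-$(i+1)$ term $C^{n-1}(\delta_{n-2}\setminus\{i+1\})$, and your proof then rests on the claim that such residuals ``cancel against the overflow chain.'' You never prove this; you explicitly defer it as the crux, so the proposal is incomplete as a proof. Moreover the hoped-for cancellation is not borne out by the paper: in its example with $n=8$, $i=3$, the expansion terminates in $C^{7}_{65421}$, $C^{6}_{5421}$, $C^{5}_{421}$, i.e.\ in terms $C^{n'}(\delta_{n'-1}\setminus\{i\})$ with the gap at $i$ preserved and all coefficients positive --- no staircases and no cancellation. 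The family $\mu_{n'}=\delta_{n-1}\setminus\{n-1,\dots,n'\}$ in the statement is to be read with the part $i$ also deleted, i.e.\ $\mu_{n'}=\delta_{n'-1}\setminus\{i\}$, as both the example and the proof's phrase about appending $(n-1,n-2)$ or $(n-2)$ show. With that reading, the direct term $\mathfrak{C}^{n-1}(\delta_{n-2}\setminus\{i\})$ is already final and must not be expanded --- expanding it is what manufactures your spurious residual --- and the only terms still requiring work are the two-gap overflow residues $\delta_{n-2}\setminus\{i,i+j\}$, those with $n'-l(\mu)=3$. The paper disposes of these with Lemma \ref{lemma:Crr2}, which appends $(n'-1,n'-2)$ or $(n'-2)$, restores $n'-l(\mu)=2$, and lands back in the stated family (after first moving the second gap to the top by further applications of Lemma \ref{lemma:Crr4} and Theorem \ref{thrm:Crr}, as in the example's treatment of $C^{7}_{6521}$ and $C^{7}_{6421}$). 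Replacing your induction-plus-cancellation step by this Lemma \ref{lemma:Crr2} reduction, and dropping the appeal to Theorem \ref{thrm:lc}, is what closes the argument.
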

\begin{proof}
Given a pair of $(n,\lambda)$, we define $m(n,l(\lambda)):=n-l$.
When $\lambda=\delta_{n-1}\setminus\{i\}$, we have $m(n,l(\lambda))=2$.
We apply Lemma \ref{lemma:Crr4} to $C^{n}(\lambda)$, and obtain 
a linear combinations of $C^{n'}(\mu)$ such that 
$m(n',l(\mu))=2$ when $n'=n$ and $m(n',l(\mu))=2$ or $3$ when $n'<n$.
We apply Lemma \ref{lemma:Crr2} to $C^{n'}(\mu)$'s with $m(n',l(\mu))=3$ and 
$n'<n$.
Note that application of Lemma \ref{lemma:Crr2} decreases $m(n',l(\mu))$ by one
and may increase $n'$ by one.
Therefore, an expansion of $C^{n}(\lambda)$ 
in terms of $C^{n'}(\mu)$ is a linear combination
of $C^{n'}(\mu)$ such that $n'\le n$ and $m(n',l(\mu))=1$ or $2$.
A partition $\mu$ is written as $\mu=\delta_{n-1}\setminus\{n-1,\ldots,n'\}$ 
for some $n'\in[i+1,n-1]$ since we append $(n-1,n-2)$ or $(n-2)$ to $\mu$
when we apply Lemma \ref{lemma:Crr2} to $C^{n'}(\mu)$.
This completes the proof.
\end{proof}

\begin{example}
We compute $C^{8}_{765421}$ as follows.
We have 
\begin{align}
\label{ex:r1}
C^{8}_{765421}=\genfrac{}{}{1pt}{}{1}{2}
\left(C^{8}_{765321}+C^{7}_{6521}+C^{7}_{6421}+C^{7}_{5421}+C^{7}_{65421}\right),
\end{align}
by Lemma \ref{lemma:Crr4} and successive applications of Theorem \ref{thrm:Crr}.
We calculate the right hand side of Eqn. (\ref{ex:r1}) as
\begin{align}
C^{7}_{6521}&=
\genfrac{}{}{1pt}{}{1}{2}C^{7}_{6421}
+\genfrac{}{}{1pt}{}{1}{6}C^{7}_{6521}
+\genfrac{}{}{1pt}{}{2}{3}C^{6}_{521},\\
C^{7}_{6421}&=
\genfrac{}{}{1pt}{}{1}{6}\left( 
C^{8}_{765421}+2C^{7}_{65421}+C^{6}_{5421}
\right), \\
C^{7}_{5421}&=\genfrac{}{}{1pt}{}{1}{3}\left(
C^{8}_{765421}+C^{7}_{65421}
\right), \\
C^{6}_{521}&=
\genfrac{}{}{1pt}{}{1}{6}\left(
C^{7}_{65421}+2C^{6}_{5421}+C^{5}_{421}
\right).
\end{align}
Substituting these expressions into Eqn. (\ref{ex:r1}),
we obtain 
\begin{align*}
C^{8}_{765421}
=\genfrac{}{}{1pt}{}{5}{7}C^{8}_{765321}
+\genfrac{}{}{1pt}{}{10}{7}C^{7}_{65421}
+\genfrac{}{}{1pt}{}{8}{21}C^{6}_{5421}
+\genfrac{}{}{1pt}{}{2}{21}C^{5}_{421}.
\end{align*}
\end{example}

\bibliographystyle{amsplainhyper} 
\bibliography{biblio}

\providecommand{\bysame}{\leavevmode\hbox to3em{\hrulefill}\thinspace}
\begin{thebibliography}{10}

\bibitem{BarDum79}
D.~Barsky and D.~Dumont, \emph{{C}ongruences pour les nombres de {G}enocchi de
  2e esp\`ece}, Groupe de Travail d'Analyse Ultram\'etrique \textbf{7--8}
  (1979-1981), Expos\'e no. 34.

\bibitem{Big14}
A.~Bigeni, \emph{{C}ombinatorial study of the {D}ellac configurations and the
  $q$-extended normalized median {G}enocchi numbers}, Electron. J. Combin.
  \textbf{21} (2014), P2.32,
  \href{http://arxiv.org/abs/1402.1827}{\path{arXiv:1402.1827}}.

\bibitem{Big17}
\bysame, \emph{{E}numerating the symplectic {D}ellac configurations}, preprint
  (2017), \href{http://arxiv.org/abs/1705.03804}{\path{arXiv:1705.03804}}.

\bibitem{Big18}
\bysame, \emph{{C}ombinatorial interpretations of the {K}reweras triangle in
  terms of subset tuples}, Electron. J. Combin. \textbf{25} (2018), P4.44,
  \href{http://arxiv.org/abs/1712.01929}{\path{arXiv:1712.01929}}.

\bibitem{BigFei19}
A.~Bigeni and E.~Feigin, \emph{{S}ymmetric {D}ellac configurations and
  symplectic/orthogonal flag varieties}, Linear Algebra its Appl. \textbf{573}
  (2019), 54--79,
  \href{http://arxiv.org/abs/1804.10804}{\path{arXiv:1804.10804}}.

\bibitem{BigFei20}
\bysame, \emph{{S}ymmetric {D}ellac configurations}, J. Integer Seq.
  \textbf{23} (2020), Article 20.4.6,
  \href{http://arxiv.org/abs/1808.04275}{\path{arXiv:1808.04275}}.

\bibitem{Del00}
H.~Dellac, \emph{Problem 1735}, {L'I}nterm\'ediaire des Math\'ematiciens
  \textbf{7} (1900), 9--10.

\bibitem{Dum74}
D.~Dumont, \emph{{I}nterpr\'etations combinatoires des nombres de {G}enocchi},
  Duke Math. J. \textbf{41} (1974), 305--318.

\bibitem{DumRan94}
D.~Dumont and A.~Randrianarivony, \emph{{D}\'erangements et nombres de
  {G}enocchi}, Discrete Math. \textbf{132} (1994), 37--49.

\bibitem{DumZen94}
D.~Dumont and J.~Zeng, \emph{{F}urther results on {E}uler and {G}enocchi
  numbers}, Aequationes Math. \textbf{47} (1994), 31--42.

\bibitem{FanFou15}
X.~Fang and G.~Fourier, \emph{{T}orus fixed points in {S}chubert varieties and
  normalized median {G}enocchi numbers}, S\'em. Lothar. Combin. \textbf{75}
  (2015), Art. B75f,
  \href{http://arxiv.org/abs/1504.03980}{\path{arXiv:1504.03980}}.

\bibitem{Fei11}
E.~Feigin, \emph{{D}egenerate flag varieties and the median {G}enocchi
  numbers}, Math. Res. Lett. \textbf{18} (2011), 1163--1178,
  \href{http://arxiv.org/abs/1101.1898}{\path{arXiv:1101.1898}}.

\bibitem{Fei12a}
\bysame, \emph{$\mathbb{G}^{M}_{a}$ degeneration of flag varieties}, Selecta
  Math. \textbf{18} (2012), 513--537,
  \href{http://arxiv.org/abs/1007.0646}{\path{arXiv:1007.0646}}.

\bibitem{Fei12b}
\bysame, \emph{{T}he median {G}enocchi numbers, $q$-analogues and continued
  fractions}, European J. Combin. \textbf{33} (2012), 1913--1918,
  \href{http://arxiv.org/abs/1111.0740}{\path{arXiv:1111.0740}}.

\bibitem{FeiFinLit14}
E.~Feigin, M.~Finkelberg, and P.~Littlemann, \emph{{S}ymplectic degenerate flag
  varieties}, Canad. J. Math. \textbf{66} (2014), 1250--1286,
  \href{http://arxiv.org/abs/1106.1399}{\path{arXiv:1106.1399}}.

\bibitem{HZ99a}
G.-N. Han and J.~Zeng, \emph{{O}n a $q$-sequence that generalizes the median
  {G}enocchi numbers}, Ann. Sci. Math. Qu\'ebec \textbf{23} (1999), no.~1,
  63--72.

\bibitem{HZ99b}
G.-N. Han and J~Zeng, \emph{q-{P}olyn\^omes de {G}andhi et statistique de
  {D}enert}, Discrete Math. \textbf{205} (1999), 119--143.

\bibitem{Kre97}
G.~Kreweras, \emph{{S}ur les permutations compt\'ees par les nombres de
  {G}enocchi de $1$-i\`ere et $2$-i\`eme esp\`ece}, European J. Combin.
  \textbf{18} (1997), 49--58.

\bibitem{ZenZho06}
J.~Zeng and J.~Zhou, \emph{{A} $q$-analog of the {S}eidel generation of
  {G}enocchi numbers}, European J. Combin. \textbf{27} (2006), 364--381,
  \href{http://arxiv.org/abs/math/0501217}{\path{arXiv:math/0501217}}.

\end{thebibliography}

\end{document}